\def\version{February 28, 2013}

\documentclass[12pt]{article}

\usepackage[psamsfonts]{amsfonts}
\usepackage{amsmath,amssymb,amsthm}
\usepackage{hyperref}
\usepackage{graphicx}
\usepackage{caption}
\usepackage{subfig}
\usepackage{bbm}

\newcommand\ecu[2]{\begin{equation}\label{#1} #2 \end{equation}}
\newcommand\ecug[1]{\begin{equation*} #1 \end{equation*}}
\newcommand\ecus[2]{\begin{align}\begin{split}\label{#1} #2 \end{split}\end{align}}
\newcommand\ecusg[1]{\begin{align*} #1 \end{align*}}

\newcommand{\ob}[1]{\ensuremath{\left( #1 \right)}}
\newcommand{\sqb}[1]{\ensuremath{\left[ #1 \right]}}
\newcommand{\abs}[1]{\ensuremath{\left| #1 \right|}}
\newcommand{\norma}[1]{\ensuremath{\left|\left| #1 \right|\right|}}
\newcommand{\set}[1]{\ensuremath{\left\{ #1 \right\}}}

\newcommand{\C}{\ensuremath{\mathbb{C}}}
\newcommand{\R}{\ensuremath{\mathbb{R}}}
\newcommand{\Z}{\ensuremath{\mathbb{Z}}}
\newcommand{\Zd}{\ensuremath{\Z^d}}

\newcommand{\1}{\mathbbm{1}}
\newcommand{\D}{\mathrm{d}}

\newcommand{\flechas}{\ensuremath{\leftrightarrow }}

\newcommand{\setE}{\ensuremath{\mathcal E}}

\newcommand{\U}{\ensuremath{\mathcal{U}}}
\newcommand{\V}{\ensuremath{\mathcal{V}}}
\newcommand{\ee}{\ensuremath{\mathrm{e}}}

\newcommand{\z}[1]
{
  \ifnum#1=1
      \ensuremath{z_c^{(t)}}        
  \fi
  \ifnum#1=2
      \ensuremath{z_c^{(a)}}        
  \fi
  \ifnum#1=3
      \ensuremath{z_c}          
  \fi
  \ifnum#1=4
      \ensuremath{z}            
  \fi
}

\newcommand{\g}[1]
{
  \ifnum#1=1
      \ensuremath{g^{(t)}(\z{1})}       
  \fi
  \ifnum#1=2
      \ensuremath{g^{(a)}(\z{2})}       
  \fi
  \ifnum#1=3
      \ensuremath{g(\z{3})}             
  \fi
  \ifnum#1=4
      \ensuremath{g^{(t)}(\z{4})}       
  \fi
  \ifnum#1=5
      \ensuremath{g^{(a)}(\z{4})}       
  \fi
  \ifnum#1=6
      \ensuremath{g(\z{4})}             
  \fi
  \ifnum#1=7
      \ensuremath{g^{(t)}(z_0)}         
  \fi
}

\newcommand{\h}[1]
{
  \ifnum#1=1
      \ensuremath{h_{\z{2}}^{(a)}}      
  \fi
  \ifnum#1=2
      \ensuremath{h_{\z{4}}^{(a)}}      
  \fi
  \ifnum#1=3
      \ensuremath{h_{\z{2}}^{(a)}}      
  \fi
  \ifnum#1=4
      \ensuremath{h_{\z{4}}^{(a)}}      
  \fi
}

\newcommand{\HH}[1]
{
  \ifnum#1=1
      \ensuremath{H_{\z{2}}^{(a)}}      
  \fi
  \ifnum#1=2
      \ensuremath{H_{\z{4}}^{(a)}}      
  \fi
  \ifnum#1=3
      \ensuremath{H_{\z{2}}^{(a)}}      
  \fi
  \ifnum#1=4
      \ensuremath{H_{\z{4}}^{(a)}}      
  \fi
}

\newcommand{\GG}[1]
{
  \ifnum#1=1
      \ensuremath{G^{(t)}_{\z{1}}}      
  \fi
  \ifnum#1=2
      \ensuremath{G^{(a)}_{\z{2}}}      
  \fi
  \ifnum#1=3
      \ensuremath{G_{\z{3}}}            
  \fi
  \ifnum#1=4
      \ensuremath{G^{(t)}_{\z{4}}}      
  \fi
  \ifnum#1=5
      \ensuremath{G^{(a)}_{\z{4}}}      
  \fi
  \ifnum#1=6
      \ensuremath{G_{\z{4}}}            
  \fi
}

\newcommand{\FG}[1]
{
  \ifnum#1=1
      \ensuremath{\hat G^{(t)}_{\z{1}}} 
  \fi
  \ifnum#1=2
      \ensuremath{\hat G^{(a)}_{\z{2}}} 
  \fi
  \ifnum#1=3
      \ensuremath{\hat G_{\z{3}}}   
  \fi
  \ifnum#1=4
      \ensuremath{\hat G^{(t)}_{\z{4}}} 
  \fi
  \ifnum#1=5
      \ensuremath{\hat G^{(a)}_{\z{4}}} 
  \fi
  \ifnum#1=6
      \ensuremath{\hat G_{\z{4}}}   
  \fi
}

\newcommand{\Gk}[2]
{
  \ifnum#1=1
      \ensuremath{ G^{(t,#2)}_{\z{1}}}      
  \fi
  \ifnum#1=2
      \ensuremath{ G^{(a,#2)}_{\z{2}}}      
  \fi
  \ifnum#1=3
      \ensuremath{ G^{(#2)}_{\z{3}}}        
  \fi
  \ifnum#1=4
      \ensuremath{ G^{(t,#2)}_{\z{4}}}      
  \fi
  \ifnum#1=5
      \ensuremath{ G^{(a,#2)}_{\z{4}}}      
  \fi
  \ifnum#1=6
      \ensuremath{ G^{(#2)}_{\z{4}}}        
  \fi
}

\newcommand{\FGk}[2]
{
  \ifnum#1=1
      \ensuremath{\hat G^{(t,#2)}_{\z{1}}}      
  \fi
  \ifnum#1=2
      \ensuremath{ \hat G^{(a,#2)}_{\z{2}}}     
  \fi
  \ifnum#1=3
      \ensuremath{\hat G^{(#2)}_{\z{3}}}        
  \fi
  \ifnum#1=4
      \ensuremath{\hat G^{(t,#2)}_{\z{4}}}      
  \fi
  \ifnum#1=5
      \ensuremath{\hat G^{(a,#2)}_{\z{4}}}      
  \fi
  \ifnum#1=6
      \ensuremath{\hat G^{(#2)}_{\z{4}}}        
  \fi
}

\newcommand{\SQ}[3]
{
  \ifnum#1=1    
      \ensuremath{ S^{(#2,#3)}_{\z{1}}}     
  \fi
  \ifnum#1=2    
      \ensuremath{ S^{(#2,#3)}_{\z{4}}}     
  \fi
  \ifnum#1=3    
      \ensuremath{ S^{(#2,#3)}_{\z{2}}}     
  \fi
  \ifnum#1=4    
      \ensuremath{ S^{(#2,#3)}_{\z{3}}}     
  \fi
}

\newcommand{\PI}[1]
{
  \ifnum#1=1
      \ensuremath{\Pi^{(t)}_{\z{1}}}        
  \fi
  \ifnum#1=2
      \ensuremath{ \Pi^{(a)}_{\z{2}}}       
  \fi
  \ifnum#1=3
      \ensuremath{\Pi_{\z{3}}}          
  \fi
  \ifnum#1=4
      \ensuremath{\Pi^{(t)}_{\z{4}}}        
  \fi
  \ifnum#1=5
      \ensuremath{\Pi^{(a)}_{\z{4}}}        
  \fi
  \ifnum#1=6
      \ensuremath{\Pi_{\z{4}}}          
  \fi
  \ifnum#1=7
      \ensuremath{\tilde \Pi^{(a)}_{\z{4}}} 
  \fi
}

\newcommand{\FPI}[1]
{
  \ifnum#1=1
      \ensuremath{\hat \Pi^{(t)}_{\z{1}}}       
  \fi
  \ifnum#1=2
      \ensuremath{ \hat \Pi^{(a)}_{\z{2}}}      
  \fi
  \ifnum#1=3
      \ensuremath{\hat \Pi_{\z{3}}}             
  \fi
  \ifnum#1=4
      \ensuremath{\hat \Pi^{(t)}_{\z{4}}}       
  \fi
  \ifnum#1=5
      \ensuremath{\hat\Pi^{(a)}_{\z{4}}}        
  \fi
  \ifnum#1=6
      \ensuremath{\hat \Pi_{\z{4}}}         
  \fi
}

\newcommand{\PIN}[2]
{
  \ifnum#1=1
      \ensuremath{\Pi^{(t,#2)}_{\z{1}}}     
  \fi
  \ifnum#1=2
      \ensuremath{ \Pi^{(a,#2)}_{\z{2}}}    
  \fi
  \ifnum#1=3
      \ensuremath{\Pi_{\z{3}}}          
  \fi
  \ifnum#1=4
      \ensuremath{\Pi^{(t,#2)}_{\z{4}}}     
  \fi
  \ifnum#1=5
      \ensuremath{\Pi^{(a,#2)}_{\z{4}}}     
  \fi
  \ifnum#1=6
      \ensuremath{\Pi_{\z{4}}}          
  \fi
}

\newcommand{\FPIN}[2]
{
  \ifnum#1=1
      \ensuremath{\hat \Pi^{(t,#2)}_{\z{1}}}        
  \fi
  \ifnum#1=2
      \ensuremath{\hat \Pi^{(a,#2)}_{\z{2}}}    
  \fi
  \ifnum#1=3
      \ensuremath{\hat \Pi_{\z{3}}}             
  \fi
  \ifnum#1=4
      \ensuremath{\hat \Pi^{(t,#2)}_{\z{4}}}        
  \fi
  \ifnum#1=5
      \ensuremath{\hat \Pi^{(a,#2)}_{\z{4}}}        
  \fi
  \ifnum#1=6
      \ensuremath{\hat \Pi_{\z{4}}}         
  \fi
}

\oddsidemargin -8mm \evensidemargin -8mm \topmargin -4mm
\textheight 630pt \textwidth 500pt

\numberwithin{equation}{section}
\theoremstyle{plain}
\newtheorem{theorem}{Theorem}[section]

\newtheorem{lemma}[theorem]{Lemma}

\begin{document}

\title{
Expansion in high dimension for the
\\
growth constants of lattice trees and lattice animals
}

\author{
Yuri Mej\'ia Miranda\thanks{
Department of Mathematics, University of British Columbia,
Vancouver, BC, Canada V6T 1Z2.  Email: {\tt amie.yuri@gmail.com},
{\tt slade@math.ubc.ca}}
\,
and
Gordon ${\rm Slade}^*$
}

\date{\version}

\maketitle

\begin{abstract}
We compute the first three terms of the $1/d$ expansions
for the growth constants and one-point functions
of nearest-neighbour lattice trees
and lattice (bond) animals on the integer lattice $\Zd$,
with rigorous error estimates.
The proof uses the lace expansion, together with a new expansion
for the one-point functions based on inclusion-exclusion.
\end{abstract}

\noindent AMS 2010 Subject Classification:  60K35, 82B41

\section{Main result}
For $d \ge 1$, we consider the integer lattice $\Zd$ as a regular
graph of degree $2d$, with edges consisting of the nearest-neighbour
bonds $\{x,y\}$ with $\|x-y\|_1=1$.
A \emph{lattice animal} is a finite connected subgraph,
and a \emph{lattice tree} is a lattice animal without cycles.
These are fundamental objects in combinatorics and in the
theory of branched polymers
\cite{Jans00}.

We denote the number of lattice animals containing $n$ bonds
and containing the origin of $\Zd$ by $a_n$, and the number
of lattice trees containing $n$ bonds
and containing the origin of $\Zd$ by $t_n$.
Standard subadditivity arguments \cite{Klar67,Klei81} provide the existence
of the $d$-dependent \emph{growth constants} (which we
express in the notation of \cite{GP00})
\begin{equation}
    \lambda_0 = \lim_{n \to \infty} t_n^{1/n},
    \quad\quad
    \lambda_b = \lim_{n \to \infty} a_n^{1/n}.
\end{equation}
A deeper analysis shows that $\lambda_0 = \lim_{n \to \infty} t_{n+1}/t_{n}$
and $\lambda_b = \lim_{n \to \infty} a_{n+1}/a_{n}$ \cite{Madr99}.
The \emph{one-point functions} are the generating functions of
the sequences $a_n$ and $t_n$, namely
\begin{equation}
    g^{(t)}(z)=\sum_{n=0}^\infty t_nz^n \quad \text{and} \quad
    g^{(a)}(z)=\sum_{n=0}^\infty a_nz^n.
\end{equation}
These have radii of convergence $z_c^{(t)}=\lambda_0^{-1}$ and
$z_c^{(a)}=\lambda_b^{-1}$, respectively.
We refer to $z_c^{(t)}$ and
$z_c^{(a)}$ as the \emph{critical points}.
We use superscripts to differentiate between lattice trees and
lattice animals, and we write $z_c$ or $g(z)$ below for
statements that apply to both models.  We use the abbreviation
\begin{equation}
    g_c = g(z_c).
\end{equation}
Also, to make statements simultaneously for lattice trees and
lattice animals, we use the indicator function $\1_{\rm a}$
which takes the value $1$ for the case of lattice animals,
and the value $0$ for the case of lattice trees.

Our main result is the following theorem, which gives detailed
information on the asymptotic behaviour of the critical points
and critical one-point functions as $d \to \infty$.  The notation
$f(d)=o(h(d))$ means $\lim_{d \to \infty} f(d)/h(d)=0$.

\begin{theorem}
\label{thm:1}
For lattice trees or lattice animals, as $d\to\infty$,
\begin{align}
\label{e:zca3}
    z_c    &=\ee^{-1}\left[ \frac{1}{2d}+\frac{\frac{3}{2}}{(2d)^2}
    +\frac{\frac{115}{24} - \1_{\rm a}\frac1{2}\ee^{-1} }{(2d)^3} \right] + o(2d)^{-3},
    \\
\label{e:ga}
    g_c   &= \ee
    \left[ 1 +\frac{\frac{3}{2}}{2d}
    +\frac{ \frac{263}{24} - \1_{\rm a} \ee^{-1} }{(2d)^2}\right]
    +o(2d)^{-2}.
\end{align}
\end{theorem}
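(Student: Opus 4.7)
The strategy combines two expansions that together yield a closed system of equations for $z_c$ and $g_c$. First, I would apply the lace expansion to the two-point function $G_z(x)$, the generating function for trees or animals containing both $0$ and $x$, obtaining a convolution identity of the schematic form
\[
G_z(x) = \delta_{0,x} + \Pi_z(x) + z\sum_{y : \|y\|_1 = 1}\bigl[G_z(x-y) + (\Pi_z * G_z)(x-y)\bigr],
\]
where $\Pi_z$ is a lace-expansion coefficient expressible as a sum over laced diagrams. Summing over $x$ and evaluating at $z = z_c$, where the susceptibility $\sum_x G_z(x)$ diverges, produces an implicit scalar identity relating $z_c$ to the diagrammatic series $\hat\Pi_{z_c}(0)$.

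Second, since the lace expansion alone does not determine $g_c$, the new inclusion--exclusion expansion advertised in the abstract is required. The natural route is to decompose a tree or animal rooted at $0$ according to which subset of the $2d$ neighbours of $0$ it visits, and then use inclusion--exclusion to correct for the possibility that the subclusters hanging off distinct neighbours share vertices. This produces a second implicit equation of the form $g_c = F(z_c, g_c, \text{diagrams})$, in which the diagrams encode corrections coming from short cycles near the origin.

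With these two implicit equations in hand, the asymptotic expansions follow by iteration. At leading order the system reduces to the critical Poisson Galton--Watson conditions $2d\,z_c\,g_c = 1$ and $g_c = \ee$, recovering the mean-field values $z_c \sim \ee^{-1}/(2d)$ and $g_c \sim \ee$. The successive corrections arise by expanding each diagrammatic quantity in powers of $(2d)^{-1}$: each convolution with the nearest-neighbour kernel contributes a factor $(2d)^{-1}$ and each closed loop contributes a simple random walk return probability whose $1/(2d)$ expansion is classical. The $\1_{\rm a}$-dependent contributions at the highest displayed order arise from the first diagrams in which a single extra bond closes a short cycle, a configuration available to animals but forbidden to trees; the mean-field tree factor $\ee^{-1}$ multiplying such a bond accounts for the $\1_{\rm a}\ee^{-1}$ terms.

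The main obstacle will be the careful enumeration and bounding of the diagrammatic contributions to three orders in $1/(2d)$. One must identify precisely which small diagrams contribute at each order, collect their explicit rational contributions to produce the constants $\tfrac{115}{24}$ and $\tfrac{263}{24}$, and verify that the remainder terms in both the lace expansion and in the new inclusion--exclusion expansion are genuinely $o((2d)^{-3})$. The last point in particular requires uniform diagrammatic bounds at the critical point, of the kind that typically demand an \emph{a priori} infrared bound on $G_{z_c}$. The combinatorial bookkeeping of small diagrams and the coordination of error estimates across the two expansions constitute the principal technical challenge.
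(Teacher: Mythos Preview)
Your proposal is correct and follows essentially the same route as the paper: the lace expansion gives the critical identity $2dz_c(g_c+\hat\Pi_{z_c})=1$, a new inclusion--exclusion expansion for $g_c$ (decomposing a cluster at the origin into planted subclusters through each neighbour and correcting for their mutual intersections) supplies the second equation, and the two are iterated from the mean-field starting point using diagrammatic error bounds driven by the infrared bound on $\hat G_{z_c}$. The paper organises the iteration through the auxiliary planted generating function $r_c=z_cg_c-z_cG_{z_c}(s)$ and the quantity $G_{z_c}(s)$, which you do not mention explicitly, but this is a matter of bookkeeping rather than a different idea.
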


Theorem~\ref{thm:1} extends our results in \cite{MS11},
where it was proved that, for both models,
\begin{equation}
\label{e:zcleading}
    z_c = \frac{1}{2d\ee} + o(2d)^{-1},
    \quad\quad\quad g_c = \ee + o(1).
\end{equation}
The leading terms \eqref{e:zcleading} were obtained in \cite{MS11} from
the lace expansion results of \cite{Hara08,HS90b}, together with a
comparison with the mean-field model studied in \cite{BCHS99}.
Our proof of Theorem~\ref{thm:1} provides a different and
self-contained proof of
the asymptotic behaviour of the leading terms, as part of a systematic
development of further terms.

The
lattice trees and lattice animals we are considering are
\emph{bond} clusters. For the closely related models of
\emph{site} trees  and \emph{site} animals, it was proved in
\cite{AB12} and \cite{BBR10} respectively, using very different
methods than ours, that the corresponding growth constants
$\Lambda_0$ and $\Lambda_s$ (in the notation of \cite{GP00}) are
both asymptotic to $2d\ee$ as $d \to \infty$. For related results
for spread-out models of lattice trees and lattice animals, see
\cite{Penr94,MS11}.

The behaviour of $z_c^{(t)}$ and $z_c^{(a)}$
as $d \to \infty$ has been extensively studied in the physics
literature.   For lattice trees, the expansion
\begin{align}
\label{e:zctpredicted}
    z_c^{(t)}   &=\ee^{-1}
    \left[ \frac{1}{2d} + \frac{\frac{3}{2}}{(2d)^2}
    + \frac{\frac{115}{24}}{(2d)^3} + \frac{\frac{309}{16}}{(2d)^4}
    + \frac{\frac{619103}{5760}}{(2d)^5}
    + \frac{\frac{543967}{768}}{(2d)^6} \right]   + \dots.
\end{align}
is equivalent to the expansion given in \cite{GP00}
for $\lambda_0$, but in \cite{GP00} no rigorous estimate for the
error term is obtained.
Similarly, the series
\begin{align}
\label{e:zcapredicted}
    z_c^{(a)}   &=
    \ee^{-1}
    \left[ \frac{1}{2d}+\frac{\frac{3}{2}}{(2d)^2}
    +\frac{\frac{115}{24} - \frac1{2}\ee^{-1} }{(2d)^3}
    + \frac{\frac{309}{16} -2\ee^{-1}}{(2d)^4}
    + \frac{\frac{619103}{5760}- \frac{113}{12}\ee^{-1}}{(2d)^5}
    \right.
    \nonumber \\ & \hspace{70mm} \left.
    + \frac
    {\frac{543967}{768} - \frac{395}{12}\ee^{-1} - \frac{55}{24}\ee^{-2}}
    {(2d)^6}
    \right]   + \dots.
\end{align}
is equivalent to the result of \cite{Harr82,PG95} for $\lambda_b$,
but again no rigorous error estimate was obtained in \cite{Harr82,PG95}.
Equation~\eqref{e:zca3} provides rigorous confirmation of
the first three terms in \eqref{e:zctpredicted}--\eqref{e:zcapredicted},
using completely different methods than \cite{GP00,Harr82,PG95}.

The formulas \eqref{e:zca3}--\eqref{e:zcapredicted} are examples
of $1/d$ expansions.  Such expansions have a long history and
have been developed for several models, in particular for
self-avoiding walk and percolation.
Let $c_n$ denote the number of $n$-step self-avoiding walks starting
at the origin.
For nearest-neighbour self-avoiding walk on $\Zd$, it was proved in
\cite{HS95} that the inverse connective constant
$z_c^{(s)}=[\lim_{n\to\infty} c_n^{1/n}]^{-1}$
has an asymptotic expansion $z_c^{(s)} \sim \sum_{i=1}^\infty m_i (2d)^{-i}$
to all orders,
with all coefficients $m_i$ \emph{integers}.
The first six coefficients had been computed much earlier,
in \cite{FG64}, but without
rigorous control of the error, and these six values were confirmed
with rigorous error estimate in \cite{HS95}.  Subsequently,
seven additional coefficients in the
expansion were computed in \cite{CLS07}.
The values of $m_i$ for $i \le 11$ are positive, whereas $m_{12}$
and $m_{13}$ are negative.
It appears likely that
the series $\sum_i m_i x^i$ has radius of convergence equal
to zero.  It may however be Borel summable, and a partial result in
this direction is given in \cite{Grah10}.  Some related results for
nearest-neighbour bond percolation
on $\Zd$ are obtained in \cite{HS95,HS05,HS06}.
In particular, it is shown in \cite{HS05} that the critical probability
$p_c=p_c(d)$ has an asymptotic expansion $p_c \sim \sum_{i=1}^\infty
q_i (2d)^{-1}$ to all orders, with all $q_i$ \emph{rational}.
The values of $q_1,q_2,q_3$ are computed in \cite{HS95,HS06},
and $q_i$ is given for $i \le 5$ in \cite{GR78} but without
rigorous error estimate.
Results for \emph{spread-out} models of percolation and self-avoiding
walk can be found in \cite{HS05y,Penr93,Penr94}.

An interesting problem which we do not solve in this paper is
to prove existence of asymptotic expansions to all orders for
$z_c^{(t)}$ and $z_c^{(a)}$; we believe that the methods
we develop would be useful for approaching this problem.
An existence proof would then open up the additional problems of
proving that the series have zero radius of convergence but are
Borel summable---the latter problems seem considerably more difficult
than the existence problem.
Also, both the formula \eqref{e:zctpredicted} and the insights in our
proof strongly suggest that there exists an
asymptotic expansion $z_c^{(t)} \sim \ee^{-1} \sum_{i=-1}^\infty r_i(2d)^{-i}$,
with $r_i$ \emph{rational}, but we do not prove this either.
The formula \eqref{e:zca3} does prove that
the coefficients for $z_c^{(a)}$ are not all rational multiples of
$\ee^{-1}$, as was already apparent from the nonrigorous formula
\eqref{e:zcapredicted}.  In our proof, the appearance of the
term $-\frac 12 \ee^{-1}$ in \eqref{e:zca3}
arises due to the contribution from
animals in which the origin lies in a cycle of length $4$, which
of course cannot occur in a lattice tree.
It is in this way that the strict inequality
$z_c^{(a)}<z_c^{(t)}$ \cite{GPSW94} (equivalently $\lambda_0 < \lambda_b$)
first manifests itself in the $1/d$ expansions.

Much has been proved about lattice trees and lattice animals
above the upper critical dimension $d_c=8$, using the lace expansion.
The lace expansion was first adapted to lattice trees and lattice
animals in \cite{HS90b}.  For sufficiently high dimensions,
it has been proved that $t_n \sim A \lambda_0^n n^{-3/2}$
and that the length scale of an $n$-bond lattice tree is typically
of order
$n^{1/4}$ \cite{HS92c}.  Much stronger results relate the scaling
limit of high-dimensional lattice trees to super-Brownian motion
\cite{DS98,Holm08,Slad06}.

The proof of Theorem~\ref{thm:1} relies heavily on the lace expansions
for lattice trees and lattice animals,
and in particular on estimates of \cite{Hara08,HS90b}.
The lace expansions are expansions for the \emph{two-point functions}
\begin{equation}
    G_{z}^{(t)}(x) = \sum_{n=0}^\infty t_n(x) z^n,
    \quad\quad\quad
    G_{z}^{(a)}(x) = \sum_{n=0}^\infty a_n(x) z^n,
\end{equation}
where $t_n(x)$ and $a_n(x)$ respectively denote
the number of $n$-bond lattice trees and
$n$-bond lattice animals containing the
two points $0,x \in \Zd$.
Equivalently,
\begin{equation}
\label{e:2ptfcndef}
     G_{z}(x) = \sum_{C \ni 0,x} z^{|C|},
\end{equation}
where the sum is over lattice trees or lattice animals containing
$0,x$, according to which model is considered, and where $|C|$ denotes
the number of bonds in $C$.

To prove Theorem~\ref{thm:1}, it is not enough just to have an
expansion for the two-point function: an expansion for the
one-point function is needed as well.  This is a
difficulty for lattice trees and lattice animals that does
not occur for self-avoiding walk or percolation.
In this paper, we develop a
new expansion for the one-point function, based on inclusion-exclusion.

The lace expansion and the expansion we present here for the one-point
function have been developed so far only in the context of bond
trees and bond animals.  To apply our approach
to related models, such as site animals or site trees, it would be necessary
to extend the expansions to these models, and also to
extend the estimates of Section~\ref{sec:Fourier} below to these models.

Theorem~\ref{thm:1} first appeared in
the PhD thesis \cite{Meji12}; the proof here has been
reorganised and simplified.

\section{Recursive structure of the proof}
\label{sec:pfstructure}

The \emph{susceptibility} $\chi$ is defined, for lattice trees
or lattice animals, by
\begin{equation}
    \chi(z)= \sum_{x \in \Zd} G_z(x) .
\end{equation}
For $z \in [0,z_c]$,
the lace expansion of \cite{HS90b} expresses $\chi$ in terms of
another function $\hat\Pi_z$ (discussed below in
Section~\ref{sec:LE}) via
\begin{equation}
\label{e:LE}
    \chi(z) =   \frac{g(z) +\hat\Pi_z}{1-2dz(g(z) +\hat\Pi_z)}.
\end{equation}
For $d$ sufficiently large,
the susceptibility has been proven to diverge
at $z_c$ \cite{Hara08,HS90b}, and this is
reflected by the vanishing of the denominator of the right-hand
side of \eqref{e:LE} when $z=z_c$ (see \cite[(1.30)]{Hara08}), namely
\begin{equation}
\label{e:zc}
    1-2dz_c(g_c+ \hat\Pi_{z_c})=0.
\end{equation}
We rewrite \eqref{e:zc} as
\begin{equation}
\label{e:zc-bis}
    z_c  = \frac1{2d}\frac1{g_c+\hat\Pi_{z_c}},
\end{equation}
which expresses $z_c$ in terms of $g_c$ and $\hat\Pi_{z_c}$.

Our main tool in obtaining rigorous error estimates is stated
in Lemma~\ref{lem:DmGn} below.  This lemma applies the infrared bound
of \cite{HS90b}, which is a bound on the Fourier transform of the two-point
function, to obtain estimates on certain convolutions of the two-point function.
Using Lemma~\ref{lem:DmGn}, we prove the following expansions for $G_{z_c}(s)$
and for $\hat\Pi_{z_c}$, where $s\in \Zd$ is a neighbour of the origin.
Recall that $\1_{\rm a}$ equals $1$ for lattice animals and equals
$0$ for lattice trees.

\begin{theorem}\label{thm:G_2}
Let $s\in \Zd$ be a neighbour of the origin.
For lattice trees or lattice animals,
\ecu{e:G0x_2}{
    \GG{3}(s)
    = \ee
    \left[ \frac{1}{2d} + \frac{\frac72}{(2d)^2}\right] + o (2d)^{-2} .
}
\end{theorem}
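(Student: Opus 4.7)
My plan is to expand $G_z(s)$ by combinatorial path decomposition, estimate the various pieces via Lemma~\ref{lem:DmGn}, and then evaluate at $z = z_c$ using the lace expansion identity \eqref{e:zc-bis}. Every lattice tree or animal $C$ with $0, s \in C$ contains a path from $0$ to $s$ (unique for trees; handled by inclusion-exclusion on shortest paths for animals), and because $0$ and $s$ are nearest neighbours with no common neighbour in $\Zd$, the length $\ell$ of this path satisfies $\ell = 1$ or $\ell \geq 3$. I would write
\[
G_z(s) = A(z) + B(z) + R(z),
\]
where $A$, $B$, $R$ are the contributions of paths of length $1$, $3$, and $\geq 5$ respectively.

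For $A(z)$, removing the bond $\{0,s\}$ leaves two vertex-disjoint rooted subtrees at $0$ and at $s$; inclusion-exclusion on the disjointness yields $A(z) = z g(z)^2 - z I(z)$, where $I(z)$ is an intersection remainder expressible as a sum of two-point-function convolutions that can be bounded using Lemma~\ref{lem:DmGn}. For $B(z)$, the $2(d-1)$ length-$3$ paths from $0$ to $s = e_1$ each contribute $z^3$ times a product of four pairwise-disjoint rooted subtrees, so that $B(z) = 2(d-1) z^3 g(z)^4$ modulo a disjointness correction again controlled by Lemma~\ref{lem:DmGn}; note that at $z_c$ this gives the quantitatively crucial value $2(d-1) z_c^3 g_c^4 \sim \ee \cdot (2d)^{-2}$. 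The remainder $R(z)$, coming from paths of length $\geq 5$, is shown to be $o((2d)^{-2})$ at $z_c$ by the convolution bound implied by the infrared estimate.

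To combine everything at $z_c$, I would use \eqref{e:zc-bis} to substitute $z_c = 1/[2d(g_c + \hat\Pi_{z_c})]$, rewriting $z_c g_c^2 = g_c/[2d(1 + \hat\Pi_{z_c}/g_c)]$ and Taylor-expanding in $\hat\Pi_{z_c}/g_c$. The leading order reproduces $\ee/(2d)$, while the coefficient $7/2$ at order $(2d)^{-2}$ arises from the combined refined expansions of $g_c$ and $\hat\Pi_{z_c}$ (developed alongside this theorem), the contribution $B(z_c) \sim \ee \cdot (2d)^{-2}$, and the intersection correction $-z_c I(z_c)$. The main obstacle is the bookkeeping of the intersection remainders $I(z_c)$ and its $B$-analogue: each must be computed not merely as a bound but as an explicit asymptotic at order $(2d)^{-2}$, which requires Lemma~\ref{lem:DmGn} together with the leading asymptotics \eqref{e:zcleading}. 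A secondary difficulty is the coupling of $G_{z_c}(s)$ with $g_c$ and $\hat\Pi_{z_c}$, which forces a simultaneous bootstrap in powers of $(2d)^{-1}$ to close the system.
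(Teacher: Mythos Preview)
Your proposal is essentially correct and follows the same approach as the paper: decompose $G_{z_c}(s)$ by the length of a path from $0$ to $s$ (namely $1$, $3$, or $\ge 5$), bound the $\ge 5$ piece by Lemma~\ref{lem:DmGn}, write the length-$1$ piece as $z_c(g_c^2 - Q(s))$ with $Q(s)$ your $I(z_c)$, and evaluate the length-$3$ piece as $(2d-2)z_c^3 g_c^4 + o(2d)^{-2}$.

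One clarification: the recursion in the paper is \emph{sequential}, not simultaneous. By the time Theorem~\ref{thm:G_2} is proved, the second-order expansions $g_c = \ee[1+\tfrac{3}{2}(2d)^{-1}]+o(2d)^{-1}$ and $z_c = \ee^{-1}[(2d)^{-1}+\tfrac{3}{2}(2d)^{-2}]+o(2d)^{-2}$ are already in hand (Lemma~\ref{lem:g2}), obtained from only the \emph{first}-order information about $G_{z_c}(s)$ and $\hat\Pi_{z_c}$ (Lemmas~\ref{lem:G1}--\ref{lem:Pi1}) together with Theorem~\ref{thm:gstart}. Likewise $Q(s) = 2\ee^2(2d)^{-1}+o(2d)^{-1}$ is already established (Lemma~\ref{lem:Q}) from the leading asymptotics alone. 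The paper therefore simply substitutes these known values; there is no circularity to resolve, and your detour through \eqref{e:zc-bis} with a Taylor expansion in $\hat\Pi_{z_c}/g_c$ is unnecessary (though equivalent). Also note that $I(z_c)=Q(s)$ is not literally a convolution of two-point functions---it is a sum over pairs of intersecting clusters---and its leading asymptotic requires the dedicated argument of Lemma~\ref{lem:Q}, not just Lemma~\ref{lem:DmGn}.
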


\begin{theorem}
\label{thm:Pi-2terms}
For lattice trees or lattice animals,
\begin{align}
\label{e:Pia-2terms}
    \hat\Pi_{z_c}
    &=
    \ee\left[
    -\frac{3}{2d} - \frac{ \frac{27}2 - \1_{\rm a} \frac32 \ee^{-1}}{(2d)^2}
    \right]
    +  o(2d)^{-2}.
\end{align}
\end{theorem}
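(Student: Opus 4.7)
The plan is to start from the diagrammatic representation of $\hat\Pi_z$ provided by the lace expansions of \cite{HS90b,Hara08}, which express
\begin{equation*}
\hat\Pi_z = \sum_{N \ge 1} (-1)^N \hat\Pi_z^{(N)},
\end{equation*}
with $\hat\Pi_z^{(N)}$ a sum of convolution integrals of two-point functions indexed by lace diagrams having $N$ laces. The first step is to apply Lemma~\ref{lem:DmGn} --- the pointwise consequence of the infrared bound of \cite{HS90b} --- to prove the crude uniform bound $\hat\Pi_{z_c}^{(N)} = O((2d)^{-N})$, so that the tail $\sum_{N \ge 3}$ is absorbed into the error $o((2d)^{-2})$. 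This leaves only $\hat\Pi_{z_c}^{(1)}$ to order $(2d)^{-2}$ and $\hat\Pi_{z_c}^{(2)}$ to its leading order to be computed explicitly.

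Within those two terms, I would further localise: in every diagram, split the sum over internal vertices into those lying in $\{0\} \cup \{\text{nearest neighbours of } 0\}$ and those lying farther away. The far-range contribution is again controlled by Lemma~\ref{lem:DmGn}, while the short-range contribution reduces to a finite sum of products of $g_c$ and $G_{z_c}(s)$, both known to the required accuracy from Theorem~\ref{thm:G_2} and the expansion of $g_c$ established in parallel (following the recursive scheme outlined in Section~\ref{sec:pfstructure}). Summing the $2d$ choices of neighbour $s$ against $G_{z_c}(s) \sim \ee/(2d)$ produces a clean factor of $\ee$, and the leading $-3\ee/(2d)$ in \eqref{e:Pia-2terms} is recovered from the simplest one-lace diagram with its combinatorial weight $3$; the subleading coefficient $-27/2$ is then assembled by collecting the full list of short-range diagrams at order $(2d)^{-2}$ using the two-term expansion $G_{z_c}(s) = \ee\bigl[\frac{1}{2d}+\frac{7/2}{(2d)^2}\bigr] + o((2d)^{-2})$ from Theorem~\ref{thm:G_2}.

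For the animal-specific term $+\1_{\rm a}\,\frac{3}{2}\ee^{-1}/(2d)^2$, I would identify the additional class of diagrams contributing to $\hat\Pi_{z_c}$ in which the origin lies on a $4$-cycle, as mentioned following \eqref{e:zca3}; these are forbidden for trees and must be added separately. Their contribution can be computed by direct enumeration: there are $\binom{2d}{2}\cdot 2 \sim 2d^2$ such oriented $4$-cycles through the origin to leading order, weighted by $z_c^4 \sim (2d\ee)^{-4}$, together with a factor that accounts for the remaining animal decoration, and the result reproduces the stated coefficient.

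The main obstacle is the accurate combinatorial bookkeeping at order $(2d)^{-2}$. While Lemma~\ref{lem:DmGn} dispatches all error terms routinely, listing the finitely many short-distance diagram topologies in $\hat\Pi_{z_c}^{(1)}$ and $\hat\Pi_{z_c}^{(2)}$, expanding each using the two-term asymptotics of $G_{z_c}(s)$ and $g_c$, and verifying that the algebraic combinations cancel down to exactly $-27/2$ (together with $+\frac{3}{2}\ee^{-1}$ in the animal case) demands a delicate case analysis. The use of the first subleading term in Theorem~\ref{thm:G_2} is essential: the leading asymptotics of $G_{z_c}(s)$ alone would not be sufficient to reach the $(2d)^{-2}$ coefficient of $\hat\Pi_{z_c}$.
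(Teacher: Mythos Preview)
Your high-level architecture --- truncate the alternating series for $\hat\Pi_{z_c}$ using the a priori bounds \eqref{e:HSPibds}--\eqref{e:HSPibda}, then compute the surviving terms $\hat\Pi_{z_c}^{(N)}$ for small $N$ by isolating short-range configurations and bounding the rest via Lemma~\ref{lem:DmGn} --- matches the paper's strategy (see Lemmas~\ref{lem:Pi0}--\ref{lem:Pi2} and Theorem~\ref{thm:Pi-2terms-bis}). However, your description of the short-range computation has a genuine gap.

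You propose that the short-range contribution ``reduces to a finite sum of products of $g_c$ and $G_{z_c}(s)$'', and that the two-term expansion of $G_{z_c}(s)$ from Theorem~\ref{thm:G_2} is essential. This is not how the computation works. The factors $\U_{ij}$ in the definitions \eqref{Pi_uno}, \eqref{eq:Pi2} force \emph{intersections} between specified pairs of ribs, so the short-range terms are not products of two-point functions at all. For instance, $\Pi^{(1)}_{z_c}(s)$ with a length-one backbone equals $z_c Q(s)$, where $Q(s)$, defined in \eqref{e:Qdef}, sums over pairs of clusters rooted at $0$ and $s$ that are required to intersect; similarly, the $\U_{01}\U_{12}$ term in $\Pi^{(1,2)}_{z_c}(0)$ needs the three-cluster intersection quantity $Q^*(s)$ of \eqref{e:Qstardef}. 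The paper handles these by proving separate asymptotic formulas for $Q(s)$ and $Q^*(s)$ (Lemmas~\ref{lem:Q}--\ref{lem:QPi1}), and Theorem~\ref{thm:G_2} is in fact \emph{not used} in the proof of Theorem~\ref{thm:Pi-2terms}. The inputs are instead the two-term expansions of $g_c$ and $z_c$ from Lemma~\ref{lem:g2}, fed into the expansion \eqref{e:Q2} of $Q(s)$. Relatedly, your mechanism for the factor $\ee$ --- summing $2d$ copies of $G_{z_c}(s)\sim \ee/(2d)$ --- is not what happens: the leading $\ee/(2d)$ in $\Pi^{(1)}_{z_c}(0)$ arises as $2d z_c^2 g_c^3$, i.e.\ three independent ribs on a length-two backbone, with no $G_{z_c}(s)$ in sight.

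A smaller point: your expansion $\hat\Pi_z = \sum_{N\ge 1}(-1)^N\hat\Pi_z^{(N)}$ omits the $N=0$ term, which is present for lattice animals (see \eqref{eq:Pi0}). You recover the animal correction by appealing to $4$-cycles through the origin, and this is indeed the leading content of $\hat\Pi^{(a,0)}_{z_c}$ (Lemma~\ref{lem:Pi0}), but it should be identified as the $N=0$ term of the animal lace expansion rather than grafted on ad hoc.
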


Our method of proof follows
a recursive
procedure in which the calculation of the terms in the
expansion for $z_c$ is intertwined with the computation of the
terms in the expansions for $G_{z_c}(s)$, $\hat\Pi_{z_c}$ and $g_c$.
A key ingredient is the new expansion for the one-point function
developed in Section~\ref{sec:onept}.
Although \eqref{e:zcleading} has been proved already in \cite{MS11},
we give a different proof as the initial step in the recursion.
Our proof here is conceptually simpler and more direct
than that of \cite{MS11},
and also serves as a good introduction to the systematic computation
of higher order terms.
Our starting point consists of the estimates (valid for large $d$)
\begin{equation}
\label{e:step0}
   1 \le g_c \le 4,
    \quad\quad
    2dz_c g_c = 1 + o(1).
\end{equation}
The first of these bounds is proved in \cite{HS90b} for both lattice
trees and lattice
animals (the lower bound is trivial),
and the second is a consequence of \eqref{e:zc}
together with the estimate
$\hat\Pi_{z_c} = O(2d)^{-1}$ proved in \cite{HS90b}.  We comment in
more detail on the previously known bounds on $\hat\Pi_z$ in Section~\ref{sec:LE}
below.
It is an immediate consequence of \eqref{e:step0} that for large $d$ we have
$2dz_cg_c \in [\frac 12, 2]$, and hence
\begin{equation}
\label{e:zcbd0}
   \frac{ \frac{1}{8}}{2d} \le z_c \le \frac{2}{2d}.
\end{equation}

\begin{figure}[!h]
\centering
\includegraphics[scale=.4]{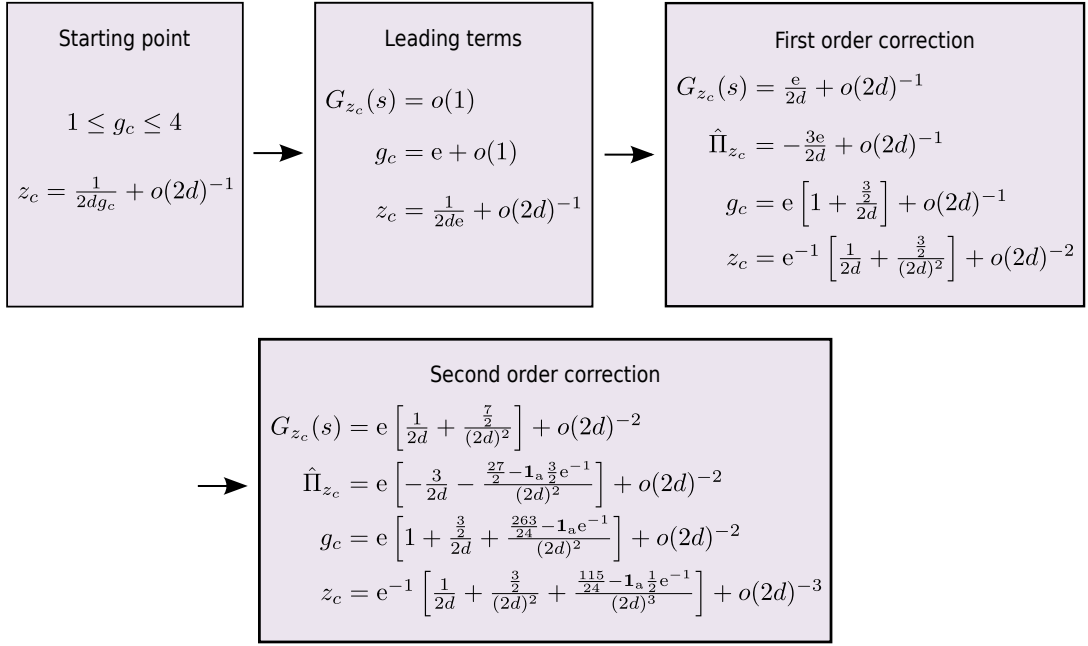}
\caption{Flow of the proof of Theorem~\ref{thm:1}.
The steps represented by the three arrows
are implemented in Sections~\ref{sec:term1},
\ref{sec:term2} and \ref{sec:term3}, respectively.
}
\label{flowchart}
\end{figure}

Our procedure consists of the three steps depicted in Figure~\ref{flowchart}.
In Section~\ref{sec:term1},
we first apply Lemma~\ref{lem:DmGn} to prove that
$G_{z_c}(s) = o(1)$, as a
very preliminary version of Theorem~\ref{thm:G_2}.
With \eqref{e:step0},
this permits us to apply the simplest version of our new
expansion for the one-point function to improve
\eqref{e:step0}--\eqref{e:zcbd0} to $g_c = \ee +o(1)$ and $z_c = (2d\ee)^{-1}
+o(2d)^{-1}$,
yielding \eqref{e:zcleading}.
Then in Section~\ref{sec:term2},
we apply \eqref{e:zcleading} to
compute the first terms on the right-hand sides
of \eqref{e:G0x_2}--\eqref{e:Pia-2terms}, then use the result
of that computation together with the expansion for the
one-point function to compute the second term of \eqref{e:ga},
and then from \eqref{e:zc-bis} obtain the second term of
\eqref{e:zca3}.
In Section~\ref{sec:term3},
we repeat the process,
obtaining an additional term for $\hat\Pi_{z_c}$, then an
additional term for $g_c$.
Once we have proved Theorem~\ref{thm:Pi-2terms}
and \eqref{e:ga},
the expansion \eqref{e:zca3}
follows immediately by substitution into \eqref{e:zc-bis}.
Due to the algorithmic nature of the procedure, there
is no reason in principle why further terms could not be
computed with further effort.
The results in Sections~\ref{sec:onept} and \ref{sec:term1}--\ref{sec:term3}
heavily rely on several technical estimates which we collect and
prove in Section~\ref{sec:cie}.

\section{Expansion for one-point function}
\label{sec:onept}

In this section, we develop a new expansion for the one-point
functions of lattice trees and lattice animals, simultaneously.
The expansion may be considered as a systematic use of
inclusion-exclusion to compare with the mean-field model
of lattice trees of \cite{BCHS99}, which is
based on the Galton--Watson branching process with critical
Poisson offspring distribution.

\subsection{Estimate for the one-point function}

We begin by stating the one result from Section~\ref{sec:onept},
in Theorem~\ref{thm:gstart} below,
that will be used later in the proof of Theorem~\ref{thm:1}.
The proof of Theorem~\ref{thm:gstart} uses
\emph{only} the starting bounds
\eqref{e:step0}, together with the important
Lemma~\ref{lem:DmGn} which is used to bound errors.

In the case of $g^{(a)}(z)$, it is convenient to separate
the sum over lattice animals depending on whether
the origin is contained in a cycle or not, which we
denote by $0\in {\rm cycle}$ and $0\not \in {\rm cycle}$, respectively.
For the former, we define
\begin{equation}
    g_\circ (z) = \1_{\rm a} \sum_{A\ni 0: 0\in {\rm cycle}}z^{|A|}.
\end{equation}
Then we obtain,
for either model,
\begin{equation}
\label{e:gLA}
    g (z)
    = \sum_{C\ni 0}z^{|C|}
    =\sum_{C\ni 0: 0\not\in {\rm cycle}}z^{|C|}
    + g_\circ (z),
\end{equation}
where the clusters $C$ are lattice trees or lattice animals depending
on which model we consider.  We will expand the first term on the
right-hand side of \eqref{e:gLA}, but do not expand $g_\circ(z)$.

We introduce the notion of a \emph{planted} tree or
animal as one which contains the origin as a vertex of degree 1.
An important role will be played by the generating function
\begin{equation}
\label{e:rdef}
    r(z)=\sum_{S\ni s} z^{|S|},
    \quad\quad  r_c = r(z_c),
\end{equation}
for clusters planted via the bond $\{0,s\}$ with $s$ a specific
neighbour of the origin
(by symmetry $r(z)$ does not depend on the choice of $s$).
We emphasise that in \eqref{e:rdef} we are abusing notation by
writing $S \ni s$ to denote
that the \emph{bond} $\{0,s\}$ is contained in
the planted cluster $S$; we will continue to use
this notational convention.
The generating function $r$ is related to the one- and two-point functions
by the identity
\begin{equation}
\label{e:rgG}
    r(z) = zg(z) - z G_z(s) .
\end{equation}
To see this, we use the definition of $r$ and inclusion-exclusion to
write
\begin{equation}
    r(z) = z \sum_{C \ni s : C \not\ni 0}z^{|C|}
    = z \sum_{C \ni s }z^{|C|}
    - z \sum_{C \ni s , 0}z^{|C|},
\end{equation}
and observe that the resulting right-hand side is identical to the
right-hand side of \eqref{e:rgG}.

At the critical value $z_c$, we can use \eqref{e:zc} to replace $g$ by
$\hat\Pi$ in \eqref{e:rgG}, and obtain
\begin{equation}
\label{e:rPiG}
    2dr_c = 1 - 2dz_c \hat\Pi_{z_c} - 2dz_c G_{z_c}(s).
\end{equation}
The identity \eqref{e:rPiG} will be useful in conjunction with the
following theorem.

\begin{theorem}
\label{thm:gstart}
For lattice trees or lattice animals,
\begin{align}
\label{e:gstart}
    g_c & =  \ee^{2dr_c}
    \left[
    1 - \frac 12 (2d)r_c^2 + \frac{1}{8}(2d)^2 r_c^4 - \frac{\frac 76}{(2d)^2}  \right]
    + g_\circ(z_c)
    +o(2d)^{-2}  .
\end{align}
\end{theorem}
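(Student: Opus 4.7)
My plan is to compare $g_c$ with a mean-field generating function in which the branches at the origin behave independently, and to account for the true vertex-disjointness constraint via inclusion--exclusion.

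First, I would split $g(z) = g_\bullet(z) + g_\circ(z)$ as in \eqref{e:gLA}, where $g_\bullet(z) = \sum_{C \ni 0,\, 0\notin\text{cycle}} z^{|C|}$. In any such cluster the origin is a cut vertex, so the cluster is the union of planted subclusters at the $k\le 2d$ neighbours of $0$ in the cluster, these being pairwise vertex-disjoint apart from $0$ (else a cycle through $0$ would form). Writing $\mathcal N$ for the set of neighbours of the origin,
\ecug{
g_\bullet(z) = \sum_{k=0}^{2d} \frac{1}{k!}\sum_{\substack{s_1,\ldots,s_k \in \mathcal N \\ \text{distinct}}} \sum_{(S_i)_{i=1}^k} \prod_{i<j} \1[S_i \cap S_j = \{0\}] \prod_i z^{|S_i|}.
}
I then expand the disjointness indicators via $\prod_{i<j}(1-\chi_{ij}) = \sum_{E\subseteq\binom{[k]}{2}}(-1)^{|E|}\prod_{(i,j)\in E}\chi_{ij}$, where $\chi_{ij}$ flags a shared non-origin vertex of $S_i, S_j$. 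The $E=\emptyset$ term contributes $(1+r(z))^{2d}$, and symmetrising the $|E|=1$ terms yields a correction $-\tfrac12(1+r(z))^{2d-2}\,\tilde R_2(z)$, where $\tilde R_2(z)=\sum_{s\ne s'\in\mathcal N}R_2^{(s,s')}(z)$ is the generating function for pairs of intersecting planted subclusters at distinct neighbours.

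Second, using $r_c=O((2d)^{-1})$---which follows from \eqref{e:step0}, \eqref{e:rgG} and the preliminary estimate $G_{z_c}(s)=o(1)$ proved in Section~\ref{sec:term1}---I would Taylor expand
\ecug{
(1+r_c)^{2d} = \ee^{2dr_c}\Bigl[1 - \tfrac12(2d)r_c^2 + \tfrac18(2d)^2 r_c^4 + \tfrac13(2d)r_c^3\Bigr] + O((2d)^{-3}),
}
and compute $\tilde R_2(z_c)$ to order $(2d)^{-2}$. Summing over a choice of shared non-origin vertex $v$ and dominating ``cluster at $s$ containing $v$'' by $G_z(v-s)$, I would obtain the upper bound $R_2^{(s,s')}(z)\le z^2\sum_{v\ne 0}G_z(v-s)\, G_z(v-s')$, with a matching lower bound coming from the single-vertex intersection configurations that dominate in high dimension. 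Lemma~\ref{lem:DmGn} then evaluates the ensuing convolutions of $G_{z_c}$'s to the required accuracy. Controlling the higher IE terms ($|E|\ge 2$) as $o((2d)^{-2})$ proceeds by iterating the same overcounting bound and appealing again to Lemma~\ref{lem:DmGn}.

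Finally, combining all pieces, the $\tfrac13(2d)r_c^3$ from the expansion of $(1+r_c)^{2d}$ and the $-\tfrac12\tilde R_2(z_c)$ correction merge into the $-\tfrac{7/6}{(2d)^2}$ coefficient appearing in the statement, while $g_\circ(z_c)$ is carried along unexpanded. The main obstacle is in the second step above: establishing that the overcounting upper bound for $R_2^{(s,s')}(z_c)$ is asymptotically tight at order $(2d)^{-2}$ for both lattice trees and lattice animals, and then evaluating the resulting convolution of two-point functions sharply enough via the infrared bound encapsulated in Lemma~\ref{lem:DmGn}. Bounding the tower of higher-order inclusion--exclusion corrections is subsidiary but technical, as each successive IE term introduces additional intersection constraints whose contributions must be handled by convolution estimates of the same general form.
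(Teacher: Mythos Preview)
Your decomposition is correct and is a genuine variant of the paper's. The paper sums over \emph{not necessarily distinct} neighbours $s_1,\dots,s_m$, so that the factor $1+\mathcal V_{ij}$ simultaneously enforces $s_i\neq s_j$ and rib avoidance; the leading term is then exactly $\Gamma^{(0)}=\ee^{2dr}$, but three quantities $Z^{(1)},Z^{(2)},Z^{(3)}$ must be computed to order $(2d)^{-2}$ (the dominant parts of $Z^{(2)},Z^{(3)}$ come precisely from coincident neighbours). By taking the $s_i$ distinct from the outset you trade this for a main term $(1+r)^{2d}$ whose Taylor expansion against $\ee^{2dr}$ supplies the $\tfrac13(2d)r_c^3$ piece, and then only the single correction $\tilde R_2$ (the $s\neq s'$ part of the paper's $Z^{(1)}$) survives at order $(2d)^{-2}$; the higher inclusion--exclusion terms with distinct $s_i$ are genuinely $o((2d)^{-2})$ because the paper's analysis shows that the $(2d)^{-2}$ contributions to $Z^{(2)},Z^{(3)}$ come entirely from $s_1=s_2=s_3$. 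Your bookkeeping $\tfrac13-\tfrac32=-\tfrac76$ is correct.

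There is, however, a real gap in how you propose to compute $\tilde R_2(z_c)$. Lemma~\ref{lem:DmGn} (the infrared bound) produces only error estimates of the form $S^{(m,n)}_{z_c}=O((2d)^{-m/2})$; it cannot ``evaluate the resulting convolution of two-point functions sharply'' and will never hand you the coefficient $3$. Moreover your upper bound $R_2^{(s,s')}\le z^2\sum_{v\neq 0}G_z(v-s)G_z(v-s')$ is \emph{not} asymptotically tight at the relevant order: it overcounts both by the union bound over intersection vertices and by dropping the planted constraint that the sub\-clusters avoid $0$; and even if it were tight, the convolution $(G_{z_c}*G_{z_c})(s-s')$ is not something Lemma~\ref{lem:DmGn} computes. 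What actually works---and what the paper does for the $s_1\neq s_2$ part of $Z^{(1)}$ in the proof of \eqref{e:Zbd}---is to isolate the minimal configurations explicitly: for $s\perp s'$ the two planted clusters must together contain the unit square on $0,s,s',s{+}s'$, and the three nonzero corners give three choices of intersection vertex, yielding the leading term $3(2d)(2d-2)z_c^4 g_c^4=3(2d)^{-2}+o((2d)^{-2})$; all longer intersections, and the parallel case $s'=-s$, require at least six bonds and are disposed of by $S^{(6,2)}_{z_c}=O((2d)^{-3})$ via Lemma~\ref{lem:DmGn}. So the lemma is used only to kill the tail, while the coefficient comes from direct enumeration. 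Once you replace your convolution argument by this enumeration, the rest of your plan goes through.
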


The proof of Theorem~\ref{thm:gstart} will be discussed at the
end of Section~\ref{sec:onept}.
It is based on the expansion for $g$ which
we discuss next.
The remainder of Section~\ref{sec:onept} is
needed only for the proof of Theorem~\ref{thm:gstart}.

\subsection{Expansion for the one-point function}

The one-point function for trees, and for animals
in which the origin does not belong to a cycle,
have the following similar structure.
A tree $T$, or an animal $A$ for which the origin is not in
a cycle, consists either of the single vertex $0$, or of some
number $m \in \{1,\ldots, 2d\}$ of planted clusters
$S_i$ which intersect
pairwise only at the origin.  This is depicted in
Figure~\ref{fig:plantedclusters}.

\begin{figure}[!h]
\centering
\includegraphics[scale=.4]{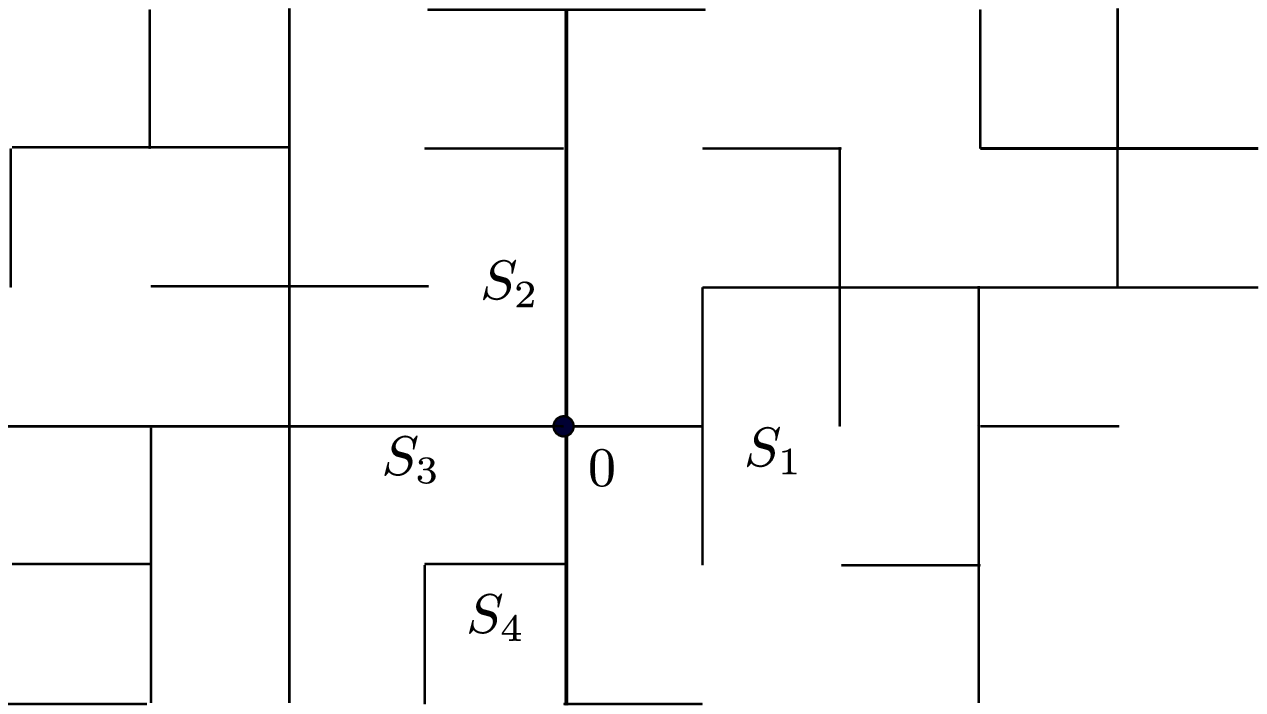}
\quad
\includegraphics[scale=.4]{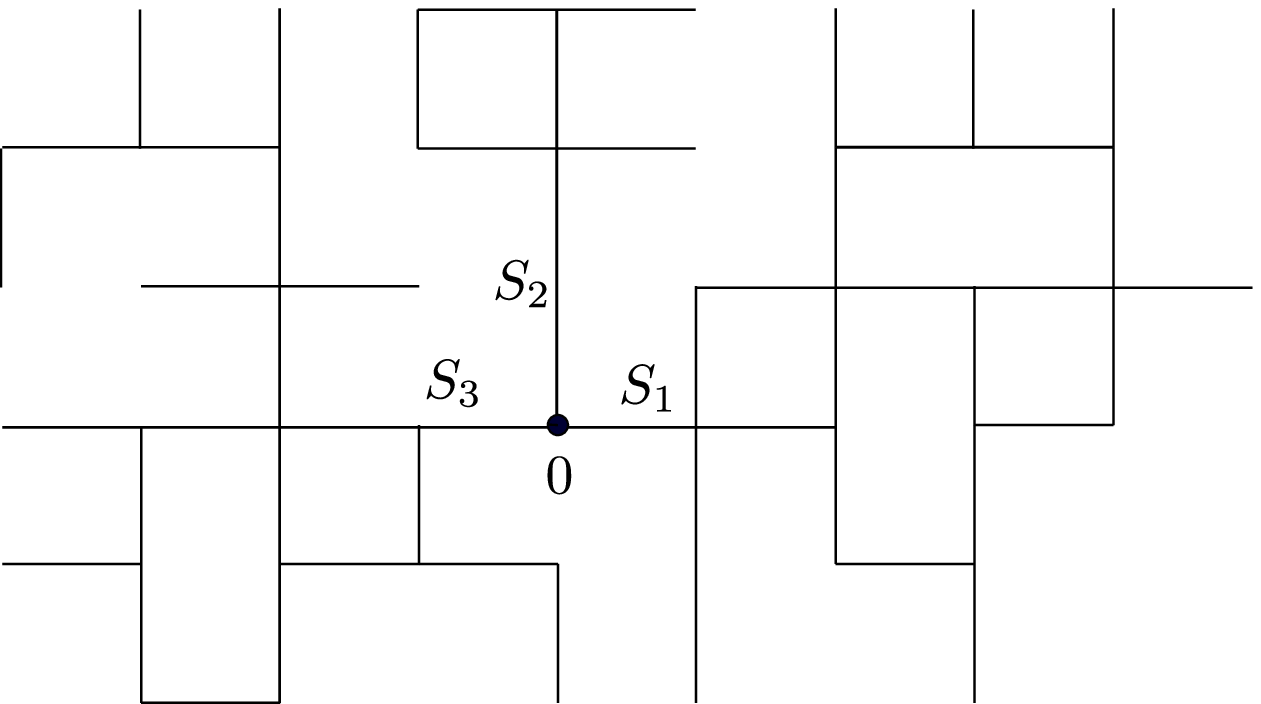}
\caption{Decomposition into planted clusters
$S_i$, for a lattice tree and for a lattice animal
with $0 \not \in \text{cycle}$.
\label{fig:plantedclusters}
}
\end{figure}

Given $0\leq i<j\leq m$ and a set $\vec
S=\{S_1,\dots,S_m\}$ of $m$ planted clusters,
we define
\begin{equation}
    {\cal V}_{ij}({\vec S})=
        \begin{cases}
        -1
        & \text{if $S_i$ and $S_j$ share a common vertex other than $0$}
        \\
        0
        &\text{if $S_i$ and $S_j$ share no common vertex  other than $0$.}
        \end{cases}
\end{equation}
Let ${\cal E} =\{e_1,e_2,\ldots,e_{2d}\}$ consist of the $2d$
nearest neighbours of the origin ordered such that
$e_i=(0,\dots,1,0,\dots,0)$, where the 1 is located at the
$i$-th coordinate for $1\leq i\leq d$, and $e_i=-e_{i-d}$
for $d+1\leq i\leq 2d$.
Then we can rewrite the one-point function as
\begin{align}
\label{e:LEgz}
    g(z)
    &=\sum_{m=0}^{\infty}\frac{1}{m!} \sum_{s_1,\dots,s_m\in {\cal E}}
    \sum_{S_1\ni s_1} z^{|S_1|} \cdots
    \sum_{S_m\ni s_m} z^{|S_m|}
    \prod_{1\leq i<j\leq m} (1+{\cal V}_{ij}) + g_\circ (z).
\end{align}
The factor
$(1+{\cal V}_{ij})$ ensures that $S_i$ and $S_j$ do not
intersect each other except at the origin;
in particular, this excludes the possibility
that $s_i=s_j$.
It also ensures that the sum over $m$ in \eqref{e:LEgz} is actually
a finite sum, since the terms vanish for $m>2d$.

It follows easily by induction on $n \ge 0$ that
\begin{equation}
\label{e:prodxi}
    \prod_{1 \le a \le n} (1 + x_a )
    =   1   +   \sum_{1 \le a \le n}  x_a \prod_{a <b \le n} ( 1 + x_b ).
\end{equation}
Throughout the paper, an empty product equals $1$ and an empty sum
equals $0$.
Iteration of \eqref{e:prodxi} gives
\begin{align}
\label{e:prodxi3}
    \prod_{1 \le a \le n}( 1 + x_a )
    &=  1   +   \sum_{1 \le a \le n}  x_a
    + \sum_{1 \le a < b \le n} x_a x_b
    + \sum_{1 \le a < b < c  \le n}x_ax_bx_c
    \nonumber \\ & \quad
    + \sum_{1 \le a < b < c < d \le n}x_ax_bx_cx_d
    \prod_{d<e \le n}( 1 + x_e ).
\end{align}
We apply \eqref{e:prodxi3} to the product
$\prod_{1\leq i<j\leq m}(1+{\cal V}_{ij})$
in \eqref{e:LEgz},
with the lexicographic order on the indices $(i,j)$.
To facilitate this, for $m \ge 2$ we define
\begin{equation}
\label{e:setAij}
    A_{ij}= A_{ij}(m) =
    \{(i,l): j < l \le m\} \cup
    \{ (k,l) :
    i<k <  l \le m \};
\end{equation}
thus $A_{ij}$ consists of the indices that are lexicographically larger
than $(i,j)$.
Then \eqref{e:prodxi3} gives
\begin{equation}
    \prod_{1\leq i<j\leq m} (1+{\cal V}_{ij})
    =
    {\cal J}_m^{(0)}
    -
    {\cal J}_m^{(1)}
    +
    {\cal J}_m^{(2)}
    -
    {\cal J}_m^{(3)}
    +
    \tilde{\cal J}_m^{(4)},
\end{equation}
where
\begin{align}
\label{e:Def_Jcal0}
    {\cal J}_m^{(0)} & =1,
    \\
\label{e:Def_Jcal1}
    {\cal J}_m^{(1)}
    & =
    \sum_{1\leq i<j\leq m}(-{\cal V}_{ij}),
    \\
    {\cal J}_m^{(2)}
    & =
    \sum_{1\le i<j \le m}
    \sum_{ (k,l)\in A_{ij} } {\cal V}_{ij}{\cal V}_{kl},
    \\
\label{e:Def_Jcal3}
    {\cal J}_m^{(3)}
    & =
    \sum_{1\le i<j \le m}
    \sum_{ (k,l)\in A_{ij} }
    \sum_{(p,q)\in A_{kl}} (-{\cal V}_{ij}{\cal V}_{kl} \V_{pq}),
    \\
    \tilde{\cal J}_m^{(4)}
    & =
    \sum_{1\le i<j \le m}
    \sum_{ (k,l)\in A_{ij} }
    \sum_{(p,q)\in A_{kl}}
    \sum_{(r,s)\in A_{pq}}
    {\cal V}_{ij}{\cal V}_{kl} \V_{pq}\V_{rs}{\cal I}_{rs}
    ,
\end{align}
with ${\cal I}_{rs}  =   \prod_{(t,u)\in A_{rs}} ( 1 + {\cal V}_{tu} )$.
This leads to the expansion
\begin{equation}
\label{e:gexp}
    g(z)
    = \Gamma^{(0)}(z) - \Gamma^{(1)}(z)+
    \Gamma^{(2)}(z) - \Gamma^{(3)}(z)+
    \tilde\Gamma^{(4)}(z) + g_\circ(z),
\end{equation}
where
\begin{align}
\label{e:B}
    \Gamma^{(i)}(z)  &
    = \sum_{m=0}^{\infty}\frac{1}{m!}
    \sum_{s_1,\dots,s_m\in {\cal E}}
    \sum_{S_1\ni s_1} z^{|S_1|} \cdots
    \sum_{S_m\ni s_m} z^{|S_m|}
    {\cal J}_m^{(i)} \quad\quad (i=0,1,2,3)
    ,
    \\
\label{e:Gam4til}
    \tilde\Gamma^{(4)}(z) & =
    \sum_{m=2}^{\infty}\frac{1}{m!} \sum_{s_1,\dots,s_m\in {\cal E}}
    \sum_{S_1\ni s_1} z^{|S_1|} \cdots  \sum_{S_m\ni s_m}
    z^{|S_m|}
    \tilde {\cal J}_m^{(4)}.
\end{align}
Note that $i$ in $\Gamma^{(i)}$ counts the
number of factors of $\V$ in each term, and the remainder term
$\tilde\Gamma^{(4)}$ also contains the factor ${\cal I}_{rs}$.
This last factor could be expanded further, and the
process continued indefinitely, but for the proof
of Theorem~\ref{thm:1} the expansion \eqref{e:gexp} suffices.

\subsection{Identities and estimates for the one-point function}

In this section, we first
prove identities needed for the analysis of the $\Gamma^{(i)}$.
These are then used,
together with estimates whose proofs are deferred to Section~\ref{sec:cie},
to provide an expansion for $g_c$ in terms of $r_c$.

The term $\Gamma^{(0)}(z)$ can be immediately computed.  Indeed,
by its definition in \eqref{e:B} and \eqref{e:Def_Jcal0}, and by \eqref{e:rdef},
\begin{align}
\label{e:Ar}
    \Gamma^{(0)}(z)  &
    = \sum_{m=0}^{\infty}\frac{ 1 }{ m! } (2d)^m r(z)^m
    =
    \ee^{2d  r(z)}
    .
\end{align}
The term $\Gamma^{(1)}$ is also straightforward, as we show below.
For the analysis of $\Gamma^{(2)}(z)$ and $\Gamma^{(3)}(z)$,
it will be useful to decompose
according to the cardinality of the label sets
$\{i,j,k,l\}$ and $\{i,j,k,l,p,q\}$
(respectively in ${\cal J}_m^{(2)}$ and ${\cal J}_m^{(3)}$)
and we write $\Gamma^{(m,n)}$
for the contribution to $\Gamma^{(m)}$ arising
from label sets of cardinality $n$.  Thus, for $m=2,3$, $\Gamma^{(m)}
=\sum_n \Gamma^{(m,n)}$,
where $m$ counts the number of $\V$ factors and $n$ counts the
cardinality of the label set.
In particular, when $m=2$ we have the two possibilities $n=3,4$,
while for $m=3$ the possibilities are $n=3,4,5,6$.
As we discuss in more detail below, $\Gamma^{(3,n)}$ is an error
term for $n=4,5,6$, as is $\tilde\Gamma^{(4)}$.
For Theorem~\ref{thm:1}, we will need an accurate calculation of
$\Gamma^{(2,3)}(z)$, $\Gamma^{(2,4)}(z)$ and $\Gamma^{(3,3)}(z)$.
To obtain convenient expressions
for these important terms, we make the definitions
\begin{align}
\label{e:Zdef}
    Z^{(1)}(z) &=
    \sum_{s_1,s_2\in \setE}
    \sum_{S_1\ni s_1} \sum_{S_2\ni s_2}
    z^{\abs{S_1} + \abs{S_2}}  (-{\cal V}_{12}),
\\
\label{e:Zstar}
    Z^{(2)}(z)
    &=
    \sum_{s_1,s_2,s_3 \in \setE}
    \sum_{S_1\ni s_1}\sum_{S_2 \ni s_2}\sum_{S_3\ni s_3}
    \z{4}^{\abs{S_1}+ \abs{S_2} +\abs{S_3}} \V_{12}\V_{13},
    \\
\label{e:Zstar2}
    Z^{(3)}(z)
    & =
    \sum_{s_1,s_2,s_3 \in \setE}
    \sum_{S_1\ni s_1}\sum_{S_2 \ni s_2}\sum_{S_3\ni s_3}
    \z{4}^{\abs{S_1}+ \abs{S_2} +\abs{S_3}} (-\V_{12}\V_{13}\V_{23})
    .
\end{align}

\begin{lemma}
\label{lem:Gams}
The following identities hold:
\begin{align}
\label{e:BAZ}
    \Gamma^{(1)}(\z{4})  &=
    \frac {1}{2!} \Gamma^{(0)}(z)  Z^{(1)}(z),
    \\
\label{e:C3-3}
    \Gamma^{(2,3)}(z) & =
    \frac{3}{3!} \Gamma^{(0)} (z)
    Z^{(2)}(z),
    \\
\label{e:C4decomp}
    \Gamma^{(2,4)}(z) & =
    \frac{3}{4!} \Gamma^{(0)} (z)
    Z^{(1)}(z)^2
    ,
    \\
\label{e:C23decomp}
    \Gamma^{(3,3)}(z) & =
    \frac{1}{3!} \Gamma^{(0)}(z )
    Z^{(3)}(z).
\end{align}
\end{lemma}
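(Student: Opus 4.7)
The plan is to prove all four identities by the same three-step recipe. Each $\Gamma^{(i,n)}(z)$ is a sum over $m\ge n$ of $\frac{1}{m!}$ times a constrained sum over configurations of one, two or three pairs of cluster labels whose union has cardinality $n$. First, I will exploit the invariance of the integrand under permutations of the cluster labels $\{1,\dots,m\}$ to replace the choice of which $n$ labels are \emph{active} by the canonical choice $\{1,\dots,n\}$, at the cost of a multiplicity $\binom{m}{n}$. Second, the remaining $m-n$ inactive labels are summed independently, each producing
\[
    \sum_{s\in\mathcal{E}}\sum_{S\ni s}z^{|S|}=2dr(z),
\]
so that together they contribute $(2dr(z))^{m-n}$. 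Third, the identity
\[
    \sum_{m\ge n}\frac{1}{m!}\binom{m}{n}(2dr(z))^{m-n}
    =\frac{1}{n!}\ee^{2dr(z)}=\frac{1}{n!}\Gamma^{(0)}(z)
\]
supplies the common prefactor $\Gamma^{(0)}(z)/n!$ on every right-hand side. What remains is a combinatorial count of how many configurations on the canonical labels $\{1,\dots,n\}$ contribute, which will produce the numerators $1,3,3,1$ appearing in \eqref{e:BAZ}--\eqref{e:C23decomp}, together with an identification of the resulting $Z$-factor.

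Before counting I note the preliminary observation that $(k,l)\in A_{ij}$ is equivalent to $(i,j)<(k,l)$ in the lexicographic order on pairs with first coordinate smaller than second; this is immediate from \eqref{e:setAij}. For \eqref{e:BAZ} with $n=2$ there is a unique pair on two labels, giving multiplicity $1$ and the factor $Z^{(1)}(z)$. For \eqref{e:C4decomp} with $n=4$ the two $\mathcal{V}$-factors must use four distinct labels, so they correspond to a perfect matching of $\{1,2,3,4\}$; there are three such matchings, and because each consists of two disjoint pairs the sum over the two pairs decouples into two independent copies of the $(s_1,S_1,s_2,S_2)$ sum, yielding $Z^{(1)}(z)^2$ and multiplicity $3$. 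For \eqref{e:C23decomp} with $n=3$ the three $\mathcal{V}$-factors must exhaust all three pairs of $\{1,2,3\}$ in the unique lex chain $(1,2)<(1,3)<(2,3)$, producing the triangle pattern $\mathcal{V}_{12}\mathcal{V}_{13}\mathcal{V}_{23}$ of $Z^{(3)}(z)$ with multiplicity $1$.

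The main case, and the main obstacle, is \eqref{e:C3-3} with $n=3$. For labels $a<b<c$, selecting two of the three available pairs $(a,b),(a,c),(b,c)$ so that exactly three labels appear forces the two pairs to share a single common label; the three choices of that shared label give the three lex-ordered configurations $((a,b),(a,c))$, $((a,b),(b,c))$, $((a,c),(b,c))$, which are precisely the ``star'' patterns in which one cluster meets the other two. By relabelling cluster indices each such star maps to the canonical pattern $\mathcal{V}_{12}\mathcal{V}_{13}$ of $Z^{(2)}(z)$, giving multiplicity $3$ and factor $Z^{(2)}(z)$. The three-step recipe then yields \eqref{e:C3-3}, completing the lemma. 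The only real subtlety is this enumeration of lex-ordered ordered pairs on three labels with exactly one shared index; everything else is an instance of the symmetry-and-resummation template above.
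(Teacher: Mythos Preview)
Your proof is correct and follows essentially the same approach as the paper: interchange the sums over cluster labels and pair indices, use the symmetry of the integrand to fix the $n$ active labels as $\{1,\dots,n\}$ at the cost of a factor $\binom{m}{n}$, sum the remaining labels to produce $(2dr(z))^{m-n}$, and then count the lex-ordered configurations on the canonical labels to obtain the multiplicities $1,3,3,1$ and the corresponding $Z$-factors. Your enumeration of the three star patterns for $\Gamma^{(2,3)}$ matches the paper's case split $k=i$, $k=j$, $l=j$, and your perfect-matching count for $\Gamma^{(2,4)}$ matches the paper's ``$i$ is smallest, $j$ has three options'' argument.
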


\begin{proof}
For $\Gamma^{(1)}(z)$,
we interchange the sums over $s_1,\dots,s_m\in \setE$
and $1\leq i<j\leq m$ which arise by substitution of \eqref{e:Def_Jcal1}
into \eqref{e:B} to obtain
\begin{align}
    \Gamma^{(1)}(\z{4})
    &=\sum_{m=2}^{\infty}\frac{1}{m!} (2dr(z))^{m-2}
        \sum_{1\leq i<j\leq m} \hspace{.2cm} \sum_{s_i,s_j\in \setE}
        \sum_{S_i\ni s_i}
          \z{4}^{\abs{S_i}} \sum_{S_j\ni s_j} \z{4}^{\abs{S_j}} (-{\cal V}_{ij})
    \nonumber \\
    &= \sum_{m=2}^{\infty}\frac{1}{m!}(2dr(z))^{m-2}
    \binom{m}{2}
    Z^{(1)}(z)
     \nonumber \\
    &= \frac 12 \Gamma^{(0)}(z)  Z^{(1)}(z),
\end{align}
where we used \eqref{e:Ar} in the last step.

For $\Gamma^{(2,3)}$, the condition $\abs{\{i,j,k,l\}}=3$ is satisfied when
$k=i$, $k=j$ or $l=j$. In all cases, we choose three
labels from a set of $m$ and order them; this order
automatically determines which one corresponds to $i$, $j$, $k$ and $l$.
Hence, the number of options for the labels is $3{m\choose 3}$.
Using symmetry, we obtain
\begin{align}
\label{e:C3-3-pf}
    \Gamma^{(2,3)}(z) & =
    \sum_{m=3}^\infty \frac{1}{m!} \ob{ 2dr(z) }^{m-3}
    3 \binom{m}{3}
    \sum_{s_1,s_2,s_3 \in \setE}
    \sum_{S_1\ni s_1}\sum_{S_2 \ni s_2}\sum_{S_3\ni s_3}
    \z{4}^{\abs{S_1}+ \abs{S_2} +\abs{S_3}} \V_{12}\V_{13}
    \nonumber \\
    & =
    \frac{3}{3!} \Gamma^{(0)}(z )
    Z^{(2)}(z).
\end{align}

For the case $\Gamma^{(2,4)}$, the labels
$i,j,k,l$ are distinct.
To determine the number of possibilities for the labels we chose
four labels from a set of $m$ and order them. Then $i$ is the
smallest by definition, $j$ has the remaining 3 options,
and once $j$ is determined, so are $k$ and $l$. Hence, there are
$3\binom{m}{4}$ possibilities.
By interchanging sums and using symmetry,
we obtain
\begin{align}
\label{e:C4decomp-pf}
    \Gamma^{(2,4)}(z) & =
    \sum_{m=4}^\infty \frac{1}{m!} \ob{ 2dr(\z{4}) }^{m-4}
    3 \binom{m}{4}
    \sum_{s_1,s_2,s_3,s_4 \in \setE}
    \sum_{S_1\ni s_1}\sum_{S_2 \ni s_2}\sum_{S_3\ni s_3}\sum_{S_4\ni s_4}
    \z{4}^{\abs{S_1}+ \abs{S_2} +\abs{S_3}+\abs{S_4}} \V_{12}\V_{34}
    \nonumber \\ & =
    \frac{3}{4!}  \Gamma^{(0)}(z)
    Z^{(1)}(z)^2
    .
\end{align}

For $\Gamma^{(3,3)}$, it must be the case that
$i<j<l$, $k=i$, $p=j$ and $q=l$.
Thus the number of possibilities for the labels is given by choosing
three labels from a set of $m$ and ordering them in this way.
By interchanging sums and using symmetry,
we obtain
\begin{align}
    \Gamma^{(3,3)}(z) & =
    \sum_{m=3}^\infty \frac{1}{m!} \ob{ 2dr(z) }^{m-3}
    \binom{m}{3}
    \sum_{s_1,s_2,s_3 \in \setE}
    \sum_{S_1\ni s_1}\sum_{S_2 \ni s_2}\sum_{S_3\ni s_3}
    \z{4}^{\abs{S_1}+ \abs{S_2} +\abs{S_3}} (-\V_{12}\V_{13}\V_{23})
    \nonumber \\ & =
    \frac{1}{3!} \Gamma^{(0)}(z )
    Z^{(3)}(z)
    .
\end{align}

This completes the proof.
\end{proof}

Now we can prove Theorem~\ref{thm:gstart}, using estimates
from Section~\ref{sec:Zestimates}.
The estimates we require are that
\begin{align}
\label{e:Zbd3}
    Z^{(1)}_c
    &=
    2dr_c^2 + \frac{3}{(2d)^2} + o(2d)^{-2}
    ,
    \quad
    Z^{(2)}_c
    =
    \frac{ 1 }{(2d)^2}
    + o(2d)^{-2}
    ,
    \quad
    Z^{(3)}_c
    =
    \frac{ 1}{(2d)^2}
    + o(2d)^{-2}
\end{align}
(proved in Lemma~\ref{lem:ZZZ}), and that
the terms $\Gamma^{(3,n)}(z_c)$
($n=4,5,6$) and $\tilde\Gamma^{(4)}(z_c)$ are all $O(2d)^{-3}$
(proved in Lemma~\ref{lem:Gamma34}).  The proofs of
Lemmas~\ref{lem:ZZZ}--\ref{lem:Gamma34} depend
only on the starting bounds \eqref{e:step0},
together with Lemma~\ref{lem:DmGn} which gives error estimates.

\begin{proof}[Proof of Theorem~\ref{thm:gstart}.]
We substitute the identities of Lemma~\ref{lem:Gams} into \eqref{e:gexp},
and apply the results of Lemmas~\ref{lem:ZZZ}--\ref{lem:Gamma34}
mentioned above
(together with $r_c \le z_c g_c=O(2d)^{-1}$ by \eqref{e:step0}),
to obtain
\begin{align}
\label{e:gstart-pf}
    g_c & =  \ee^{2dr_c}
    \left[
    1 - \frac{1}{2!}Z^{(1)}_c +
    \left(\frac{3}{3!} Z^{(2)}_c+  \frac{3}{4!} ( Z^{(1)}_c)^2 \right)
    -
    \frac{1}{3!} Z^{(3)}_c \right]
    + g_\circ(z_c) +o(2d)^{-2}
    \nonumber
    \\
    & =  \ee^{2dr_c}
    \left[
    1 - \frac 12 (2d)r_c^2 + \frac{1}{8}(2d)^2 r_c^4 - \frac{\frac 76}{(2d)^2}  \right]
    + g_\circ(z_c)
    +o(2d)^{-2}   ,
\end{align}
and the proof is complete.
\end{proof}

\section{Lace expansion}
\label{sec:LE}

We recall some fundamental facts about the lace expansion for lattice
trees and lattice animals from \cite{HS90b} (see also
\cite{Hara08,Slad06}).

\subsection{Lace expansion for lattice trees}

A lattice tree containing $0,x$, which contributes to the two-point
function $G_z(x)=\sum_{T \ni 0,x}z^{|T|}$ of
\eqref{e:2ptfcndef}, can be decomposed into a unique path joining
$0$ and $x$, which we call the \emph{backbone},
together with the disjoint collection of subtrees consisting of
the connected components that remain after the bonds in the backbone
(but not the vertices) are removed.  We refer to the subtrees
(which may consist of a single vertex) as
\emph{ribs}.  The definitions should be clear from Figure~\ref{fig:tree_br}.

\begin{figure}[!h]
 \centering
 \includegraphics[scale=.5]{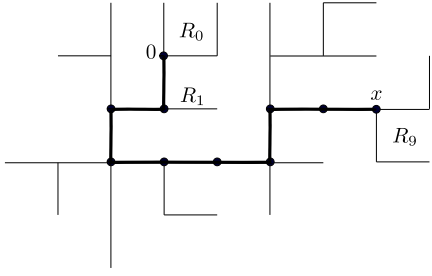}
 \caption[Decomposition of a lattice tree]
 {Decomposition of a lattice tree $T$ into the
  backbone from 0 to $x$ (bold) and the ribs
  $\vec R=\{R_0,\dots,R_{9} \}$.}
 \label{fig:tree_br}
\end{figure}

By definition, the ribs are mutually avoiding.  However,
in high dimensions, if this avoidance restriction were relaxed then
intersections between ribs should be in some
sense still rare.  The lace expansion is a way of making this
vague intuition
precise, via a systematic use of inclusion-exclusion.
To describe the basic idea, we need the following definitions.

Let $D:\Zd\to\R$ denote the one-step transition probability function
for simple random walk on $\Zd$, i.e.,
\ecus{eq:D}{
    D(x)    &=
          \begin{cases}
              (2d)^{-1}&\text{if }\|x\|_{1}=1,\\
              0 &\text{otherwise.}
          \end{cases}
}
The {\it convolution} of absolutely summable functions
$f:\Zd\to\R$ and $h:\Zd\to\R$ is given by
\ecu{eq:conv}{
    (f*h)(x)=\sum_{y\in\Zd}f(y)h(x-y).
}

If it were the case that the rib $R_0$ were permitted to intersect
the remaining ribs, then the two-point function
$G_z^{(t)}(x)$ (for $x \neq 0$)
would be given by the convolution
\begin{equation}
\label{e:2ptinc}
    g^{(t)}(z) (2dzD*G_z^{(t)})(x)
    = g^{(t)}(z) \sum_{y \in \Zd} 2dzD(y)G_z^{(t)}(x-y),
\end{equation}
where the factor $g^{(t)}(z)$ captures the rib at the origin, $y$ is the
location of the next vertex after $0$ along the backbone,
and $G_z^{(t)}(x-y)$ captures the backbone from $y$ to $x$ together with
its ribs.  Compared to the two-point function, \eqref{e:2ptinc}
permits disallowed intersections and thus
includes too much.  In fact, it provides the basis of the
mean-field model introduced in \cite{DS97} and further
studied in \cite{BCHS99,Slad06}. The lace expansion corrects
the overcounting in \eqref{e:2ptinc} with the help of the function
$\Pi_z : \Zd \to \R$ which appears in the \emph{identity}
\begin{equation}
    G_z^{(t)}(x) = \delta_{0,x} g^{(t)}(z)
    + \Pi_z^{(t)}(x) + g^{(t)}(z) (2dz D*G_z^{(t)})(x)
    + (\Pi_z^{(t)} * 2dzD*G_z^{(t)})(x).
\end{equation}
In \cite{HS90b}, an
expansion for $\hat\Pi_z^{(t)} = \sum_{x\in \Zd} \Pi_z^{(t)}(x)$ is given,
of the form
\begin{equation}
    \hat\Pi_z^{(t)} = \sum_{N=1}^\infty (-1)^N \hat\Pi_z^{(t,N)}.
\end{equation}
It is known  (see \cite{Hara08})
that there is a $c>0$ such that for all $N \ge 1$ and all $z \in [0,z_c]$,
\begin{equation}
\label{e:HSPibds}
    0 \le \hat\Pi_{z}^{(t,N)} \le c^N d^{-N}
\end{equation}
and this implies that the only terms that can
contribute to \eqref{e:Pia-2terms} for lattice trees are those with
$N=1,2$.  We define these terms next.

We define $\U_{ij}(\vec R)$ by
\ecu{eq:Uij}{
    \U_{ij}(\vec R)=
        \begin{cases} -1
        & \text{if ribs $R_i$ and $R_j$ share a common vertex}
        \\
        0 &\text{if ribs $R_i$ and $R_j$ share no common vertex.}
        \end{cases}
}
Let $\mathcal W(x)$ denote the set of simple random walk paths $\omega$
from $0$ to $x$, i.e., sequences $x_0=0,x_1,\ldots, x_n=x$ with
$\|x_{i+1}-x_i\|_1=1$ for all $i$, for any length $n =|\omega| \ge 0$.
The function $\PIN{4}{1}(x)$ is defined by
\ecus{Pi_uno}{
    \PIN{4}{1}(x)
    &=  \sum_{\omega\in\mathcal W(x) :|\omega|\geq 1}
    \z{4}^{|\omega|}
    \sum_{R_0\ni\omega(0)} \z{4}^{|R_0|}
    \cdots
    \sum_{R_{|\omega|}\ni x} \z{4}^{|R_{|\omega|}|}
    (-\U_{0 |\omega|})
    \prod_{0\leq i<j\leq |\omega|\atop(i,j)\neq(0,|\omega|)}
    \ob{1+\U_{ij}}.
}
For a nonzero contribution,
the factor $\U_{0 |\omega|}$ forces the first and last ribs to
intersect, while the final product disallows all other
intersection among the ribs.
The function $\PIN{4}{2}(x)$ is defined by
\ecu{eq:Pi2}{
    \PIN{4}{2}(x)
    = \sum_{\omega\in\mathcal W(x): |\omega|\ge 2}
    \z{4}^{|\omega|}
    \sum_{R_0\ni\omega(0)} \z{4}^{|R_0|}
    \cdots
    \sum_{R_{|\omega|}\ni x} \z{4}^{|R_{|\omega|}|}
    \sum_{L \in\mathcal L ^{(2)}[0,|\omega|]} \,
    \prod_{ij\in L}\U_{ij}\prod_{i'j'\in \mathcal C(L)}
    \ob{1+ \U_{i'j'} }  ,
}
where the set $\mathcal L^{(2)}[0,|\omega|]$ of
(2-edge) \emph{laces}
is given by:
\begin{equation}
\label{e:Lcal2}
    \mathcal L^{(2)}[0,n]
    =
    \big\{ \set{0j,jn} : 0<j<n \big\}
    \cup
    \big\{\set{0j,in} : 0<i<j<n \big\},
\end{equation}
and where the set ${\mathcal C}(L)$ \emph{compatible} with
$L \in {\mathcal L}^{(2)}[0,n]$ is given:
\\
(i) for $L=\{0j,jn\}$, by all pairs $kl$
with $0 \le k < l \le n$ except $0l$ with $l>j$ and $kn$ with
$k<j$;
\\
(ii) for $L=\{0j,in\}$ with $i<j$,
by all pairs $kl$ excepting both $0l$ with $l>j$ and $kn$ with
$k<i$.
\\
For more details, see \cite{HS90b} or \cite{Hara08,Slad06}.

\subsection{Lace expansion for lattice animals}

The two-point function
$\GG{5}(x)=\sum_{A\ni 0,x}\z{4}^{|A|}$ for lattice animals
was defined in \eqref{e:2ptfcndef}
as the sum over lattice animals that contain both vertices
0 and $x$. An animal $A$ with this characteristic contains a path
connecting $0$ to $x$; however, unlike the lattice tree case, this
path is not necessarily unique. To deal with this we use
the following definitions.

Let $A$ be an animal containing the vertices $x$ and $y$.
We say that
$A$ has a \emph{double connection from $x$ to $y$} if
  there are two bond-disjoint self avoiding walks in $A$ between
  $x$ and $y$ (the walks may share a common vertex, but not a common bond),
  or if $x=y$. The set of all animals having a double connection
  between $x$ and $y$ is denoted by $\mathcal D_{x,y}$.
A bond $\set{x,y}$ in $A$ is \emph{pivotal} for the connection
 from $x$ to $y$, if its removal would disconnect the animal into
 two connected components with $x$ contained in one of them and
 $y$ in the other.

An animal $A\ni x,y$ that is not an element of $\mathcal D_{x,y}$
has at least one pivotal bond for the connection from $x$ to $y$.
To establish an order among these
edges, we define the {\it first} pivotal bond to be the unique bond
for which there is a double connection between $x$ and one of the
endpoints of this bond. This endpoint
is the {\it first} endpoint of the first pivotal bond. To determine
the  {\it second} pivotal bond, the role of $x$ is played by the
second endpoint of the first pivotal
bond, and so on.

For a lattice animal $A$ that contains $x$ and $y$, the {\it backbone}
is the ordered set of
oriented pivotal bonds for the connection from $x$ to $y$.
The backbone is not necessarily connected. The {\it ribs} are
the connected components that remain after the bonds in the
backbone (but not the vertices) are removed
from $A$. By definition, the ribs are doubly connected between
the corresponding backbone vertices, and are mutually avoiding.
See Figure~\ref{fig:animal_br} for an example.
\begin{figure}[!h]
 \centering
 \includegraphics[scale=.5]{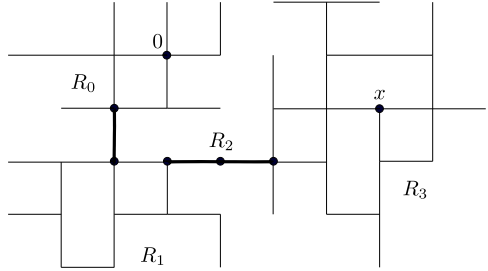}
 \caption[Decomposition of a lattice animal]
 {Decomposition of a lattice animal $A$ into the backbone
 from 0 to $x$ (bold), and the ribs
 $\vec R=\set{R_0,R_1,R_2,R_{3} }$.
 The rib $R_2$ consists only of the vertex in the backbone.}
 \label{fig:animal_br}
\end{figure}

Let $B$ be an arbitrary finite ordered set of directed bonds
\ecug{
    B   =   \ob{ \ob{u_1,v_1},\dots, \ob{u_{|B|},v_{|B|}} },
}
and let $v_0=0$ and $u_{|B|+1}=x$.
Then we can regard the two-point function as a sum over
the backbone $B$ and mutually nonintersecting ribs
$\vec R=\set{R_0,\dots,R_{|B|}}$.
It is shown in \cite{HS90b} how to apply inclusion exclusion to
obtain an identity
\begin{equation}
    G_z^{(a)}(x) = \delta_{0,x} g^{(a)}(z)
    + \Pi_z^{(a)}(x) + g^{(a)}(z)(2dz D*G_z^{(a)})(x)
    + (\Pi_z^{(a)} * 2dzD*G_z^{(a)})(x),
\end{equation}
with $\Pi_z^{(a)}$ given by the alternating series
\begin{equation}
    \hat\Pi_z^{(a)} = \sum_{N=0}^\infty (-1)^N \hat\Pi_z^{(a,N)}.
\end{equation}
It is known  (see \cite{Hara08})
that there is a $c>0$ such that for all $N \ge 0$ and all $z \in [0,z_c]$,
\begin{equation}
\label{e:HSPibda}
    0 \le \hat\Pi_{z}^{(a,N)} \le c^N d^{-(N \vee 1)}
\end{equation}
and this implies that the only terms that can
contribute to \eqref{e:Pia-2terms} for lattice animals are those with
$N=0,1,2$.

The following explicit formulas are obtained in \cite{HS90b}.
First,
\ecu{eq:Pi0}{
    \PIN{5}{0}(x)
    =  \ob{ 1- \delta_{0,x} }\sum_{R\in \mathcal D_{0,x}}\z{4}^{|R|}.
}
With $\U_{ij}(\vec R)$ as in \eqref{eq:Uij} but for the new
notion of ribs $\vec R$,
\ecus{LA_Pi_uno}{
    \PIN{5}{1}(x)
    &=  \sum_{B:|B|\ge1} \z{4}^{|B|}
    \sqb{ \prod_{k=0}^{|B|}\quad
    \sum_{R_k\in\mathcal D_{v_k,u_{k+1}}}\z{4}^{|R_k|} }
    (-\U_{0,|B|})
    \prod_{0\leq i<j\leq |B|\atop(i,j)\neq(0,|B|)}\ob{1+\U_{ij}},
}
with $v_0=0$ and $u_{|B|+1}=x$.
The factor $\U_{0,|B|}$ in the previous expression forces an
intersection between the first and last ribs,
and the last product forbids all other rib intersections.
Finally,
\ecu{eq:Pia2}{
    \PIN{5}{2}(x)
    = \sum_{B:|B|\ge1} \z{4}^{|B|}
    \sqb{ \prod_{k=0}^{|B|}\quad
    \sum_{R_k\in\mathcal D_{v_k,u_{k+1}}}\z{4}^{|R_k|} }
    \sum_{L \in\mathcal L ^{(2)}[0,|B|]} \,
    \prod_{ij\in L}\U_{ij}\prod_{i'j'\in \mathcal C(L)}
    \ob{1+ \U_{i'j'} }   ,
}
with $\mathcal L ^{(2)}$ and $ \mathcal C(L)$ as defined around
\eqref{e:Lcal2}.

\section{Fourier estimates}
\label{sec:Fourier}

In this section,
we formulate
an essential ingredient for the error estimates in
Theorem~\ref{thm:1}, in Lemma~\ref{lem:DmGn} below.
The proof is based on the Fourier transform.

The {\it Fourier transform} of an absolutely summable
function $f:\Zd\to\C$ is defined by
\ecu{eq:FT}{
    \hat f(k)=\sum_{x\in\Zd}f(x)\ee^{ik\cdot x},
}
where $k\in [-\pi,\pi]^d$ and $k\cdot x=\sum_{j=1}^d k_jx_j$.
For example, the transition probability $D$ of \eqref{eq:D}
has Fourier transform $\hat D(k) = d^{-1}\sum_{j=1}^d \cos k_j$.
The {\it inverse Fourier transform}, which recovers $f$ from $\hat f$,
is given by
\ecu{eq:IFT}{
    f(x)=\int_{[-\pi,\pi]^d} \hat f(k) \ee^{-ik\cdot x}\frac{\D k}{(2\pi)^d}.
}
Recall that the convolution of the functions $f$ and $g$ was
defined in \eqref{eq:conv}. We denote by $f^{*l}$ the
convolution of $l$ factors of $f$,
i.e.,
\ecug{
    f^{*l}(x)=\underbrace{(f*f*\cdots *f)}_{l}(x).
}
The Fourier transform of a convolution
is the product of Fourier transforms:
$\widehat{f*g}=\hat f\hat g$.

In this notation, $D^{*l}(x)$ is the $l$-step transition probability
that simple random walk travels from $0$ to $x$ in $l$ steps.
We take $f=D^{*2m}$ and $x=0$ in \eqref{eq:IFT} to obtain
\begin{equation}
    D^{*2m}(0)
    =
    \int_{[-\pi,\pi]^d} \hat D(k)^{2m} \frac{\D k}{(2\pi)^d}.
\end{equation}
A proof of the elementary fact that $D^{*2m}(0) \le C_m(2d)^{-m}$ for
some constant $C_m$ (uniformly in $d$) can be found in
\cite[(3.12)]{HS06}.
Therefore,
\ecu{eq:D_bound}{
    \int_{[-\pi,\pi]^d} \hat D(k)^{2m} \frac{\D k}{(2\pi)^d}
    \leq    \frac{C_m}{(2d)^m}.
}

The {\it infrared bound} for nearest-neighbour lattice trees and
lattice animals, given in \cite[(1.25)]{Hara08}, states
that for dimensions $d\ge d_0$ (for some sufficiently large $d_0$),
there is a positive constant $c$ independent of $\z{4}$ and $d$,
such that for $0\le \z{4}\le z_c$,
\ecu{eq:infrared}{
    0 \leq \FG{6}(k) \leq \frac{cd}{|k|^2},
}
where $|k|=\ob{k_1^2+\dots+k_d^2}^{1/2}$.
The definition of $ \FG{6}(k)$ requires some care when $z=z_c$,
because $G_{z_c}(x)$ is not summable.  Nevertheless it is possible
to define $\hat{G}_{z_c}(k)$ in a natural way such that its inverse Fourier
transform is $G_{z_c}(x)$.  The subtleties associated with this
point are discussed in \cite[Appendix~A]{Hara08}.

Let $i$ be a non-negative integer and let $C$ be a cluster
(a tree or an animal) containing the vertices $x$ and $y$. We denote by
\ecu{eq:xy_k}{
    \{ x\underset{i}{\flechas}y \}
}
the event that there exists a self-avoiding path in $C$,
of length at least $i$, connecting  $x$ and $y$.
We define
the two-point function
for clusters in which $x$ is connected to $y$ by a path
of length at least $i$ by
\ecu{eq:Gk}{
    \Gk{6}{i} (x) =\sum_{ C\ni 0,x \; : \;
    0\underset{i}{\flechas} x } \z{4}^{|C|}.
}
Then
\ecug{
    \GG{6}(x) =\Gk{6}{0}(x)=\g{6}\delta_{0,x}+\Gk{6}{1}(x),
}
since for $x=0$ the two-point function $\GG{6}(x)$ reduces
to the one-point function $\g{6}$, and for $x\not = 0$ a path
connecting these two vertices requires at least one step.

For integers  $m,n \ge 1$, and vertices $x,y \in \Zd$,
we define
\ecu{eq:Smn}{
    \SQ{2}{m}{n}(x)   =\sum_{ i_1+\dots +i_n=m }
    (\Gk{6}{i_1}*\cdots *\Gk{6}{i_n})(x)
    ,
}
where the sum is over nonnegative integers $i_1,\dots, i_n$.
Let
\ecu{eq:Smn_def}{
    \SQ{2}{m}{n}    =\sup_{x \in \Zd} \SQ{2}{m}{n}(x).
}
The statement and proof of the following lemma are closely
related to \cite[Lemma~3.1]{HS06}.

\begin{lemma}\label{lem:DmGn}
Let $m$ and $n$ be non-negative integers and
let $d> \max\set{d_0, 4n}$.  There is a
constant $C_{m,n}$, whose value depends only on $m$ and $n$,
such that
\begin{align}
\label{eq:Smn_bound}
    S^{(m,n)}_{z_c}
    &\leq \frac{C_{m,n}}{(2d)^{m/2}}.
\end{align}
\end{lemma}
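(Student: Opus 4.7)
The plan is to convert the $n$-fold convolution defining $S^{(m,n)}_{z_c}$ into a product via the Fourier transform, separate the random-walk factors from the two-point-function factors using Cauchy--Schwarz, and bound each separately. The random-walk integral will supply the $(2d)^{-m/2}$ decay via \eqref{eq:D_bound}, while the two-point-function integral will be controlled uniformly in $d$ using the infrared bound \eqref{eq:infrared} together with the hypothesis $d>4n$.

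First I would establish the pointwise domination $G^{(i)}_{z_c}(x) \leq C^i\,(D^{*i}*G_{z_c})(x)$, with $C$ independent of $d$ and $i$, for every $x\in\Zd$ and $i\ge 0$. The idea is to extract the first $i$ bonds of a canonically chosen self-avoiding path $\omega$ of length at least $i$ in each cluster contributing to $G^{(i)}_{z_c}(x)$; bounding each of the $i$ ribs attached along $\omega$ by $g_c$ and the remaining sub-cluster (which contains the endpoint $\omega(i)$ and $x$) by $G_{z_c}$, and using $2dz_cg_c\le 2$ from \eqref{e:step0}, produces a constant of the form $C=2$. For trees $\omega$ is the unique backbone; for animals an extra argument using the lace-expansion decomposition into pivotal bonds and doubly-connected ribs is required, but the final form of the bound is the same. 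Substituting into the definition of $S^{(m,n)}_{z_c}$ and commuting convolutions yields $S^{(m,n)}_{z_c}(x) \leq \binom{m+n-1}{n-1}\,C^m\,(D^{*m}*G_{z_c}^{*n})(x)$. Taking $\sup_x$ via the Fourier inversion bound $\sup_x|f(x)|\le (2\pi)^{-d}\int|\hat f(k)|\,dk$ and applying Cauchy--Schwarz reduces the problem to controlling
$$
\left(\int_{[-\pi,\pi]^d} \hat D(k)^{2m}\,\frac{dk}{(2\pi)^d}\right)^{1/2}
\left(\int_{[-\pi,\pi]^d} \hat G_{z_c}(k)^{2n}\,\frac{dk}{(2\pi)^d}\right)^{1/2},
$$
and the first factor equals $D^{*2m}(0)^{1/2} \leq \sqrt{C_m}\,(2d)^{-m/2}$ by \eqref{eq:D_bound}, supplying the required power of $d$.

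The main obstacle is to bound the second factor by a constant depending only on $n$, uniformly in $d$ for $d>4n$. A direct application of \eqref{eq:infrared} produces $(cd)^{2n}\int|k|^{-4n}\,dk/(2\pi)^d$, whose contribution from $|k|\geq 1$ contains an unwanted factor $(cd)^{2n}$ that grows with $d$. The resolution is to use the elementary lower bound $1-\hat D(k)\geq 2|k|^2/(\pi^2 d)$ to recast the infrared bound as $\hat G_{z_c}(k)\leq c'/(1-\hat D(k))$ with $c'$ independent of $d$; the integral then becomes $(c')^{2n}\,G_{\mathrm{SRW}}^{*2n}(0) = (c')^{2n}\sum_{j\ge 0}\binom{2j+2n-1}{2n-1}D^{*2j}(0)$, which by \eqref{eq:D_bound} converges to a constant depending only on $n$ for $d>4n$ (and in fact tends to $1$ as $d\to\infty$). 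A minor additional subtlety is that $G_{z_c}$ is not summable, so $\hat G_{z_c}$ must be handled with care: I would prove the bound first for $z<z_c$, where all integrals are manifestly finite, and pass to the limit $z\uparrow z_c$ by monotone convergence, as in \cite[Appendix~A]{Hara08}.
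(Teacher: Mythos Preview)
Your overall architecture matches the paper exactly: dominate $G_{z_c}^{(i)}$ by $(2dz_cg_c)^i(D^{*i}*G_{z_c})$, reduce to bounding $\sup_x(D^{*m}*G_{z_c}^{*n})(x)$, Fourier-invert, apply Cauchy--Schwarz, and treat the two resulting integrals separately; the $D$-integral is handled just as you say via \eqref{eq:D_bound}.

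Where you diverge is in the second integral, and your argument there has a gap. The ``obstacle'' you identify---that a direct use of \eqref{eq:infrared} leaves an uncontrolled factor $(cd)^{2n}$---is not real: the paper applies the infrared bound directly and proves that
\[
I_{d,n}=\int_{[-\pi,\pi]^d}\frac{d^{2n}}{|k|^{4n}}\,\frac{\D k}{(2\pi)^d}
\]
is \emph{monotone nonincreasing in $d$} (hence bounded by its value at the first $d>4n$). The trick is the representation $A^{-j}=\Gamma(j)^{-1}\int_0^\infty u^{j-1}\ee^{-uA}\,\D u$ with $A=|k|^2/d$: after Fubini the $k$-integral factorises as a $d^{\rm th}$ power, and the monotonicity reduces to $\|f\|_{1/(d+1)}\le\|f\|_{1/d}$ on the probability space $([-\pi,\pi],\D t/2\pi)$.

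Your detour through $\hat G_{z_c}(k)\le c'/(1-\hat D(k))$ is legitimate in principle, but your appeal to \eqref{eq:D_bound} to control $\sum_{j\ge 0}\binom{2j+2n-1}{2n-1}D^{*2j}(0)$ uniformly in $d$ does not work: the constants $C_j$ in \eqref{eq:D_bound} are not uniform in $j$ (already for $d=1$ one needs $C_j\ge\binom{2j}{j}2^{-j}\sim 2^j/\sqrt{\pi j}$), so the majorising series you would obtain diverges. To salvage your route you would still need a uniform-in-$d$ bound on $\int(1-\hat D)^{-2n}$, and that requires essentially the same monotonicity argument as the paper's, with $|k|^2/d$ replaced by $d^{-1}\sum_j(1-\cos k_j)$; there is no shortcut here.
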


\begin{proof}
We first prove that there is a constant $K_{m,n}$ such that
\begin{align}
\label{eq:DG_bound}
      \sup_{x} \left( D^{*m}* G_{z_c}^{*n} \right)(x)
      &\leq \frac{K_{m,n}}{(2d)^{m/2}}.
\end{align}
Using the inverse Fourier transform \eqref{eq:IFT}
and $\widehat{f*g}=\hat f\hat g$, we have
\ecusg{
    (D^{*m}* G_{z_c}^{*n}) (x)
    &= \int_{[-\pi,\pi]^d} \hat D(k)^{m} \hat G_{z_c}(k)^{n}
    \ee^{-ik\cdot x}\frac{\D k}{(2\pi)^d}.
}
By the Cauchy--Schwarz inequality,
\ecus{eq:D_G}{
    (D^{*m}* G_{z_c}^{*n}) (x)
    &\leq \ob{ \int_{[-\pi,\pi]^d} \hat D(k)^{2m}
    \frac{\D k}{(2\pi)^d} }^{1/2}
    \ob{ \int_{[-\pi,\pi]^d} \hat G_{z_c}(k)^{2n} \frac{\D k}{(2\pi)^d} }^{1/2}.
}
Then \eqref{eq:D_bound}
gives \eqref{eq:DG_bound}, once we show that the second factor on
the right-hand side of \eqref{eq:D_G} is bounded uniformly in large $d$.
By \eqref{eq:infrared}, it suffices to verify that the integral
\begin{equation}
    I_{d,n} =
     \int_{[-\pi,\pi]^d}\frac{d^{2n}}{|k|^{4n}}
  \frac{\D k}{(2\pi)^d},
\end{equation}
which is finite for $d>4n$, is monotone nonincreasing in $d$.

This monotonicity has been encountered many times previously
in the literature (e.g., \cite{HS06}),
and can be proved as follows.
For $A>0$ and $j>0$, a change of variables in the integral leads to
\begin{equation}
    \frac1{A^j} =\frac1{\Gamma(j)}\int_0^\infty u^{j-1}\ee^{-uA}\D u.
\end{equation}
We apply this identity with $A=d^{-1}|k|^2$ and $j=2n$,
and then use Fubini's theorem to obtain
\begin{align}
    I_{d,n}
    &=\frac1{\Gamma(2n)}\int_{[-\pi,\pi]^d}
    \int_0^\infty u^{2n-1}\ee^{-u|k|^2/d}\D u
    \frac{\D k}{(2\pi)^d}
    \nonumber \\
    &=\frac1{\Gamma(2n)} \int_0^\infty u^{2n-1}
    \ob{ \int_{-\pi}^\pi \ee^{-ut^2 /d}\frac{\D t}{2\pi} }^d \D u
    =\frac1{\Gamma(2n)} \int_0^\infty u^{2n-1}
    \| f_u\|_{1/d} \; \D u,
\end{align}
where $f_u(t)= \ee^{-ut^2}$ and $\|f\|_p
= (\int_{-\pi}^\pi f(t)^p \, \D t/2\pi)^{1/p}$.
Since $\D t/2\pi$ is a probability measure on $[-\pi,\pi]$,
\begin{equation}
    \norma{f}_{1/(d+1)} \leq \norma{f}_{1/d}.
\end{equation}
Therefore, as required, $I_{d+1,n} \le I_{d,n}$, and the proof
of \eqref{eq:DG_bound} is complete.

Turning now to \eqref{eq:Smn_bound}, we first consider the
case of lattice trees.  In
\eqref{eq:Gk}, if we neglect the self-avoidance restriction
among the first $i$ steps in the path connecting $x$ and $y$,
and treat the first $i$ ribs as independent
of each other and of the subtree that comes after the
$i^{\rm th}$ step, we
obtain the upper bound
\ecu{eq:Gk_bound_DG}{
    G_{z}^{(i)}(x)  \leq  (2d\z{4}\g{6})^i   (D^{*i}*G_{z})(x).
}
For the case of lattice animals, the same bound is plausible
and indeed also holds; this
can be seen using a small modification in the proof of \cite[Lemma~2.1]{HS90b}.
With the definition of $\SQ{2}{m}{n}(x)$ in \eqref{eq:Smn}, this
implies that for either model
\begin{align}
    \SQ{2}{m}{n}(x) &=\sum_{i_1+\dots +i_n=m}
    (G_{z}^{(i_1)}*\cdots *G_{z}^{(i_n)})(x)
    \le \tilde C_{m,n} \ob{ 2d\z{4}\g{6}}^m (D^{*m}*G_{z}^{*n})(x),
\end{align}
where $\tilde C_{m,n}$ is the number of terms in the sum
(its exact value is unimportant).
By \eqref{e:step0}, $2d\z{3}g_c  \leq  2$ for $d$ large enough.
Together with \eqref{eq:DG_bound}, this implies that
\ecusg{
    S^{(m,n)}_{z_c}(x) &
    \leq \tilde C_{m,n} 2^{m}  \frac{K_{m,n}}{(2d)^{m/2}},
}
and the proof is complete.
\end{proof}

\section{First term}
\label{sec:term1}

In this section, we apply \eqref{e:step0}
to compute the leading behaviour \eqref{e:zcleading}
for $g_c$ and $z_c$.  This provides an alternate approach to that used
in \cite{MS11} to reach the same conclusion, and makes our proof
of Theorem~\ref{thm:1} more self-contained.
The following lemma provides some preliminary bounds.

\begin{lemma}
For $s$ a neighbour of the origin,
\begin{align}
\label{e:Gstart}
    G_{z_c}(s) & = o(1),
\\
\label{e:rcbd}
    2dr_c & = 1+o(1),
\\
\label{e:gcirc0}
    g_\circ(z_c) &= O(2d)^{-2}.
\end{align}
\end{lemma}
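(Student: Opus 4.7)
The plan is to dispatch each of the three bounds in sequence, in each case relying on the starting estimates \eqref{e:step0}, the infrared-based bounds on $\hat\Pi_{z_c}$ recalled after \eqref{e:step0}, and Lemma~\ref{lem:DmGn}. All three are quite short once the right observation is made.

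For \eqref{e:Gstart}: since $s \ne 0$, any self-avoiding path from $0$ to $s$ in a cluster has length at least $1$, so by \eqref{eq:Gk} we have $G_{z_c}(s) = G_{z_c}^{(1)}(s)$. Noting that the definition \eqref{eq:Smn} gives $S^{(1,1)}_{z_c}(x) = G_{z_c}^{(1)}(x)$, Lemma~\ref{lem:DmGn} applied with $m=n=1$ (valid for $d > \max\{d_0, 4\}$) yields $G_{z_c}(s) \le S^{(1,1)}_{z_c} \le C_{1,1}(2d)^{-1/2} = o(1)$.

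For \eqref{e:rcbd}: substitute the previous bound and $\hat\Pi_{z_c} = O((2d)^{-1})$ (from summing \eqref{e:HSPibds}--\eqref{e:HSPibda} as geometric series in $N$) into the identity \eqref{e:rPiG},
\begin{equation*}
2dr_c = 1 - 2dz_c\hat\Pi_{z_c} - 2dz_c G_{z_c}(s).
\end{equation*}
Since $2dz_c = O(1)$ by \eqref{e:zcbd0}, both correction terms are $o(1)$, proving \eqref{e:rcbd}.

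The main step is \eqref{e:gcirc0}. I would use the elementary overcounting
\begin{equation*}
g_\circ(z_c) \le \1_{\rm a} \sum_{\gamma \ni 0} \sum_{A \supset \gamma} z_c^{|A|},
\end{equation*}
where the outer sum runs over self-avoiding cycles $\gamma$ passing through the origin: if $0$ lies in a cycle of $A$ then $A$ contains at least one such $\gamma$. For a fixed $\gamma$ of length $2k$ (even, since $\Zd$ is bipartite, and $k \ge 2$), decomposing $A$ into $\gamma$ together with the ribs at each of its $2k$ vertices and treating the ribs as independent one-point functions (an upper bound) gives $\sum_{A \supset \gamma}z_c^{|A|} \le z_c^{|\gamma|} g_c^{|\gamma|} = (z_c g_c)^{2k}$. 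The number of self-avoiding cycles of length $2k$ through $0$ is bounded by half the number of closed simple-random-walk paths of length $2k$ from $0$, which by \eqref{eq:D_bound} is at most $\tfrac12 (2d)^{2k}D^{*2k}(0) \le \tfrac12 C_k (2d)^k$. Combining with $z_c g_c = O((2d)^{-1})$ from \eqref{e:step0} yields a geometrically convergent series dominated by its $k=2$ term,
\begin{equation*}
g_\circ(z_c) \le \sum_{k=2}^\infty \tfrac12 C_k (2d)^k \cdot O((2d)^{-2k}) = O((2d)^{-2}),
\end{equation*}
provided $d$ is large enough that the ratio in consecutive terms is less than one.

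The main obstacle is in \eqref{e:gcirc0}: the naive count $(2d)^{2k-1}$ for self-avoiding cycles of length $2k$ through the origin would only deliver $O((2d)^{-1})$, missing the target by a full factor of $2d$. The essential ingredient is the random-walk return-probability estimate \eqref{eq:D_bound}, which sharpens the cycle count to $O((2d)^k)$ and so gives the required $(2d)^{-2}$ decay from the $k=2$ contribution, with higher $k$ contributing lower order corrections.
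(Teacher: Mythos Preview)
Your proofs of \eqref{e:Gstart} and \eqref{e:rcbd} are correct and essentially identical to the paper's (the paper uses \eqref{e:rgG} rather than \eqref{e:rPiG} for \eqref{e:rcbd}, but this is the same identity combined with \eqref{e:zc}).

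Your argument for \eqref{e:gcirc0} has a genuine gap. The problem is the claim that the series $\sum_{k\ge 2}\tfrac12 C_k(2d)^k\cdot O((2d)^{-2k})$ is geometrically convergent for large $d$. The constants $C_k$ in $D^{*2k}(0)\le C_k(2d)^{-k}$ are uniform in $d$ but grow rapidly in $k$: a direct count of closed $2k$-step walks shows $(2d)^{2k}D^{*2k}(0)\sim(2k-1)!!\,(2d)^k$ as $d\to\infty$ (the dominant contribution comes from walks using $k$ distinct coordinate directions once each), so the best uniform constants satisfy $C_k\ge(2k-1)!!$. The ratio of consecutive terms in your series therefore behaves like $(2k+1)(2dz_cg_c)^2/(2d)$, which for any fixed $d$ exceeds $1$ once $k$ is of order $d$, and the series diverges. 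Equivalently, your overcounting $\sum_{\gamma\ni 0}\sum_{A\supset\gamma}z_c^{|A|}=\sum_{A\ni 0}z_c^{|A|}\cdot\#\{\gamma:0\in\gamma\subset A\}$ weights each animal by its number of cycles through the origin, which can be exponential in $|A|$; there is no reason for this double sum to be finite at $z=z_c$.

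The paper avoids this by applying Lemma~\ref{lem:DmGn} once rather than summing over cycle lengths. An animal with $0$ in a cycle satisfies $0\underset{4}{\leftrightarrow}0$ in the sense of \eqref{eq:xy_k}, so $g_\circ(z_c)\le G^{(4)}_{z_c}(0)\le S^{(4,1)}_{z_c}\le C_{4,1}(2d)^{-2}$. The infrared bound \eqref{eq:infrared}, hidden inside Lemma~\ref{lem:DmGn}, is what controls the contribution of long cycles; the return-probability estimate \eqref{eq:D_bound} alone is not enough.
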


\begin{proof}
Since a lattice tree or lattice animal containing $0$ and $s$
must contain a path of length at least $1$ joining those vertices,
we have $G_{z_c}(s) \le S_{z_c}^{(1,1)} \le O(2d)^{-1/2}$, where the
last inequality follows from
Lemma~\ref{lem:DmGn}.  This proves \eqref{e:Gstart}.

The limit \eqref{e:rcbd} follows from the identity
$2dr_c  =2dz_c g_c -2dz_c G_{z_c}(s)$ of \eqref{e:rgG}, together
with \eqref{e:step0}--\eqref{e:zcbd0} and \eqref{e:Gstart}.

Finally, since the minimal length of a cycle containing the origin
in a lattice animal is $4$, it follows that $g_\circ(z_c) \le S^{(4,1)}_{z_c}$,
and then \eqref{e:gcirc0} is a consequence of Lemma~\ref{lem:DmGn}.
\end{proof}

\begin{lemma}
\label{lem:zcleading}
For lattice trees or lattice animals, $g_c = \ee +o(1)$ and
$z_c = (2d\ee)^{-1} + o(2d)^{-1}$.
\end{lemma}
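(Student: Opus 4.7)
The plan is to combine Theorem~\ref{thm:gstart} with the preliminary bound $2dr_c = 1 + o(1)$ from \eqref{e:rcbd}, reduce to a clean evaluation of the bracketed expression at leading order, and then back out $z_c$ from $2dz_cg_c = 1+o(1)$.

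First I would observe that \eqref{e:rcbd} immediately gives $r_c = (2d)^{-1}(1+o(1))$, so that $r_c = O(2d)^{-1}$. Plugging into the bracketed quantity in Theorem~\ref{thm:gstart}, each correction term is small: $(2d)r_c^2 = (2d)^{-1}(2dr_c)^2 = O(2d)^{-1}$, $(2d)^2 r_c^4 = \bigl((2d)r_c^2\bigr)^2 = O(2d)^{-2}$, and the explicit remainder $\tfrac{7/6}{(2d)^2}$ is of course $O(2d)^{-2}$. Therefore the bracket equals $1 + o(1)$. Combining with $g_\circ(z_c) = O(2d)^{-2} = o(1)$ from \eqref{e:gcirc0} and $e^{2dr_c} = e^{1+o(1)} = \mathrm{e}\,(1+o(1))$, Theorem~\ref{thm:gstart} yields
\begin{equation*}
    g_c = \mathrm{e}(1+o(1))\bigl[1+o(1)\bigr] + o(1) = \mathrm{e} + o(1).
\end{equation*}

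Next I would extract $z_c$ from the identity $2dz_c g_c = 1 + o(1)$ of \eqref{e:step0}. Since $g_c = \mathrm{e} + o(1)$ is bounded away from $0$, we can invert to get
\begin{equation*}
    z_c = \frac{1+o(1)}{2d\,g_c} = \frac{1+o(1)}{2d(\mathrm{e}+o(1))} = \frac{1}{2d\mathrm{e}} + o(2d)^{-1},
\end{equation*}
which is the second half of \eqref{e:zcleading}.

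There is no real obstacle here: the entire content has been packaged into Theorem~\ref{thm:gstart} and into the preliminary estimates \eqref{e:step0}, \eqref{e:Gstart}--\eqref{e:gcirc0}. The only thing to verify carefully is that the hypotheses of Theorem~\ref{thm:gstart} are available at this stage of the bootstrap, namely that its proof depends only on the starting bounds \eqref{e:step0} and on Lemma~\ref{lem:DmGn}, which is the case by the comment immediately preceding \eqref{e:Zbd3}. The real work lies downstream, when these leading-order conclusions are fed back into Theorem~\ref{thm:gstart} to extract the next-order terms of $g_c$ and $z_c$ in Sections~\ref{sec:term2}--\ref{sec:term3}.
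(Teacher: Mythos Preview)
Your proof is correct and follows essentially the same approach as the paper's own proof, which simply says that $g_c=\ee+o(1)$ follows immediately from Theorem~\ref{thm:gstart} together with \eqref{e:rcbd}--\eqref{e:gcirc0}, and then $z_c$ is read off from \eqref{e:step0}. You have just spelled out the substitutions in more detail.
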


\begin{proof}
According to \eqref{e:step0}, it suffices to prove that $g_c =\ee + o(1)$,
and this follows immediately from
Theorem~\ref{thm:gstart} and \eqref{e:rcbd}--\eqref{e:gcirc0}.
\end{proof}

\section{Second term}
\label{sec:term2}

In this section, we compute the $(2d)^{-2}$ term in the expansion
for $z_c$ in \eqref{e:zca3}, and the $(2d)^{-1}$ term
in the expansion for $g_c$ in \eqref{e:ga}.
We follow the strategy discussed in Section~\ref{sec:pfstructure}:
we first compute the $(2d)^{-1}$ terms in the expansions
for $G_{z_c}(s)$ and
for $\hat\Pi_{z_c}$ in \eqref{e:G0x_2}--\eqref{e:Pia-2terms},
then use this to compute the desired term for $g_c$, and finally
obtain the desired term for $z_c$.

A useful quantity is
\begin{equation}
\label{e:Qdef}
    Q(x)=
    \sum_{C_0\ni 0}   \sum_{C_x \ni x}
    z_c^{|C_0|+|C_x|} \1_{C_0 \cap C_x \neq \varnothing},
\end{equation}
where the sum is over clusters (both trees or both animals)
containing $0$ and $x$, respectively.
It is shown in Lemma~\ref{lem:Q} that for $s$ a neighbour
of the origin, and for both lattice trees and lattice animals,
the leading behaviour
\begin{equation}
\label{e:Qleading}
    Q(s)=2z_c^2g_c^3+o(2d)^{-1} =  \frac{2\ee}{2d} + o(2d)^{-1}
\end{equation}
arises
from the presence of the bond $\{0,s\}$ in one of the two
clusters $C_0$ or $C_1$.
The proof of Lemma~\ref{lem:Q} uses only Lemmas~\ref{lem:zcleading}
and \ref{lem:DmGn}.

\begin{lemma}
\label{lem:G1}
For lattice trees or lattice animals, and for a neighbour $s$ of the origin,
\begin{equation}
    G_{z_c}(s) = \frac{\ee}{2d} + o(2d)^{-1}.
\end{equation}
\end{lemma}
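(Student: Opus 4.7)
The plan is to decompose $G_{z_c}(s)$ into two classes: (A) clusters in which the bond $\{0,s\}$ is a \emph{bridge} (its removal disconnects $0$ from $s$), and (B) clusters in which $0$ and $s$ admit a self-avoiding path of length at least $3$. These classes partition all clusters $C\ni 0,s$ for both models: in (A), removing the bridge disconnects $0$ from $s$, so no alternative path exists, and in particular no path of length $\ge 3$; conversely, a cluster with $\{0,s\}\in C$ but not a bridge, or with $\{0,s\}\notin C$, contains an alternative path, which by parity must have length at least $3$. Writing $B$ for the (A) contribution at $z_c$, this yields $G_{z_c}(s) = B + G_{z_c}^{(3)}(s)$.

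The remainder $G_{z_c}^{(3)}(s)$ is controlled by Lemma~\ref{lem:DmGn} applied with $m=3,\, n=1$, giving $G_{z_c}^{(3)}(s) \le S^{(3,1)}_{z_c} \le C_{3,1}(2d)^{-3/2} = o((2d)^{-1})$. For the main term, a class-(A) cluster $C$ decomposes uniquely as $C = \{\{0,s\}\}\cup R_0 \cup R_s$, where $R_0$ and $R_s$ are the connected components of $C\setminus\{\{0,s\}\}$ containing $0$ and $s$ respectively; these are vertex-disjoint. Conversely, any vertex-disjoint pair $(R_0,R_s)$ of clusters (of the same model) with $R_0\ni 0$ and $R_s\ni s$ determines such a $C$ bijectively. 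Hence
\begin{equation*}
    B \;=\; z_c \!\!\sum_{\substack{R_0\ni 0,\; R_s\ni s \\ R_0\cap R_s = \varnothing}}\!\! z_c^{|R_0|+|R_s|} \;=\; z_c\bigl(g_c^{\,2} - Q(s)\bigr),
\end{equation*}
where the last equality applies $\1_{R_0\cap R_s=\varnothing} = 1 - \1_{R_0\cap R_s\ne\varnothing}$ together with translation invariance (to identify the unrestricted sum with $g_c^2$) and the definition of $Q(s)$ in \eqref{e:Qdef}; the apparent restrictions $s\notin R_0$ and $0\notin R_s$ are automatic from disjointness.

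Combining, $G_{z_c}(s) = z_c\bigl[g_c^{\,2} - Q(s)\bigr] + o((2d)^{-1})$. From \eqref{e:step0} we have $z_c g_c = (1+o(1))/(2d)$, and Lemma~\ref{lem:zcleading} gives $g_c = \ee + o(1)$; multiplying yields $z_c g_c^{\,2} = \ee/(2d) + o((2d)^{-1})$. Using only the order bound $Q(s) = O((2d)^{-1})$, an immediate consequence of Lemma~\ref{lem:Q}, one has $z_c Q(s) = O((2d)^{-2}) = o((2d)^{-1})$, and hence $G_{z_c}(s) = \ee/(2d) + o((2d)^{-1})$. The main obstacle is ensuring that the bridge decomposition is uniform across both models; the subtle point is the animal case, where the bond may be present without being a bridge, but any such cluster necessarily admits a second path of length at least $3$ and is therefore absorbed cleanly into the $G_{z_c}^{(3)}(s)$ error.
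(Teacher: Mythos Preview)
Your proof is correct and uses the same ingredients as the paper's---the bound $G_{z_c}^{(3)}(s)\le S_{z_c}^{(3,1)}$ from Lemma~\ref{lem:DmGn}, the leading asymptotics $z_cg_c^2=\ee/(2d)+o(2d)^{-1}$, and the estimate $Q(s)=O(2d)^{-1}$ from Lemma~\ref{lem:Q}---but it is organised differently and in fact more cleanly. The paper argues via separate upper and lower bounds: for the upper bound it splits according to whether the bond $\{0,s\}$ is present (not whether it is a bridge), overcounting the ``present'' case by $z_cg_c^2$ and bounding the ``absent'' case by $G_{z_c}^{(3)}(s)$; for the lower bound it keeps only clusters in which $\{0,s\}$ is present and not in a cycle, obtaining $z_c(g_c^2-Q(s))$ by inclusion--exclusion. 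Your bridge/non-bridge dichotomy instead yields the \emph{exact} identity
\[
    G_{z_c}(s)=z_c\bigl(g_c^2-Q(s)\bigr)+G_{z_c}^{(3)}(s),
\]
so that no separate matching of bounds is needed. The bijection you describe between bridge-type clusters and vertex-disjoint pairs $(R_0,R_s)$ is sound for both trees and animals, and your observation that a non-bridge occurrence of $\{0,s\}$ in an animal forces an alternative path of length $\ge 3$ (hence absorption into $G_{z_c}^{(3)}(s)$) is exactly the point that makes the partition work uniformly.
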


\begin{proof}
For a lattice tree or lattice animal containing $0$ and $s$,
either the bond $\{0,s\}$ is occupied or it is not.
In the latter case, there must be an occupied path connecting 0 and $s$
of length at least 3.
In the former case, we overcount with independent clusters
at $0$ and $s$.  This gives
\begin{equation}
    G_{z_c}(s) \le z_c g_c^2 + G_{z_c}^{(3)}(s)
    \le z_c g_c^2 + S_{z_c}^{(3,1)},
\end{equation}
where the last inequality comes from \eqref{eq:Smn}--\eqref{eq:Smn_def}.
By Lemmas~\ref{lem:zcleading} and \ref{lem:DmGn}, it follows that
\begin{equation}
    G_{z_c}(s) \le \frac{\ee}{2d} + o(2d)^{-1}.
\end{equation}
For a lower bound, we
consider only the case where the edge
$\{0,s\}$ is occupied and not part of a cycle (for lattice animals).
It follows from inclusion exclusion
that
\begin{equation}
    G_{z_c}(s) \ge z_c g_c^2
    - z_c Q(s),
\end{equation}
and it then follows from \eqref{e:Qleading} and Lemma~\ref{lem:zcleading} that
\begin{equation}
    G_{z_c}(s) \ge \frac{\ee}{2d} + o(2d)^{-1}.
\end{equation}
This completes the proof.
\end{proof}

\begin{lemma}
\label{lem:Pi1}
For lattice trees or lattice animals,
\begin{equation}
    \hat\Pi_{z_c} = -\frac{3\ee}{2d} + o(2d)^{-1}.
\end{equation}
\end{lemma}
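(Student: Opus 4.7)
The strategy is to use the alternating lace-expansion series $\hat\Pi^{(t)}_{z}=\sum_{N\ge 1}(-1)^N\hat\Pi^{(t,N)}_{z}$ and $\hat\Pi^{(a)}_{z}=\sum_{N\ge 0}(-1)^N\hat\Pi^{(a,N)}_{z}$, together with the bounds \eqref{e:HSPibds}--\eqref{e:HSPibda}, which give $\hat\Pi^{(t,N)}_{z_c},\hat\Pi^{(a,N)}_{z_c}=O((2d)^{-\max(N,1)})$. This reduces the lemma to proving
\[
\hat\Pi^{(t,1)}_{z_c}=\hat\Pi^{(a,1)}_{z_c}=\frac{3\ee}{2d}+o\bigl((2d)^{-1}\bigr)
\quad\text{and}\quad
\hat\Pi^{(a,0)}_{z_c}=o\bigl((2d)^{-1}\bigr).
\]

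The main computation is for $\hat\Pi^{(t,1)}_{z_c}$, which I would split according to the backbone length $|\omega|$ in \eqref{Pi_uno}. When $|\omega|=1$ the compatibility product is empty, so the contribution reduces to $\sum_{s:\|s\|_1=1}z_c\,Q(s)=2d\,z_c\,Q(s)=\tfrac{2\ee}{2d}+o((2d)^{-1})$ by Lemma~\ref{lem:Q} and Lemma~\ref{lem:zcleading}. When $|\omega|=2$ the dominant contribution comes from the $2d$ returning walks $\omega=(0,y,0)$: for these $R_0$ and $R_2$ both contain $0$ so $-\U_{02}=1$ is automatic, and the leading value is $2d\,z_c^2\,g_c^3=\tfrac{\ee}{2d}+o((2d)^{-1})$. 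Dropping the avoidance factors $(1+\U_{01})(1+\U_{12})$ costs only $O\bigl(d\,z_c^2\,g_c\,Q(s)\bigr)=O((2d)^{-2})$. The non-returning length-two walks terminate at lattice distance two, where $Q(x)\le(G_{z_c}*G_{z_c})(x)=O((2d)^{-2})$, so they sum to $o((2d)^{-1})$.

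For $|\omega|=k\ge 3$, I would use $Q\le G_{z_c}*G_{z_c}$ together with the inequality $(D^{*k}*G_{z_c}^{*2})(0)\le K_{k,2}(2d)^{-k/2}$ from \eqref{eq:DG_bound}; combined with $(2d\,z_c\,g_c)^k=1+o(1)$ from Lemma~\ref{lem:zcleading}, this bounds the $|\omega|=k$ contribution by $O((2d)^{-k/2})=o((2d)^{-1})$. Summing over $k\ge 1$ gives $\hat\Pi^{(t,1)}_{z_c}=\tfrac{3\ee}{2d}+o((2d)^{-1})$. For lattice animals, the additional term $\hat\Pi^{(a,0)}_{z_c}$ from \eqref{eq:Pi0} is controlled by observing that every $R\in\mathcal D_{0,x}$ with $x\neq 0$ contains a cycle through the origin and hence at least four bonds; counting the $\Theta((2d)^2)$ four-cycles through $0$ and weighting by $z_c^4$ (with larger animals absorbed via \eqref{e:HSPibda}) gives $\hat\Pi^{(a,0)}_{z_c}=O((2d)^{-2})$. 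The analysis of $\hat\Pi^{(a,1)}_{z_c}$ from \eqref{LA_Pi_uno} parallels the tree case exactly: the doubly-connected rib condition $R_k\in\mathcal D_{v_k,u_{k+1}}$ admits single-vertex or simply-connected ribs at leading order, each contributing $g^{(a)}(z_c)=\ee+o(1)$, while ribs containing a genuine double connection with $u_1\neq 0$ or $v_{|B|}\neq x$ cost at least $z_c^4=O((2d)^{-4})$ and are subdominant. The same decomposition by backbone length then yields $\hat\Pi^{(a,1)}_{z_c}=\tfrac{3\ee}{2d}+o((2d)^{-1})$, so that $\hat\Pi^{(a)}_{z_c}=-\tfrac{3\ee}{2d}+o((2d)^{-1})$ as required.

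The principal difficulty is verifying rigorously that the inter-rib avoidance constraints produce only $(2d)^{-2}$ corrections, that non-returning $|\omega|=2$ walks and all $|\omega|\ge 3$ walks contribute only $o((2d)^{-1})$, and that the doubly-connected ribs for animals really reduce to their tree-case counterparts at leading order. Each of these reductions rests on Lemma~\ref{lem:DmGn} applied to the bound $Q(x)\le(G_{z_c}*G_{z_c})(x)$ (itself obtained by conditioning on a shared vertex), with Lemma~\ref{lem:zcleading} supplying the leading asymptotics $z_c\sim(2d\ee)^{-1}$ and $g_c\sim\ee$.
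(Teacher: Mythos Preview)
Your strategy and leading-order computation are correct, and your decomposition by backbone length $|\omega|$ is a legitimate alternative to the paper's decomposition by the endpoint $x$. The two organisations match: your $|\omega|=1$ contribution $2d\,z_c\,Q(s)=\tfrac{2\ee}{2d}+o((2d)^{-1})$ is the paper's $\sum_{s}\Pi^{(1)}_{z_c}(s)$, and your returning $|\omega|=2$ contribution $2d\,z_c^2\,g_c^3=\tfrac{\ee}{2d}+o((2d)^{-1})$ is the paper's $\Pi^{(1)}_{z_c}(0)$.

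The gap is in your control of the error terms. For the non-returning $|\omega|=2$ walks you assert $Q(x)\le(G_{z_c}*G_{z_c})(x)=O((2d)^{-2})$ when $\|x\|_1=2$, but Lemma~\ref{lem:DmGn} only gives $S^{(2,2)}_{z_c}=O((2d)^{-1})$, and the convolution $(G*G)(x)$ is not obviously smaller. More seriously, your treatment of $|\omega|=k\ge 3$ via the per-$k$ bound $(2d\,z_c\,g_c)^k\,g_c^{-1}\,(D^{*k}*G_{z_c}^{*2})(0)\le K_{k,2}(2d)^{-k/2}$ cannot be summed: from \eqref{e:zc} one has $2d\,z_c\,g_c=1-2d\,z_c\,\hat\Pi_{z_c}$, which is not known to be ${<}1$ at this stage, and the constants $K_{k,2}$ from \eqref{eq:DG_bound} are not uniform in $k$ (they grow like $\sqrt{C_k}$). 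So the series you write down is not shown to converge, let alone to be $o((2d)^{-1})$.

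The paper circumvents both issues by using the triangle-diagram bound \eqref{e:Pi1crude}: a nonzero contribution to $\Pi^{(1)}_{z_c}(x)$ forces three bond-disjoint paths $0\to x$, $0\to y$, $x\to y$, one of which \emph{is} the backbone, giving $\hat\Pi^{(1)}_{z_c}\le S^{(m,3)}_{z_c}=O((2d)^{-m/2})$ whenever those three paths have total length at least $m$. All configurations other than your two leading ones have $m\ge 4$, so a single application of Lemma~\ref{lem:DmGn} disposes of them as $O((2d)^{-2})$. Your bound $Q\le G*G$ keeps only two of the three paths and discards the backbone, which is exactly the information needed to make the error estimate go through; replacing it by the three-factor bound $S^{(m,3)}$ would close your argument.
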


\begin{proof}
It follows from \eqref{e:HSPibds} and \eqref{e:HSPibda}
that we need only consider the contributions due to $\hat\Pi_{z_c}^{(t,1)}$
for trees, and due to $\hat\Pi_{z_c}^{(a,0)}$ and $\hat\Pi_{z_c}^{(a,1)}$
for animals, since larger values of $N$ contribute $O(2d)^{-2}$.
Moreover, we can neglect $\hat\Pi_{z_c}^{(a,0)}$.
To see this, we recall the definition \eqref{eq:Pi0} and
apply the BK inequality of \cite[Lemma~2.1]{HS90b}
and Lemma~\ref{lem:DmGn} to see that
\begin{equation}
\label{e:Pia0}
    \hat\Pi_{z_c}^{(a,0)}
    \le \sum_{i+j =4} \sum_{x \in \Zd} G^{(i)}_{z_c}(x) G^{(j)}_{z_c}(x)
    =
    S_{z_c}^{(4,2)} \le O(2d)^{-2},
\end{equation}
where the restriction to $i+j=4$ arises because
only animals in which the origin is in a cycle of length at least 4 can occur.
Therefore, we can restrict attention to the case $N=1$.

By definition,
\begin{equation}\label{e:Pi1_decom}
    \hat\Pi_{z_c}^{(1)}
    =
    \Pi_{z_c}^{(1)}(0)
    +
    \sum_{s : \|s\|_1=1} \Pi_{z_c}^{(1)}(s)
    +
    \sum_{x : \|x\|_1 \ge 2} \Pi_{z_c}^{(1)}(x).
\end{equation}
A nonzero contribution to $\Pi^{(1)}_{z_c}(x)$
requires the existence of three bond-disjoint paths as indicated
in Figure~\ref{fig:Pi1} (with $y=0$ or $y=x$ allowed), to ensure that
$\U_{0|\omega|}=-1$ in \eqref{Pi_uno} or $\U_{0|B|}=-1$ in \eqref{LA_Pi_uno}.
This implies that
\begin{equation}
\label{e:Pi1crude}
    \hat\Pi^{(1)}_{z_c}
    \le \sum_{x,y \in \Zd}
    G_{z_c}(x)G_{z_c}(y)G_{z_c}(y-x) = S_{z_c}^{(0,3)}(0)
    \le S_{z_c}^{(0,3)};
\end{equation}
a detailed derivation of this estimate can  be found, e.g.,
in \cite[Theorem~8.2]{Slad06}.
The crude bound \eqref{e:Pi1crude}
can be greatly improved by replacing two-point
functions by factors $G^{(i)}_{z_c}$ when there must be
at least $i$ steps taken.  In this way, for contributions to
$\hat\Pi^{(1)}_{z_c}$ in which there must exist paths from $0$ to $x$,
from $0$ to $y$, and from $x$ to $y$, of total length at least $m$,
we can improve the upper bound $S_{z_c}^{(0,3)}$ to $S_{z_c}^{(m,3)}
\le O(2d)^{-m/2}$.
In particular, this implies that the
last sum on the right-hand side of \eqref{e:Pi1_decom}
is bounded by $S^{(4,3)}_{z_c} \le O(2d)^{-2}$ and thus is an error term.

\begin{figure}[h]
\centering
{
\includegraphics[scale=.09]{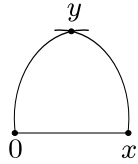}
}
\caption{Intersection required for a nonzero contribution to $\Pi^{(1)}_z(x)$.
\label{fig:Pi1}
}
\end{figure}

The leading behaviour arises from the other two terms.
We consider both trees and animals simultaneously.
Consider first the lower bound.
For $\Pi_{z_c}^{(1)}(0)$, we count only configurations with
backbone $(0,s,0)$ where $\|s\|_1=1$.  By using inclusion-exclusion
to account for the avoidance between the rib at $s$ and the two ribs
at $0$, we obtain
\begin{equation}
    \Pi_{z_c}^{(1)}(0)
    \ge
    2dz_c^2 (g_c^3 - 2g_cQ(s))
    =
    \frac{\ee}{2d} + o(2d)^{-1}
    ,
\end{equation}
by Lemma~\ref{lem:zcleading} and \eqref{e:Qleading}.  Similarly, by
considering the symmetric cases where either the rib at $0$
contains $\{0,s\}$ or the rib at $s$ contains $\{0,s\}$, we obtain
\begin{equation}
    \Pi_{z_c}^{(1)}(s)
    \ge
    2z_c^2 (g_c^3 - g_cQ(s)),
\end{equation}
and hence
\begin{equation}
    \sum_{s : \|s\|_1=1} \Pi_{z_c}^{(1)}(s)
     \ge
    \frac{2\ee}{2d} + o(2d)^{-1}
    .
\end{equation}
Altogether, this gives
\begin{equation}
    \hat\Pi_{z_c}^{(1)}
    \ge
    \frac{3\ee}{2d} + o(2d)^{-1}.
\end{equation}

For the upper bound,
excepting the configurations which contributed the leading behaviour to the
lower bound, the remaining configurations that contribute to
$\hat\Pi_{z_c}^{(1)}$ all contain three paths of total length at least 4,
and hence are bounded above by $S_{z_c}^{(4,3)}\leq O(2d)^{-2}$.  This completes the proof.
\end{proof}

\begin{lemma}
\label{lem:g2}
For lattice trees or lattice animals,
\begin{align}
\label{e:gct2}
    g_c &= \ee \left[1 +\frac{\frac 32}{2d} \right] + o(2d)^{-1}
    ,
    \\
\label{e:zct2}
    z_c    &=\ee^{-1}\left[ \frac{1}{2d}+\frac{\frac{3}{2}}{(2d)^2}
     \right] +   o(2d)^{-2}
    .
\end{align}
\end{lemma}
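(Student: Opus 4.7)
The plan is to execute, at this order of accuracy, the three recursion steps summarised in Figure~\ref{flowchart}, using Lemmas~\ref{lem:G1}--\ref{lem:Pi1} as inputs and the identities \eqref{e:rPiG}, Theorem~\ref{thm:gstart}, and \eqref{e:zc-bis} as the structural vehicles.

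First, I would use \eqref{e:rPiG} to determine $2dr_c$ to order $(2d)^{-1}$. From Lemma~\ref{lem:zcleading} one has $2dz_c = \ee^{-1} + o(1)$; combining this with $G_{z_c}(s) = \ee/(2d) + o(2d)^{-1}$ from Lemma~\ref{lem:G1} and $\hat\Pi_{z_c} = -3\ee/(2d) + o(2d)^{-1}$ from Lemma~\ref{lem:Pi1}, two products of asymptotic factors yield
\[
    2dr_c = 1 - 2dz_c \hat\Pi_{z_c} - 2dz_c G_{z_c}(s) = 1 + \frac{3}{2d} - \frac{1}{2d} + o(2d)^{-1} = 1 + \frac{2}{2d} + o(2d)^{-1}.
\]

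Next, I would substitute this value into Theorem~\ref{thm:gstart}. Since $(2d)r_c^2 = (2dr_c)^2/(2d) = 1/(2d) + O(2d)^{-2}$, the terms $\tfrac18(2d)^2 r_c^4$ and $\tfrac{7/6}{(2d)^2}$ inside the bracket are each $O(2d)^{-2}$, and $g_\circ(z_c) = O(2d)^{-2}$ by \eqref{e:gcirc0}. Expanding $\ee^{2dr_c} = \ee\bigl(1 + 2/(2d) + O(2d)^{-2}\bigr)$ and multiplying out gives
\[
    g_c = \ee\left(1 + \frac{2}{2d}\right)\left(1 - \frac{1/2}{2d}\right) + o(2d)^{-1} = \ee\left(1 + \frac{3/2}{2d}\right) + o(2d)^{-1},
\]
which is \eqref{e:gct2}. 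For $z_c$, I would then combine \eqref{e:gct2} with Lemma~\ref{lem:Pi1} to obtain $g_c + \hat\Pi_{z_c} = \ee\bigl(1 - \tfrac{3/2}{2d}\bigr) + o(2d)^{-1}$, invert via $(1-x)^{-1} = 1 + x + O(x^2)$, and divide by $2d$ using \eqref{e:zc-bis} to reach \eqref{e:zct2}.

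There is no conceptual difficulty here; the work is purely bookkeeping of error orders. The point that most needs attention is that the $(2d)^{-1}$ correction to $g_c$ is produced by the combination of two separate contributions of the same size, namely the Taylor correction $2/(2d)$ coming from expanding $\ee^{2dr_c}$ and the leading $-\tfrac12(2d)r_c^2 = -\tfrac{1/2}{2d}$ coming from the bracket in Theorem~\ref{thm:gstart}; their sum $\tfrac{3/2}{2d}$ supplies the coefficient in \eqref{e:gct2}. All remaining contributions --- the quartic in $r_c$, the constant $-\tfrac{7}{6}$ term, the cross term in the product, and $g_\circ(z_c)$ --- must be checked to land at $O(2d)^{-2}$ so that they can be absorbed into the $o(2d)^{-1}$ error.
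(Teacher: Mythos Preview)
Your proposal is correct and follows exactly the same route as the paper's proof: compute $2dr_c$ to order $(2d)^{-1}$ via \eqref{e:rPiG} with Lemmas~\ref{lem:zcleading}, \ref{lem:G1}, \ref{lem:Pi1}, substitute into Theorem~\ref{thm:gstart} (together with \eqref{e:gcirc0}) to obtain \eqref{e:gct2}, and then feed \eqref{e:gct2} and Lemma~\ref{lem:Pi1} into \eqref{e:zc-bis} for \eqref{e:zct2}. Your explicit bookkeeping of the $2/(2d)$ and $-\tfrac12/(2d)$ contributions is more detailed than the paper's terse ``follows immediately after substitution,'' but the argument is identical.
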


\begin{proof}
We begin by noting that $g_\circ(z_c) =
O(2d)^{-1}$, by \eqref{e:gcirc0}.
Next, we combine the identity $2dr_c =
1 - 2d\z{3}\FPI{3} - 2d\z{3}G_{z_c}(s)$ of \eqref{e:rPiG} with
Lemmas~\ref{lem:zcleading} and \ref{lem:G1}--\ref{lem:Pi1}
to obtain
\begin{equation}
\label{e:1-M}
    2dr_c =
   1 + \frac{3}{2d}
   - \frac{1}{2d} + o(2d)^{-1}
   = 1 + \frac{2}{2d} + o(2d)^{-1}.
\end{equation}
Then \eqref{e:gct2} follows immediately after substitution of
\eqref{e:1-M} into the right-hand side of
the identity for $g_c$ in Theorem~\ref{thm:gstart}.
Finally, \eqref{e:zct2}
follows from substitution of \eqref{e:gct2} and the formula
for $\hat\Pi_{z_c}$ of Lemma~\ref{lem:Pi1} into \eqref{e:zc-bis}.
\end{proof}

\section{Third term}
\label{sec:term3}

We now complete the proof of Theorem~\ref{thm:1}.  To do this,
we first extend the estimates on $G_{z_c}(s)$ and
$\hat\Pi_{z_c}$ obtained
in Lemmas~\ref{lem:G1}--\ref{lem:Pi1}.
With these extensions, we then extend the estimate
on $g_c$ of Lemma~\ref{lem:g2},
and finally combine these results with \eqref{e:zc-bis} to extend the
estimate on $z_c$ and thereby complete the
proof of Theorem~\ref{thm:1}.
To begin, we insert the formulas of Lemma~\ref{lem:g2} into the formula
for $Q(s)$ of Lemma~\ref{lem:Q}, to obtain
\begin{equation}
\label{e:Q2}
    Q(s)
    =  2 z_cg_c^3  -  \frac{ \ee^2 }{(2d)^2}  +  o(2d)^{-2}
    =  \ee^2
    \left[
    \frac{2}{2d}  +  \frac{ 11 }{(2d)^2}
    \right]
    +  o(2d)^{-2}.
\end{equation}

The estimate we need for $G_{z_c}(s)$ was stated earlier as
Theorem~\ref{thm:G_2}, which for convenience we restate
as follows.

\begin{theorem}
\label{thm:G_2-bis}
For lattice trees or lattice animals,
and for a neighbour $s$ of the origin,
\begin{equation}
\label{e:G0x_2-bis}
    \GG{3}(s)
    =
    \ee
    \left[
    \frac{1}{2d} + \frac{\frac72}{(2d)^2}
    \right]
    + o(2d)^{-2}.
\end{equation}
\end{theorem}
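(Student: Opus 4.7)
The strategy is to sharpen the inclusion--exclusion of Lemma~\ref{lem:G1} by one order. Write $n = 2d$ for brevity, and decompose $G_{z_c}(s)$ according to whether the bond $\{0,s\}$ lies in the cluster: $G_{z_c}(s) = G^{\mathrm{b}}_{z_c}(s) + G^{\mathrm{nb}}_{z_c}(s)$. A cluster contributing to $G^{\mathrm{nb}}_{z_c}(s)$ must contain a self-avoiding walk from $0$ to $s$ of length at least $3$ (and by parity, odd), so we further split it into the parts where the shortest such walk has length exactly $3$ and where it has length at least $5$. The latter is bounded by $G^{(5)}_{z_c}(s) = S^{(5,1)}_{z_c} = O(n^{-5/2}) = o(n^{-2})$ by Lemma~\ref{lem:DmGn}, so only the bond piece and the length-$3$ piece matter at our accuracy.

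For the bond contribution: for lattice trees the bond $\{0,s\}$ is a bridge, and its removal from a tree containing it yields two vertex-disjoint subtrees rooted at $0$ and $s$, so inclusion--exclusion on vertex overlap gives $G^{\mathrm{b}}_{z_c}(s) = z_c(g_c^2 - Q(s))$ exactly. For lattice animals the same identity holds up to a cyclic correction counting animals in which $\{0,s\}$ lies in a cycle; such an animal contains $\{0,s\}$ together with a self-avoiding detour from $s$ to $0$ of length at least $3$, forming a cycle of length at least $4$. This correction is $O(n^{-3})$, either by a BK-type bound combined with Lemma~\ref{lem:DmGn}, or by a direct count of the $\sim 2(d-1)$ minimal $4$-cycles through $\{0,s\}$, each weighted by $z_c^4 g_c^4 = O(n^{-4})$. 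Substituting Lemma~\ref{lem:g2} and~\eqref{e:Q2} yields
\[
z_c\bigl(g_c^2 - Q(s)\bigr) = \ee\left[\frac{1}{n} + \frac{5/2}{n^2}\right] + o(n^{-2}).
\]

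For the length-$3$ contribution: a cluster contains a self-avoiding walk $\omega = (0,y_1,y_2,s)$ of length $3$ together with a rib at each of its four vertices, subject to the usual mutual-avoidance constraints and the absence of any shortcut bond (in particular $\{0,s\}$). The leading term is $N_3 \cdot z_c^3 g_c^4$, obtained by summing over $\omega$ with an independent rib factor $g_c^4$, where $N_3$ is the number of length-$3$ SAWs from $0$ to $s$. A direct enumeration (the $6d - 3$ length-$3$ walks with displacement $s$, minus the $4d - 1$ non-self-avoiding ones) gives $N_3 = 2d - 2$, so by Lemmas~\ref{lem:zcleading}--\ref{lem:g2}, $N_3\, z_c^3 g_c^4 = (n-2)\bigl(\ee/n^3 + O(n^{-4})\bigr) = \ee/n^2 + o(n^{-2})$. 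The avoidance corrections from rib--rib intersections are bounded by $O(n \cdot z_c^3 g_c^2 \cdot Q(\cdot)) = O(n^{-3})$ using $Q(\cdot) = O(n^{-1})$, and for animals the additional contributions from extra bonds in the backbone region creating cycles are likewise $O(n^{-3})$ by Lemma~\ref{lem:DmGn}.

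Combining the two pieces gives $G_{z_c}(s) = \ee[\frac{1}{n} + \frac{5/2}{n^2}] + \ee\,n^{-2} + o(n^{-2}) = \ee[\frac{1}{n} + \frac{7/2}{n^2}] + o(n^{-2})$, as required. The main obstacle is the careful bookkeeping of these avoidance corrections: for the bond contribution one must control animals in which $\{0,s\}$ lies in a cycle, and for the length-$3$ contribution one must control both rib--rib intersections and, for animals, extra cycle-forming bonds. In each case the extra rib intersection or extra bond forces an additional factor of at most $n^{-1}$ via $Q$ or via Lemma~\ref{lem:DmGn}, so all such corrections sit at $o(n^{-2})$, leaving only the two identified leading terms.
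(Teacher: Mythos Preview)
Your proof is correct and follows essentially the same route as the paper's. Both arguments reduce to paths of length $1$ or $3$ via the $S^{(5,1)}_{z_c}$ bound, identify the bond contribution as $z_c(g_c^2-Q(s))=\ee[n^{-1}+\tfrac{5}{2}n^{-2}]+o(n^{-2})$ (with an $O(n^{-3})$ cyclic correction for animals), and identify the length-$3$ contribution as $(2d-2)z_c^3 g_c^4=\ee\,n^{-2}+o(n^{-2})$; your decomposition by ``bond present/absent'' is equivalent to the paper's decomposition by shortest path length, and your explicit enumeration $N_3=(6d-3)-(4d-1)=2d-2$ simply spells out what the paper asserts. One cosmetic point: $G^{(5)}_{z_c}(s)\le S^{(5,1)}_{z_c}$ rather than equality.
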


\begin{proof}
It follows from Lemma~\ref{lem:DmGn}
that $G_{z_c}^{(5)}(s) \le O(2d)^{-5/2}$, so we need only consider
clusters in which a path of length $1$ or $3$ joins the points $0$
and $s$.

For the lower bound, we consider clusters that either contain
the bond $\{0,s\}$ with this bond not in a cycle,
or that do not contain $\{0,s\}$ but contain
one of the $2d-2$ paths of length $3$ from $0$ to $s$ with this path not
part of a cycle.
The first contribution is equal to
\begin{equation}
    z_c (g_c^2 -Q(s))
    =
    \ee \left[ \frac{1}{2d} + \frac{\frac52}{(2d)^2}
    \right] + o(2d)^{-2},
\end{equation}
by \eqref{e:Q2} and Lemma~\ref{lem:g2}.
With $s'$ a neighbour of the origin that is not equal to
$\pm s$, the second contribution is bounded below by
\begin{align}
    &(2d-2) z_c^3
    \sum_{R_0\ni0,R_1\ni s' \atop R_2\ni s+s',R_3\ni s}
    z_c^{\abs{R_0}+\abs{R_1}+\abs{R_2}+\abs{R_3}}
    \prod_{0\leq i<j\le 3}\ob{1+\U_{ij}}
    \nonumber \\ \nonumber
    &\quad
    \geq (2d-2) z_c^3
    \sum_{R_0\ni0,R_1\ni s' \atop R_2\ni s+s',R_3\ni s}
    z_c^{\abs{R_0}+\abs{R_1}+\abs{R_2}+\abs{R_3}}
    \ob{1+\sum_{0\leq i<j\leq3}\U_{ij}}\\
    &\quad
    =(2d-2) z_c^3\left( g_c^4 - 4g_c^2 Q(s)
    - 2g_c^2 Q(s+s') \right).
\end{align}
Now we apply Lemma~\ref{lem:zcleading}, and the fact that
$Q(x)=o(1)$ by Lemma~\ref{lem:Q},
to see that this last
expression is equal to
\begin{equation}
    (2d-2) z_c^3\left( g_c^4 - 4g_c^2 Q(s)
    - 2g_c^2 Q(s+s') \right)
    =
    \frac{\ee}{(2d)^2} +o(2d)^{-2}.
\end{equation}
Combining the above results gives the lower bound
\begin{equation}
    \GG{3}(s)
    \ge
    \ee
    \left[
    \frac{1}{2d} + \frac{\frac72}{(2d)^2}
    \right]
    + o(2d)^{-2}.
\end{equation}

For an upper bound, we again need only
consider the cases where there is a path of length
1 or 3 connecting $0$ and $s$.  Suppose first that there is a
path of length 1.  If the bond $\{0,s\}$ is not in a cycle, then the
above argument again gives a contribution
\begin{equation}
    z_c (g_c^2 -Q(s))
    =
    \ee \left[ \frac{1}{2d} + \frac{\frac52}{(2d)^2}
    \right] + o(2d)^{-2}.
\end{equation}
On the other hand, if $\{0,s\}$ is part of a cycle, then we need
only consider the case where this bond is part of
a cycle of length 4, because otherwise
there is a path from $0$ to $s$ of length at least $5$.
The contribution from animals containing $\{0,s\}$ within a cycle of length 4
is at most $(2d-2)z_c^4 g_c^4 = O(2d)^{-3}$, so this is an error
term.  Thus the upper bound for the case of direct connection agrees
with the lower bound.   In addition, the contribution
when there is a path of length 3 is at most
\begin{equation}
    (2d-2)z_c^3 g_c^4
    =
    \frac{\ee}{(2d)^2} + o(2d)^{-2},
\end{equation}
so here too the upper and lower  bounds match, and the proof is complete.
\end{proof}

Next, we present three lemmas which extract the terms
in $\hat\Pi_{z_c}^{(N)}$ up to $o(2d)^{-2}$,
for $N=0,1,2$.  The case $N=0$ occurs only for lattice animals,
and we begin with this case.

\begin{lemma}
\label{lem:Pi0}
For lattice animals,
\begin{align}
\label{e:Pia0-2}
    \hat\Pi^{(a,0)}_{z_c}
    &=  \frac{ \frac32 }{(2d)^2} + o (2d)^{-2},
    \\
\label{e:gcirc-1}
    g_\circ (z_c^{(a)})
    & =
    \frac{\frac 12}{(2d)^2} + o(2d)^{-2}.
\end{align}
\end{lemma}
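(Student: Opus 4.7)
The plan is to observe that the minimal cycle length of $\Zd$ through the origin is $4$ (no cycles of length $3$ or $5$ embed in $\Zd$) and to show that both $g_\circ(z_c^{(a)})$ and $\hat\Pi^{(a,0)}_{z_c}$ are dominated at order $(2d)^{-2}$ by animals built on a single $4$-cycle through the origin, with all other configurations absorbed into the $o((2d)^{-2})$ error.

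I would begin with $g_\circ(z_c^{(a)})$, splitting the sum according to the length $\ell\ge 4$ of the shortest cycle through $0$. The contribution from $\ell\ge 6$ is bounded by $S^{(6,1)}_{z_c}\le O((2d)^{-3})$ via Lemma~\ref{lem:DmGn}, hence is $o((2d)^{-2})$. For $\ell=4$, I enumerate the $2d(d-1)=\tfrac12(2d)^2-2d$ unoriented $4$-cycles through $0$, each of the form $0\to s\to s+s'\to s'\to 0$ with $s,s'$ unit vectors and $s'\ne\pm s$. For each such cycle I attach a rib (animal) rooted at each of its four vertices and perform inclusion--exclusion on the six pairs of ribs: the leading term is $z_c^4g_c^4$ per cycle, and each pair-intersection correction is bounded by $z_c^4 g_c^2\, Q(v_i-v_j)$ with $Q(v_i-v_j)=O((2d)^{-1})$ via an $S$-bound as in \eqref{e:Pia0}. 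Summed over the $\Theta((2d)^2)$ cycles the total correction is $O((2d)^{-3})$. Combining with $(z_cg_c)^4=(2d)^{-4}(1+o(1))$ from Lemma~\ref{lem:g2} yields
\[
    g_\circ(z_c^{(a)})=2d(d-1)\,z_c^4g_c^4(1+o(1))=\frac{1/2}{(2d)^2}+o((2d)^{-2}).
\]

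Next, for $\hat\Pi^{(a,0)}_{z_c}$, the definition \eqref{eq:Pi0} gives
\[
    \hat\Pi^{(a,0)}_{z_c}=\sum_{x\ne 0}\sum_{R\in\mathcal D_{0,x}}z_c^{|R|},
\]
so an animal whose only cycle through $0$ is a single $4$-cycle is counted exactly three times---once for each of the three other vertices of the cycle, all of which are doubly connected to $0$. Animals containing two or more $4$-cycles through $0$ have at least seven bonds and contribute only $O((2d)^{-4})$ after counting skeleton pairs. Double connections of combined length $\ge 6$ are bounded by $S^{(6,2)}_{z_c}=o((2d)^{-2})$, in parallel with the argument in \eqref{e:Pia0}. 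The same cycle count and rib inclusion--exclusion as above, multiplied by the factor $3$, then yield
\[
    \hat\Pi^{(a,0)}_{z_c}=3\cdot 2d(d-1)\,z_c^4g_c^4(1+o(1))=\frac{3/2}{(2d)^2}+o((2d)^{-2}).
\]

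The main obstacle is the clean execution of the rib inclusion--exclusion on the $4$-cycle: the four ribs must be mutually avoiding, so the effective replacement by $g_c^4$ incurs a correction at each of the six pair-overlaps (four at unit distance and two at distance $\sqrt 2$) that must be controlled uniformly by $Q$-type estimates from Lemma~\ref{lem:DmGn}. A secondary subtlety for $\hat\Pi^{(a,0)}$ is ensuring that animals with multiple $4$-cycles through the origin (which would multiply-count doubly-connected vertices) contribute only to the error, which follows from the seven-bond minimum noted above.
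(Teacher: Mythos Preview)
Your proposal is correct and follows essentially the same route as the paper's proof. Both arguments isolate the unit-square contribution (the $\tfrac12(2d)(2d-2)$ cycles through $0$), attach four ribs with leading weight $z_c^4g_c^4$, handle longer double connections via an $S^{(6,\cdot)}$ bound from Lemma~\ref{lem:DmGn}, and use inclusion--exclusion with the $Q$-quantities to match the lower bound; the factor $3$ for $\hat\Pi^{(a,0)}_{z_c}$ versus $g_\circ$ appears for the same reason (the three nonzero vertices of the square). The only cosmetic differences are that the paper treats $\hat\Pi^{(a,0)}_{z_c}$ first and deduces $g_\circ$ by dropping the factor $3$, and that the paper uses the cruder $Q(x)=o(1)$ from Lemma~\ref{lem:Q} in the lower bound rather than your sharper $O((2d)^{-1})$.
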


\begin{proof}
According to its definition in \eqref{eq:Pi0},
\begin{equation}
    \hat\Pi^{(a,0)}_{z_c}
    =
    \sum_{x \neq 0} \sum_{R \in {\cal D}_{0,x}} z_c^{|R|}.
\end{equation}
The main contribution to the right-hand side arises when
$R$ is a unit square containing $0$, with $x$ a nonzero
vertex on the square.
Therefore, since there are $\frac 12 (2d)(2d-2)$
such squares and $3$ nonzero
vertices in each one,
\begin{equation}
    \hat\Pi^{(a,0)}_{z_c}
    \le
    3\frac{1}{2} (2d)(2d-2) z_c^4 g_c^4 + S^{(6,2)}
    =
    \frac{ \frac32 }{(2d)^2} + o (2d)^{-2} ,
\end{equation}
where we used Lemmas~\ref{lem:zcleading} and \ref{lem:DmGn} in the
last equality.
For a lower bound, we count only the contributions with $0,x$
in a cycle of length $4$,
and use inclusion-exclusion for the branches emanating from the unit
square,
to obtain
\begin{equation}
    \hat\Pi^{(a,0)}_{z_c}
    \ge
    3\frac{1}{2} (2d)(2d-2) z_c^4
    \left[ g_c^4 -4 g_c^2 Q(s_1) - 2 g_c^2 Q(s+s')) \right]
    =
    \frac{ \frac32 }{(2d)^2} + o (2d)^{-2} ,
\end{equation}
where we have used Lemma~\ref{lem:zcleading} together with
the fact that $Q(x)=o(1)$ by Lemma~\ref{lem:Q}.
This proves \eqref{e:Pia0-2}.

A similar argument gives \eqref{e:gcirc-1}, with the factor $3$ missing
due to the fact that there is no sum over $x$ in $g_\circ$.
\end{proof}

\begin{lemma}
\label{lem:Pi1-2}
For lattice trees or lattice animals,
\begin{align}
\label{e:Pi1-2}
    \hat\Pi^{(1)}_{z_c}
    & =  \ee \left[
    \frac {3}{2d} + \frac{\frac{49}{2}}{(2d)^2}\right] + o(2d)^{-2}
    .
\end{align}
\end{lemma}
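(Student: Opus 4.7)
The plan is to refine the proof of Lemma~\ref{lem:Pi1} to work at precision $o(2d)^{-2}$, retaining its three-term decomposition
\begin{equation*}
    \hat\Pi^{(1)}_{z_c}
    = \Pi^{(1)}_{z_c}(0)
    + \sum_{s:\|s\|_1=1} \Pi^{(1)}_{z_c}(s)
    + \sum_{x:\|x\|_1 \ge 2} \Pi^{(1)}_{z_c}(x).
\end{equation*}
Recall from \eqref{Pi_uno} and \eqref{LA_Pi_uno} that a nonzero contribution to $\Pi^{(1)}_{z_c}(x)$ requires the backbone walk, together with the first and last ribs (which are forced to share a vertex by the factor $-\U_{0|\omega|}$), to form a theta-graph configuration between $0$ and $x$.

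For $\Pi^{(1)}_{z_c}(0)$ the backbones contributing up to order $(2d)^{-2}$ have length $2$ or $4$. The length-$2$ backbone $\omega=(0,s,0)$ is analyzed as in Lemma~\ref{lem:Pi1}, and one further round of inclusion-exclusion on rib avoidance gives
\begin{equation*}
    2d\, z_c^2\bigl(g_c^3 - 2 g_c Q(s)\bigr) + o(2d)^{-2};
\end{equation*}
substituting the refined $g_c$ and $z_c$ from Lemma~\ref{lem:g2} and $Q(s)$ from \eqref{e:Q2} produces an explicit expansion $\ee/(2d) + \ee\beta_0/(2d)^2 + o(2d)^{-2}$. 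The length-$4$ backbone $(0,s_1,v,s_2,0)$ requires excluding configurations where distinct rib slots coincide at one vertex, such as $v=0$ or $s_1=s_2$, since these force $1+\U_{ij}=0$ for an internal pair and annihilate the contribution; the surviving count is $2d(2d-2)$, giving a length-$4$ contribution $\sim 2d(2d-2)\,z_c^4 g_c^5 \sim \ee/(2d)^2$ at leading order, with rib-avoidance corrections of order $(2d)^{-3}$ that can be neglected.

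The corresponding analysis for $\sum_{s:\|s\|_1=1}\Pi^{(1)}_{z_c}(s)$ splits the length-$1$ piece according to whether the bond $\{0,s\}$ lies in the rib at $0$ or in the rib at $s$, yielding $4d\, z_c^2(g_c^3 - g_c Q(s)) + o(2d)^{-2}$, and adds length-$3$ backbone contributions $(0,s_1,v,s)$ in which the outer ribs $R_0$ and $R_3$ must share a vertex---this intersection is what forces an extra $Q$-type weight and pushes the length-$3$ contribution to order $(2d)^{-2}$. Finally, the last term $\sum_{\|x\|_1\ge 2}\Pi^{(1)}_{z_c}(x)$, previously discarded via $S^{(4,3)}_{z_c}=O(2d)^{-2}$, is now evaluated precisely by enumerating the finitely many theta-graph topologies in which the three bond-disjoint paths between $0$ and $x$ have total length exactly $4$, and computing each at leading order via $z_c^4 g_c^k$ times combinatorial factors of order $(2d)^2$; the total-length-$\ge 5$ remainder is bounded by $S^{(5,3)}_{z_c} = o(2d)^{-2}$ through Lemma~\ref{lem:DmGn}.

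The main obstacle is the systematic bookkeeping: the coefficient $\tfrac{49}{2}$ arises as the sum of pieces coming from the refined expansions of $g_c^3$ and $z_c^2$ inside $2dz_c^2g_c^3$ and $4dz_c^2g_c^3$, from the inclusion-exclusion corrections proportional to $Q(s)$, and from the length-$3$, length-$4$, and short theta-graph contributions enumerated above. Avoiding double counting is delicate, since the lace expansion allows multiple backbone choices for a given cluster; and for animals, configurations with the origin in a short cycle have already been captured by $\hat\Pi^{(a,0)}_{z_c}$ in Lemma~\ref{lem:Pi0} and must not be re-counted here. The minimum cycle length $4$, combined with Lemma~\ref{lem:Pi0} and the minimum-path-length estimates of Lemma~\ref{lem:DmGn}, ensures the required separation of the various strata at precision $o(2d)^{-2}$.
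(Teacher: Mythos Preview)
Your overall strategy matches the paper's, but there is a concrete error in your treatment of the length-$2$ backbone contribution to $\Pi^{(1)}_{z_c}(0)$. For $\omega=(0,s,0)$ the ribs $R_0,R_2$ both contain $0$, so $-\U_{02}=1$ automatically and the weight is exactly $(1+\U_{01})(1+\U_{12})=1+\U_{01}+\U_{12}+\U_{01}\U_{12}$. Hence
\[
\Pi^{(1,2)}_{z_c}(0)
= 2d\,z_c^2\Bigl[g_c^3 - 2g_c Q(s) + \sum_{R_0,R_1,R_2} z_c^{|R_0|+|R_1|+|R_2|}\,\U_{01}\U_{12}\Bigr],
\]
and the last sum is the quantity $Q^*(s)$ of Lemma~\ref{lem:QPi1}, equal to $\ee^3/(2d)+o(2d)^{-1}$. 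Thus $2d\,z_c^2 Q^*(s)=\ee/(2d)^2+o(2d)^{-2}$, which is \emph{not} an error term; your claimed formula $2d\,z_c^2(g_c^3-2g_cQ(s))+o(2d)^{-2}$ drops this contribution and would give the wrong coefficient. The paper isolates precisely this term and invokes Lemma~\ref{lem:QPi1} to evaluate it.

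A second, related gap: for the length-$1$ backbone of $\Pi^{(1)}_{z_c}(s)$ the exact contribution is $z_cQ(s)$, not $2z_c^2(g_c^3-g_cQ(s))$. The latter is the \emph{lower bound} from Lemma~\ref{lem:Pi1}, counting only configurations where the bond $\{0,s\}$ lies in one of the ribs; it misses the case where $R_0$ and $R_1$ intersect via a longer path. Expanding both expressions with \eqref{e:Q2} and Lemma~\ref{lem:g2} shows they differ by $3\ee/(2d)^3$ per neighbour $s$, hence by $3\ee/(2d)^{2}$ after summing---again not negligible at the target precision. The paper instead writes $\Pi^{(1)}_{z_c}(s)=z_cQ(s)+(2d-2)z_c^3g_c^2Q(s)+O(2d)^{-3}$ directly.

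Finally, your concern about double counting between $\hat\Pi^{(a,0)}_{z_c}$ and $\hat\Pi^{(a,1)}_{z_c}$ is misplaced: these are separately \emph{defined} terms in the lace expansion (see \eqref{eq:Pi0} and \eqref{LA_Pi_uno}), not overlapping contributions to a common quantity, so no subtraction between them is required.
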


\begin{proof}
We give the proof only for the case of
lattice trees.  With minor changes, the arguments
presented here also lead to a proof for lattice animals.

By definition,
\begin{equation}
    \hat\Pi^{(1)}_{z_c} = \sum_{x \in \Zd} \Pi^{(1)}_{z_c}(x).
\end{equation}
Contributions from $x \neq 0, s, s+s'$, where $s,s'$ are orthogonal
neighbours of the origin, are bounded above by $S^{(6,3)}=O(2d)^{-3}$
and need not be considered further.  By symmetry, we therefore have
\begin{equation}
    \hat\Pi^{(1)}_{z_c} =  \Pi^{(1)}_{z_c}(0) + 2d \Pi^{(1)}_{z_c}(s)
    + \frac{2d(2d-2)}{2}
      \Pi^{(1)}_{z_c}(s+s') + O(2d)^{-3}.
\end{equation}

Consider $\Pi^{(1)}_{z_c}(s+s')$.  The shortest backbones have
length $2$ and there are $2$ of these.  The shortest allowed
rib intersections complete the unit square and there are 3 of
these corresponding to the 3 possible nonzero intersection
points for the ribs at 0 and $s+s'$.  Thus we obtain
\begin{equation}
    \Pi^{(1)}_{z_c}(s+s')
    \le 3 \cdot 2 z_c^4 g_c^5 + S^{(6,3)}
    +  O(2d)^{-3}
    \le \frac{6\ee}{(2d)^4} + O(2d)^{-3}.
\end{equation}
Arguments like those we have been using previously
can be used to verify that the first term on the right-hand
side is also the leading behaviour of a lower bound, and hence
\begin{equation}
    \Pi^{(1)}_{z_c}(s+s')
    = \frac{6\ee}{(2d)^4} + o(2d)^{-2}.
\end{equation}
This shows that
\begin{equation}
    \hat\Pi^{(1)}_{z_c} =  \Pi^{(1)}_{z_c}(0) + 2d \Pi^{(1)}_{z_c}(s)
    + \frac{3\ee}{(2d)^2} + o(2d)^{-2}.
\end{equation}

Consider $\Pi^{(1)}_{z_c}(s)$.
We need only consider the contributions
due to rib intersections which together with the backbone form
a double bond or
a unit square, because the remaining contributions are
bounded by $S^{(6,3)}=O(2d)^{-3}$.
These backbones have length  $1$ or $3$, respectively.
Thus we obtain
(the first term is due to the
length-1 backbone
and the second to the length-3 backbone)
\begin{align}
    \Pi^{(1)}_{z_c}(s)
    & \le
    z_c Q(s) +
     (2d-2) z_c^3 g_c^2 Q(s) + O(2d)^{-3}
    \nonumber \\
    & = \ee
    \left[ \frac{2}{(2d)^2} + \frac{16}{(2d)^3} \right] + o(2d)^{-2},
\end{align}
by Lemma~\ref{lem:zcleading} and \eqref{e:Q2}.
It is routine to prove a matching lower bound, yielding
\begin{align}
    2d \Pi^{(1)}_{z_c}(s)
    & = \ee \left[ \frac{2}{2d} + \frac{16}{(2d)^2} \right] + o(2d)^{-2},
\end{align}
and hence
\begin{equation}
    \hat\Pi^{(1)}_{z_c} =  \Pi^{(1)}_{z_c}(0) +
    \ee \left[ \frac{2}{2d} + \frac{19}{(2d)^2} \right] + o(2d)^{-2}.
\end{equation}

Finally, we consider the contributions to $\Pi^{(1)}_{z_c}(0)$ due
to backbones of length $2$ and $4$, which we denote as
$\Pi^{(1,2)}_{z_c}(0)$ and $\Pi^{(1,4)}_{z_c}(0)$ respectively.
First,
\begin{align}
    \Pi^{(1,4)}_{z_c}(0)
    & \le
    2d(2d-2) z_c^4 g_c^5 + S^{(6,1)}
    =  \frac{\ee}{(2d)^2}
    +O(2d)^{-3},
\end{align}
and a routine matching lower bound gives
\begin{align}
    \Pi^{(1,4)}_{z_c}(0)
    &
    =  \frac{\ee}{(2d)^2}
    +O(2d)^{-3}.
\end{align}
Next,
\begin{align}
\label{eq:Pi1_2_A}
    \Pi^{(1,2)}_{z_c}(0)
    &= 2d \z{3} ^2\sum_{\substack{R_0\ni0,R_1\ni s \\ R_2\ni0}}
    \z{3}^{\abs{R_0}+\abs{R_1}+\abs{R_2}}
    \ob{1+\U_{01}+\U_{12}+\U_{01}\U_{12}}
    \nonumber
    \\
    &=
    2d \z{3}^2\Big[ g_c^3 - 2 g_c Q(s)
    + \sum_{\substack{R_0\ni0,R_1\ni s \\ R_2\ni0}}
    \z{3}^{\abs{R_0}+\abs{R_1}+\abs{R_2}} \U_{01}\U_{12} \Big]
    \nonumber
    \\
    & =
    \ee \left[ \frac{1}{2d} + \frac{\frac72}{(2d)^2} \right]
    + o(2d)^{-2}
    +
     2d \z{3}^2
     \left[\frac{\ee^3}{2d} +  o(2d)^{-1}\right]
     ,
\end{align}
where we used Lemma~\ref{lem:g2} and
Lemmas~\ref{lem:Q}--\ref{lem:QPi1} in the last equality.
With Lemma~\ref{lem:zcleading}, this gives
\ecus{Pi1_2}{
        \Pi^{(1,2)}_{z_c}(0)
        & = \frac{\ee}{2d} + \frac{\frac92\ee}{(2d)^2}
        + o(2d)^{-2}.
}
Thus we obtain
\begin{align}
    \Pi^{(1)}_{z_c}(0)
    &
    = \ee \left[ \frac{1}{2d} + \frac{\frac{11}{2}}{(2d)^2} \right]
    +o(2d)^{-2}.
\end{align}

Altogether, we have
\begin{equation}
    \hat\Pi^{(1)}_{z_c} =
    \ee \left[ \frac{3}{2d}
    + \frac{\frac{49}{2}}{(2d)^2} \right] + o(2d)^{-2},
\end{equation}
which proves \eqref{e:Pi1-2}.
\end{proof}

\begin{lemma}
\label{lem:Pi2}
For lattice trees or lattice animals,
\begin{align}
\label{e:Pi2-2}
    \hat\Pi^{(2)}_{z_c}
    & =  \frac{11\ee}{(2d)^2}  + o(2d)^{-2}.
\end{align}
\end{lemma}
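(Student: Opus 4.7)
Since \eqref{e:HSPibds} and \eqref{e:HSPibda} give $\hat\Pi^{(2)}_{z_c} = O(2d)^{-2}$, it suffices to extract the leading coefficient. From \eqref{eq:Pi2} (respectively \eqref{eq:Pia2}), $\hat\Pi^{(2)}_{z_c}$ is a sum over backbones $\omega$ (resp.\ $B$) of length at least $2$, over ribs $R_0, \ldots, R_{|\omega|}$, and over $2$-laces $L \in \mathcal{L}^{(2)}[0, |\omega|]$. The $2$-laces fall into two families: laces $\{0j, jn\}$ with $0 < j < n$, in which both edges meet at $j$, and laces $\{0j, in\}$ with $0 < i < j < n$, in which the two edges interleave. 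In either case the factor $\prod_{ab \in L} \U_{ab}$ forces two rib–rib intersections, so every nonzero contribution contains three bond-disjoint paths meeting at the lace indices.

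The plan is to follow the strategy of Lemma~\ref{lem:Pi1-2}. First I would write $\hat\Pi^{(2)}_{z_c} = \sum_{x\in\Zd} \Pi^{(2)}_{z_c}(x)$, split by symmetry class of $x$, and apply the diagrammatic bound $\hat\Pi^{(2)}_{z_c} \leq S^{(0,3)}_{z_c}$ sharpened to $S^{(m,3)}_{z_c}$ whenever the three paths must contain at least $m$ total steps; by Lemma~\ref{lem:DmGn} this reduces the problem to a finite list of $x$ classes, namely $x = 0$, $x = s$, $x = 2s$, and $x = s + s'$, with $s, s'$ orthogonal neighbors of the origin. Next, for each such $x$ and each lace family, I would enumerate the minimal backbones producing nonzero contributions: the dominant ones are $|\omega| = 2$ with $L = \{01, 12\}$ (the middle rib serving as the common intersection vertex) and $|\omega| = 3$ with $L = \{02, 13\}$ (two disjoint rib intersections). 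For each minimal configuration I would expand the compatible-pair factor $\prod_{i'j' \in \mathcal{C}(L)}(1 + \U_{i'j'})$ by inclusion-exclusion to the required precision, and evaluate the resulting rib sums in terms of $g_c$, $r_c$, $G_{z_c}(s)$, and $Q(x)$, using Lemmas~\ref{lem:zcleading}, \ref{lem:g2}, \ref{lem:Q}, and \ref{lem:QPi1} together with Theorem~\ref{thm:G_2-bis}. All remainders are controlled by the $S^{(m,n)}$ bounds of Lemma~\ref{lem:DmGn}. Summing the contributions should yield $11\ee/(2d)^2$.

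The hardest step will be the combinatorial bookkeeping for the two-lace structures, in particular the treatment of the compatible-pair factors: unlike in the one-lace analysis for $\hat\Pi^{(1)}_{z_c}$, the factors $(1+\U_{i'j'})$ here do not decouple cleanly from the lace factors $\U_{0j}\U_{jn}$ (or $\U_{0j}\U_{in}$), and careful inclusion-exclusion is required to separate the leading geometry, in which the ribs are non-overlapping planted clusters, from the subleading corrections involving the bubble $Q$. A secondary subtlety is that, for lattice animals, the ribs are doubly connected clusters rather than trees; however, any genuine cycle inside a rib costs an extra factor of order $z_c^3 g_c = O(2d)^{-3}$, so such cycle-containing configurations only contribute to the $o(2d)^{-2}$ remainder, which is why both models yield the same constant $11\ee$ at this order.
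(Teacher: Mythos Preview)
Your plan has two concrete gaps.

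First, the diagrammatic bound you invoke, $\hat\Pi^{(2)}_{z_c} \le S^{(0,3)}_{z_c}$ sharpened to $S^{(m,3)}_{z_c}$, is the \emph{triangle} bound appropriate for $\hat\Pi^{(1)}$, not for $\hat\Pi^{(2)}$. A nonzero contribution to $\Pi^{(2)}$ requires \emph{two} rib intersections dictated by a $2$-lace, and the corresponding diagram is a pair of adjacent squares (seven two-point function lines in all), not a single triangle. The correct upper bound is of the form $2S^{(i,4)}_{z_c}S^{(j,3)}_{z_c}$, improved to $2\sum_{i+j=k}S^{(i,4)}_{z_c}S^{(j,3)}_{z_c}=O(2d)^{-k/2}$ when the seven lines must carry at least $k$ steps. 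Without this product structure you cannot justify discarding long backbones or long rib-intersection paths at the precision you need.

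Second, and more seriously, your enumeration of dominant backbones is incomplete. You name only $|\omega|=2$ with $L=\{01,12\}$ and $|\omega|=3$ with $L=\{02,13\}$. In fact the paper's proof organises the computation by backbone length and finds
\[
\hat\Pi^{(2,2)}_{z_c}=\tfrac{5\ee}{(2d)^2},\qquad
\hat\Pi^{(2,3)}_{z_c}=\tfrac{5\ee}{(2d)^2},\qquad
\hat\Pi^{(2,4)}_{z_c}=\tfrac{\ee}{(2d)^2},
\]
each up to $o(2d)^{-2}$. For $|\omega|=3$ the non-interleaving laces $\{01,13\}$ and $\{02,23\}$ together contribute $4\ee/(2d)^2$ (via walks $\omega=(0,x,y,x)$), while your single lace $\{02,13\}$ contributes only $\ee/(2d)^2$. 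You also omit $|\omega|=4$ entirely: the lace $L=\{02,24\}$ with $\omega=(0,s,0,s',0)$ gives another $\ee/(2d)^2$. Missing these would leave you with at most $6\ee/(2d)^2$ rather than $11\ee/(2d)^2$. The paper's decomposition by $|\omega|$ (rather than by $x$) makes this bookkeeping cleaner, because the combinatorics of which laces and walk shapes survive at leading order is governed primarily by backbone length.
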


\begin{proof}
We defer the proof to Lemma~\ref{lem:Pi2-bis}.
\end{proof}

For convenience, we now restate Theorem~\ref{thm:Pi-2terms},
supplemented with an asymptotic formula for $g_\circ(z_c^{(a)})$.
Note that the factor $\ee$ is not present for $g_\circ(z_c^{(a)})$.
It is in Theorem~\ref{thm:Pi-2terms-bis}
that we first see a difference between lattice trees
and lattice animals.

\begin{theorem}
\label{thm:Pi-2terms-bis}
For lattice trees or lattice animals,
\begin{align}
\label{e:Pia-2terms-bis}
    \hat\Pi_{z_c}
    &=
    \ee\left[
    -\frac{3}{2d} - \frac{ \frac{27}2 - \1_{\rm a}\frac32 \ee^{-1}}{(2d)^2}
    \right]
    +  o(2d)^{-2},
    \\
\label{e:gcirc}
    g_\circ (z_c^{(a)})
    & =
    \frac{\1_{\rm a} \frac 12}{(2d)^2} + o(2d)^{-2}.
\end{align}
\end{theorem}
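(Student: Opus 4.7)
The plan is to simply assemble the pieces already prepared in Lemmas~\ref{lem:Pi0}, \ref{lem:Pi1-2} and \ref{lem:Pi2}, together with the high-$N$ tail bounds \eqref{e:HSPibds}--\eqref{e:HSPibda} from \cite{Hara08}, and to read off $g_\circ(z_c^{(a)})$ directly from Lemma~\ref{lem:Pi0}.

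\textbf{Truncating the lace expansion.} For lattice trees, the lace expansion expresses $\hat\Pi_{z_c}$ as the alternating sum $\sum_{N\ge 1}(-1)^N \hat\Pi_{z_c}^{(t,N)}$, and \eqref{e:HSPibds} gives $\hat\Pi_{z_c}^{(t,N)} \le c^N d^{-N}$. Hence the contribution from $N\ge 3$ is $O(2d)^{-3}=o(2d)^{-2}$, so only $N=1,2$ survive at the order we need. For lattice animals the expansion starts at $N=0$, and \eqref{e:HSPibda} gives $\hat\Pi_{z_c}^{(a,N)} \le c^N d^{-(N\vee 1)}$, so again only $N=0,1,2$ survive.

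\textbf{Substituting the explicit values.} Using the indicator $\1_{\rm a}$ to unify the two cases, we combine Lemmas~\ref{lem:Pi0}, \ref{lem:Pi1-2} and \ref{lem:Pi2} via
\ecug{
\hat\Pi_{z_c}
= \1_{\rm a}\hat\Pi_{z_c}^{(a,0)} - \hat\Pi_{z_c}^{(1)} + \hat\Pi_{z_c}^{(2)} + o(2d)^{-2}.
}
Plugging in $\hat\Pi_{z_c}^{(a,0)} = \frac{3/2}{(2d)^2} + o(2d)^{-2}$, $\hat\Pi_{z_c}^{(1)} = \ee\bigl[\frac{3}{2d}+\frac{49/2}{(2d)^2}\bigr]+o(2d)^{-2}$ and $\hat\Pi_{z_c}^{(2)} = \frac{11\ee}{(2d)^2}+o(2d)^{-2}$, the $(2d)^{-1}$ coefficient is $-3\ee$ and the $(2d)^{-2}$ coefficient is
\ecug{
\1_{\rm a}\tfrac{3}{2} - \tfrac{49}{2}\ee + 11\ee
= -\tfrac{27}{2}\ee + \1_{\rm a}\tfrac{3}{2}
= \ee\bigl(-\tfrac{27}{2} + \1_{\rm a}\tfrac{3}{2}\ee^{-1}\bigr),
}
which gives exactly \eqref{e:Pia-2terms-bis}.

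\textbf{The quantity $g_\circ$.} For lattice trees, $g_\circ(z_c^{(t)})=0$ by definition (trees have no cycles), which matches $\1_{\rm a}=0$ in \eqref{e:gcirc}. For lattice animals, the formula $g_\circ(z_c^{(a)}) = \frac{1/2}{(2d)^2} + o(2d)^{-2}$ is exactly \eqref{e:gcirc-1} of Lemma~\ref{lem:Pi0}. Hence \eqref{e:gcirc} holds, completing the proof.

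\textbf{Main obstacle.} There is no real obstacle at this stage: the theorem is an accounting step, and all the hard work is concentrated in the proofs of Lemmas~\ref{lem:Pi0}, \ref{lem:Pi1-2} and \ref{lem:Pi2} (particularly the bookkeeping in Lemma~\ref{lem:Pi1-2}, where one must track contributions from backbones of several different lengths and resolve rib intersections via inclusion--exclusion driven by the estimates of Lemma~\ref{lem:Q}, \ref{lem:QPi1} and Lemma~\ref{lem:DmGn}). The only thing to be careful about here is the sign pattern of the alternating lace expansion and the appearance of $\1_{\rm a}$, which enters only through the $N=0$ animal term and through $g_\circ$.
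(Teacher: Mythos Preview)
Your proof is correct and follows exactly the same approach as the paper's own proof, which also simply cites Lemmas~\ref{lem:Pi0}--\ref{lem:Pi2} together with the tail bounds \eqref{e:HSPibds}--\eqref{e:HSPibda}. You have merely written out the arithmetic explicitly, which the paper leaves to the reader.
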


\begin{proof}
This follows immediately from
Lemmas~\ref{lem:Pi0}--\ref{lem:Pi2}, together with the bounds
on $\hat\Pi_{z_c}^{(N)}$ for
$N>2$ given by \eqref{e:HSPibds} and \eqref{e:HSPibda}.
\end{proof}

The next theorem restates Theorem~\ref{thm:1}, and completes its proof
(apart from the technical lemmas of Section~\ref{sec:cie}).

\begin{theorem}
\label{thm:g}
For lattice trees or lattice animals,
\begin{align}
\label{e:ga-bis}
    g_c  &= \ee
    \left[ 1 +\frac{\frac{3}{2}}{2d}
    +\frac{ \frac{263}{24} - \1_{\rm a} \ee^{-1} }{(2d)^2}\right]
    +o(2d)^{-2},
    \\
\label{e:zca3-bis}
    z_c   &=\ee^{-1}\left[ \frac{1}{2d}+\frac{\frac{3}{2}}{(2d)^2}
    +\frac{\frac{115}{24} -
    \1_{\rm a} \frac1{2}\ee^{-1} }{(2d)^3} \right] + o(2d)^{-3}
    .
\end{align}
\end{theorem}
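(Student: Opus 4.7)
The proof will be essentially an exercise in substitution and Taylor expansion, since the three main technical inputs---Theorem~\ref{thm:G_2-bis}, Theorem~\ref{thm:Pi-2terms-bis}, and the expansion for $g_c$ in terms of $r_c$ in Theorem~\ref{thm:gstart}---are already in hand. The plan is to chain them together in the order indicated by Figure~\ref{flowchart}: first express $2dr_c$ to order $(2d)^{-2}$ via the identity \eqref{e:rPiG}, then plug into Theorem~\ref{thm:gstart} (remembering to add the $g_\circ$ correction), then plug the result into \eqref{e:zc-bis}.

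For the first step, I would use \eqref{e:rPiG}, namely $2dr_c = 1 - 2dz_c \hat\Pi_{z_c} - 2dz_c G_{z_c}(s)$. Since $\hat\Pi_{z_c}$ and $G_{z_c}(s)$ are both $O((2d)^{-1})$, one only needs the previously-established expansion $2dz_c = \ee^{-1}[1 + \frac{3/2}{2d}] + o((2d)^{-1})$ from Lemma~\ref{lem:g2} in order to obtain the two products to order $(2d)^{-2}$. A direct multiplication using Theorems~\ref{thm:G_2-bis} and \ref{thm:Pi-2terms-bis} gives
\begin{equation*}
    2dr_c = 1 + \frac{2}{2d} + \frac{13 - \1_{\rm a}\frac{3}{2}\ee^{-1}}{(2d)^2} + o((2d)^{-2}).
\end{equation*}

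For the second step, I feed this into Theorem~\ref{thm:gstart}. Expand $\ee^{2dr_c} = \ee \cdot \exp(\frac{2}{2d} + \frac{13 - \1_{\rm a}(3/2)\ee^{-1}}{(2d)^2} + \cdots)$ to second order, and separately compute the bracket $[1 - \frac{1}{2}(2d)r_c^2 + \frac{1}{8}(2d)^2 r_c^4 - \frac{7/6}{(2d)^2}]$ using $(2d)r_c^2 = \frac{(2dr_c)^2}{2d}$, where only $(2dr_c)^2$ to order $(2d)^{-1}$ is needed. Multiply these two expansions. The resulting coefficient of $(2d)^{-2}$ in $g_c/\ee$, before adding $g_\circ$, turns out to equal $263/24 - \1_{\rm a}\frac{3}{2}\ee^{-1}$. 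Adding the correction $g_\circ(z_c^{(a)}) = \frac{\1_{\rm a}/2}{(2d)^2} + o((2d)^{-2})$ from \eqref{e:gcirc} converts the coefficient $-\1_{\rm a}\frac{3}{2}\ee^{-1}$ into $-\1_{\rm a}\ee^{-1}$, yielding \eqref{e:ga-bis}.

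For the third step, I would compute $g_c + \hat\Pi_{z_c}$ by adding the two expansions, obtaining
\begin{equation*}
    g_c + \hat\Pi_{z_c} = \ee\left[1 - \frac{3/2}{2d} + \frac{-61/24 + \1_{\rm a}\frac{1}{2}\ee^{-1}}{(2d)^2}\right] + o((2d)^{-2}),
\end{equation*}
and then invert using $(1+u)^{-1} = 1 - u + u^2 + \cdots$. The $u^2$ term promotes $9/4$ to the $(2d)^{-2}$ coefficient, giving $61/24 + 9/4 = 115/24$ after sign flip. Dividing by $2d$ via \eqref{e:zc-bis} then yields \eqref{e:zca3-bis}.

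There is no real obstacle; every ingredient is in place and the argument is purely arithmetic. The one point that requires care is bookkeeping the two independent contributions carrying the factor $\1_{\rm a}\ee^{-1}$---one coming from $g_\circ$ and the other from the cycle-of-length-$4$ term inside $\hat\Pi^{(a,0)}$ via Lemma~\ref{lem:Pi0}---and tracking how they combine when inverting $(g_c+\hat\Pi_{z_c})$. Once the cancellations $-\frac{3}{2}\ee^{-1} + \frac{1}{2}\ee^{-1} = -\ee^{-1}$ (for $g_c$) and $-\ee^{-1} + \frac{3}{2}\ee^{-1} = \frac{1}{2}\ee^{-1}$ (for $z_c$, after the reciprocal flip) are carried out correctly, the two formulas in Theorem~\ref{thm:g} follow immediately.
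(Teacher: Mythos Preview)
Your proposal is correct and follows exactly the same route as the paper: compute $2dr_c$ to order $(2d)^{-2}$ from \eqref{e:rPiG} using Lemma~\ref{lem:g2} and Theorems~\ref{thm:G_2-bis}, \ref{thm:Pi-2terms-bis}, substitute into Theorem~\ref{thm:gstart} together with \eqref{e:gcirc} to obtain \eqref{e:ga-bis}, and then insert \eqref{e:ga-bis} and \eqref{e:Pia-2terms-bis} into \eqref{e:zc-bis} to obtain \eqref{e:zca3-bis}. Your intermediate expression for $2dr_c$ coincides with the paper's \eqref{e:A3M1}, and the remaining arithmetic you outline (including the $\1_{\rm a}\ee^{-1}$ bookkeeping) is carried out correctly.
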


\begin{proof}
By \eqref{e:gstart} and \eqref{e:gcirc},
\begin{align}
\label{e:gend}
    g_c & =  \ee^{2dr_c}
    \left[
    1 - \frac 12 (2d)r_c^2 + \frac{1}{8}(2d)^2 r_c^4  - \frac{\frac 76}{(2d)^2}  \right]
    + \frac{\1_{\rm a} \frac 12}{(2d)^2}
    +o(2d)^{-2}  .
\end{align}
The identity \eqref{e:rPiG}, together with the results for
$z_c$, $\FPI{3}$ and $G_{z_c}(s)$ in
Lemma~\ref{lem:g2} and
Theorems~\ref{thm:G_2-bis} and \ref{thm:Pi-2terms-bis}, imply that
\begin{equation}
\label{e:A3M1}
    2dr_c
    =
    1- 2d\z{3}\FPI{3} -2dz_c G_{z_c}(s)
    =
    1 +\frac2{2d} + \frac{ 13 - \1_{\rm a} \frac{3}{2}\ee^{-1} }{(2d)^2}
    + o(2d)^{-2}.
\end{equation}
Substitution of \eqref{e:A3M1} into \eqref{e:gend} gives
\eqref{e:ga-bis}.
Finally, \eqref{e:zca3-bis}
follows immediately by substituting \eqref{e:Pia-2terms-bis}
and \eqref{e:ga-bis} into \eqref{e:zc-bis}.
\end{proof}

\section{Cluster intersection estimates}
\label{sec:cie}

The analysis in Sections~\ref{sec:onept}, \ref{sec:term1}, \ref{sec:term2},
and \ref{sec:term3}
relies on the estimates in this section, which in turn rely on
Lemma~\ref{lem:DmGn}.
Section~\ref{sec:Zestimates} provides the estimates needed for
the proof of
Theorem~\ref{thm:gstart}, and assumes only the starting bounds
\eqref{e:step0}.
Section~\ref{sec:Qestimates}
provides estimates needed in Sections~\ref{sec:term2}--\ref{sec:term3},
and relies on knowledge of the leading behaviour $g_c \sim \ee$ and
$z_c \sim (2d\ee)^{-1}$.

\subsection{Estimates for one-point function}
\label{sec:Zestimates}

Throughout this section, we assume only the starting bounds \eqref{e:step0}
and do not make use of higher order asymptotics.
In particular, we will make use of \eqref{e:rcbd}, which states that
$2dr_c = 1+o(1)$.
We prove two lemmas which provide
estimates needed in the proof
of Theorem~\ref{thm:gstart}.
The first gives estimates for $Z^{(i)}$ ($i=1,2,3$) defined
in \eqref{e:Zdef}--\eqref{e:Zstar2}, as well as
for $Z',Z''$ defined by
\begin{align}
\label{e:Zprime}
    Z^{'}(z)
    &=
    \sum_{s_1,s_2,s_3,s_4 \in \setE}
    \sum_{S_1\ni s_1}\sum_{S_2 \ni s_2}\sum_{S_3\ni s_3}\sum_{S_4\ni s_4}
    \z{4}^{\abs{S_1}+ \abs{S_2} +\abs{S_3} + \abs{S_4}} (-\V_{12}\V_{13}\V_{14}),
    \\
\label{e:Zprime2}
    Z^{''}(z)
    &=
    \sum_{s_1,s_2,s_3,s_4 \in \setE}
    \sum_{S_1\ni s_1}\sum_{S_2 \ni s_2}\sum_{S_3\ni s_3}\sum_{S_4\ni s_4}
    \z{4}^{\abs{S_1}+ \abs{S_2} +\abs{S_3} + \abs{S_4}} (-\V_{12}\V_{13}\V_{24})
    .
\end{align}
We use the subscript $c$ to denote quantities evaluated at $z_c$.

\begin{lemma}
\label{lem:ZZZ}
For lattice trees or lattice animals,
\begin{align}
\label{e:Zbd}
    Z^{(1)}_c
    &=
    2dr_c^2 + \frac{3}{(2d)^2} + o(2d)^{-2}
    ,
    \\
\label{e:Zstarbd}
    Z^{(2)}_c
    &=
    \frac{ 1 }{(2d)^2}
    + o(2d)^{-2}
    ,
    \\
\label{e:Zstar2bd}
    Z^{(3)}_c
    & =
    \frac{ 1}{(2d)^2}
    + o(2d)^{-2}
    ,
    \\
\label{e:Zprimes}
    Z^{'}_c
    & =
    Z^{''}_c
    = O(2d)^{-3}.
\end{align}
\end{lemma}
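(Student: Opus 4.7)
My plan is to prove each of the five estimates by partitioning the sum according to the coincidence pattern among the neighbour variables $s_1,\ldots,s_m\in\setE$: I extract the leading contribution from those configurations in which the coincidences force the $\V$-factors to be nonzero automatically, and I bound the remaining genuine intersection contributions using Lemma~\ref{lem:DmGn}, together where needed with the BK inequality of \cite[Lemma~2.1]{HS90b}. The starting bound \eqref{e:step0} and the consequence $2dr_c=1+o(1)$ from \eqref{e:rcbd} then yield the claimed asymptotics by elementary algebra.

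For $Z^{(1)}_c$ I would split according to whether $s_1=s_2$ or $s_1\neq s_2$. If $s_1=s_2=s$, both $S_1$ and $S_2$ contain $s$, so $-\V_{12}=1$ automatically and the contribution is $\sum_{s\in\setE}r_c^2=2dr_c^2$. If $s_1\neq s_2$, then $S_1\cap S_2$ must contain some $v\neq 0$, and an $\ell^1$-distance analysis shows that the minimum of $|S_1|+|S_2|$ equals $4$, achieved only when $s_1\perp s_2$ and $v\in\{s_1,s_2,s_1+s_2\}$. These three choices of $v$ correspond to three distinct minimal configurations: for $v=s_1+s_2$, both $S_1$ and $S_2$ are length-$2$ paths that together form a unit square; for $v=s_1$, $S_1$ is the single edge $\{0,s_1\}$ while $S_2$ is the length-$3$ path $0\to s_2\to s_1+s_2\to s_1$; and $v=s_2$ is symmetric. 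Using $z_cg_c=(2d)^{-1}(1+o(1))$ from \eqref{e:step0} together with the rib factor $g_c^4$ at the four non-origin vertices, each minimal case contributes $(2d)(2d-2)z_c^4 g_c^4(1+o(1))=(2d)^{-2}(1+o(1))$, and the three sum to $3/(2d)^2$. All remaining configurations involve at least five edges or have $s_1=-s_2$, and a combination of Lemma~\ref{lem:DmGn} with an inclusion-exclusion step (removing double counting among the minimal patterns) shows that they contribute only $o((2d)^{-2})$.

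For $Z^{(2)}_c$ and $Z^{(3)}_c$ the leading contribution comes from $s_1=s_2=s_3$, in which case all three pairwise $\V$-factors equal $-1$ automatically, so the products $\V_{12}\V_{13}$ and $-\V_{12}\V_{13}\V_{23}$ both equal $1$. This yields $\sum_{s\in\setE}r_c^3=2dr_c^3=(2d)^{-2}(2dr_c)^3=(2d)^{-2}+o((2d)^{-2})$. Any configuration with a neighbour variable distinct from the others supplies at least one extra genuine intersection factor, which by Lemma~\ref{lem:DmGn} is bounded by $O(r_c)\cdot O((2d)^{-2})=O((2d)^{-3})$ and is absorbed into the error. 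For $Z^{'}_c$ and $Z^{''}_c$ the same strategy applies, but now even the all-equal-neighbours contribution $2dr_c^4$ is already $O((2d)^{-3})$, so no leading constant needs to be extracted.

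The main obstacle is the case analysis for $Z^{(1)}_c$. Not only must I identify and correctly weight the three minimal four-edge configurations whose contributions combine to give the constant $3$, but I must also show that all leftover cases (including $s_1=-s_2$, larger intersection sets $S_1\cap S_2\setminus\{0\}$, and ribs hanging off the minimal skeletons that themselves create additional intersections with the other cluster) contribute only $o((2d)^{-2})$. This requires a careful application of the BK inequality and the convolution bounds of Lemma~\ref{lem:DmGn}, typically by converting cluster-intersection sums into products of two-point functions of the form bounded by $S^{(m,n)}_{z_c}$.
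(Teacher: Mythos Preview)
Your overall strategy---split according to the coincidence pattern of the $s_i$, extract the leading term from the all-equal case (and from the three minimal four-bond configurations for $Z^{(1)}_c$), and bound the rest via Lemma~\ref{lem:DmGn}---is exactly the paper's. Your identification of the constant $3$ in \eqref{e:Zbd} and of $2dr_c^3$, $2dr_c^4$ as the main terms in \eqref{e:Zstarbd}--\eqref{e:Zprimes} is correct.

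There is, however, a genuine gap in your error treatment for $Z^{(2)}_c$ (and by extension $Z^{(3)}_c$, $Z'_c$, $Z''_c$). Your claimed bound ``$O(r_c)\cdot O((2d)^{-2})$'' works when $|\{s_1,s_2,s_3\}|=2$: one $\V$-factor is either automatic or can be dropped, the free sum over the redundant $S_i$ produces a genuine factor $r_c$ (with no accompanying sum over $s_i$), and the remaining constraint is the $s_1\neq s_2$ part of $Z^{(1)}$. But when all three $s_i$ are distinct, neither $\V$-factor is automatic, and if you bound $|\V_{13}|\le 1$ and sum freely over $(s_3,S_3)$ you pick up $(2d-2)r_c\sim 1$, not $r_c$; the remaining $\V_{12}$-sum then gives only $O((2d)^{-2})$, which is not $o((2d)^{-2})$. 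The same issue arises for $|\{s_1,\ldots,s_4\}|=4$ in $Z'_c,Z''_c$. The fix, which the paper uses, is to keep both intersection constraints and translate them into a diagrammatic convolution bound: in the all-distinct case of $Z^{(2)}_c$ the two required intersections force at least seven bonds in total, giving an estimate of the form $\sum_{i+j=7}S^{(i,2)}_{z_c}S^{(j,3)}_{z_c}=O((2d)^{-7/2})$ (cf.\ the paper's Figure~\ref{fig:Z2bd}). No BK inequality is needed here.

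Two further remarks. First, for $Z^{(3)}_c$ the paper avoids repeating the case analysis by observing that $-\V_{23}\le 1$ gives $Z^{(3)}_c\le Z^{(2)}_c$ directly, while the all-equal case already supplies the matching lower bound $2dr_c^3$. Second, for the $s_1\ne s_2$ part of $Z^{(1)}_c$, the paper organises the remainder by noting that the parallel case $s_2=-s_1$ needs at least six bonds (hence $S^{(6,2)}_{z_c}=O((2d)^{-3})$), and that in the perpendicular case any configuration beyond the three minimal ones also requires at least six bonds; this replaces the more delicate inclusion--exclusion you allude to.
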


\begin{proof}  We consider the four equations in turn.

\smallskip \noindent \emph{Proof of \eqref{e:Zbd}.}
According to its definition in \eqref{e:Zdef},
\begin{align}
\label{e:Zdef-bis}
    Z^{(1)}_c &=
    \sum_{s_1,s_2\in \setE}
    \sum_{S_1\ni s_1} \sum_{S_2\ni s_2}
    z_c^{\abs{S_1} + \abs{S_2}}  (-{\cal V}_{12}).
\end{align}
We distinguish the two possibilities
$\abs{ \set{s_1,s_2} }=1,2$ for the vertices $s_1$, $s_2$, i.e.,
we distinguish whether the two vertices are equal or not.
If $s_1=s_2$ then automatically $-\V_{12}= 1$ because both
clusters contain $s_1$, and this contribution
gives exactly $2d r_c^2$.

For $s_1 \neq s_2$, we consider separately
the cases where $s_1$ and $s_2$ are parallel and perpendicular.
This contributes
\begin{align}\label{e:Zstar_dif}
    &
    2d \sum_{S_1\ni s_1\atop S_2\ni -s_1}
    \z{3}^{\abs{S_1}+\abs{S_2}} (-\V_{12})
    \;\;
    +
    \;\;
    2d(2d-2)\sum_{S_1\ni s_1\atop S_2\ni s_2}
    \z{3}^{\abs{S_1}+\abs{S_2}} (-\V_{12})
    .
\end{align}
For the first term, at least six steps are required for an intersection
of $S_1$ and $S_2$, so this term is bounded above by
$S_{z_c}^{(6,2)} =O(2d)^{-3}$, by Lemma~\ref{lem:DmGn}.  The leading behaviour
of the second term  is $3(2d)(2d-2)z_c^4 g(z_c)^4$
(due to a square containing $0,s_1$ and $s_2$, and where the factor $3$
takes into account the three nonzero vertices of the square at
which $S_1,S_2$ might intersect).  The remaining contributions are bounded
above by $S_{z_c}^{(6,2)} =O(2d)^{-3}$.  It is not difficult to prove a
corresponding lower bound,
to conclude that \eqref{e:Zstar_dif} equals
\begin{align}\label{e:Zstar_dif2}
    3 (2d)^2 z_c^4 g_c^4 + o(2d)^{-2}
    &
    =
    \frac{3}{(2d)^2} + o(2d)^{-2}
    ,
\end{align}
where we used \eqref{e:step0} for the last equality.
When combined with the contribution from
$s_1=s_2$, this completes the proof of \eqref{e:Zbd}.

\smallskip \noindent \emph{Proof of \eqref{e:Zstarbd}.}
By definition
\begin{align}
    Z^{(2)}_c
    &=
    \sum_{s_1,s_2,s_3 \in \setE}
    \sum_{S_1\ni s_1}\sum_{S_2 \ni s_2}\sum_{S_3\ni s_3}
    \z{3}^{\abs{S_1}+ \abs{S_2} +\abs{S_3}} \V_{12}\V_{13}.
\end{align}
We distinguish the three possibilities
$\abs{ \set{s_1,s_2,s_3} }=1,2,3$ for the vertices $s_1$, $s_2$
and $s_3$.

If $\abs{ \set{s_1,s_2,s_3} }=1$, then automatically
$\V_{12}\V_{13}=1$.
In this case, using \eqref{e:rcbd}, we find that the contribution
to $Z^{(2)}_c$
becomes simply
\ecu{e:C13-zzz}{
    2dr_c^3
    =
    \frac{ 1 }{(2d)^2}
    + o(2d)^{-2}.
}

If
$\abs{ \set{s_1,s_2,s_3} }= 2$, then we consider the case
$s_1 \neq s_2 = s_3$ (the other cases can be handled with
similar arguments).  In this case, we
use $|\V_{13}|\le 1$ and perform the sum over $S_3$ to obtain
a factor $r_c$.  The remaining sum is the case $s_1\neq s_2$ studied
in the bound on $Z^{(1)}_c$ and shown above in \eqref{e:Zstar_dif2}
to be $O(2d)^{-2}$.
Thus this contribution is an error term, since $r_c=O(2d)^{-1}$ by \eqref{e:rcbd}.

If $\abs{ \set{s_1,s_2,s_3} }= 3$, then all three
vertices are different.
At least seven bonds are required to obtain $\V_{12}\V_{13}=1$ in
this case.  As depicted in Figure~\ref{fig:Z2bd}, this contribution
is bounded above by $\sum_{i+j =7}S_{z_c}^{(i,2)}S_{z_c}^{(j,3)}$ and hence is
$O(2d)^{-7/2}$, another error term.
This completes the proof of \eqref{e:Zstarbd}.

\begin{figure}[!h]
 \centering
 \includegraphics[scale=2.8]{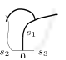}
 \quad\quad
\includegraphics[scale=2.8]{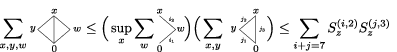}
 \caption{Intersections required for the case $\abs{ \set{s_1,s_2,s_3} }= 3$
 of $Z^{(2)}$, with corresponding bound.
 The paths have lengths at least $i_k$ and $j_l$,
 with $i_1+i_2=i$, $j_1+j_2+j_3=j$, and $i+j=7$.}
 \label{fig:Z2bd}
\end{figure}

\smallskip \noindent \emph{Proof of \eqref{e:Zstar2bd}.}
By definition,
\begin{align}
    Z^{(3)}_c
    & =
    \sum_{s_1,s_2,s_3 \in \setE}
    \sum_{S_1\ni s_1}\sum_{S_2 \ni s_2}\sum_{S_3\ni s_3}
    \z{3}^{\abs{S_1}+ \abs{S_2} +\abs{S_3}} (-\V_{12}\V_{13}\V_{23})
    .
\end{align}
If $\abs{ \set{s_1,s_2,s_3} }=1$, then automatically
$-\V_{12}\V_{13}\V_{23}=1$ and hence
\begin{equation}
    Z^{(3)}_c
    \ge
    2dr_c^3 = \frac{1}{(2d)^2}
    + o(2d)^{-2}.
\end{equation}
On the other hand the inequality $-\V_{23} \le 1$, together
with \eqref{e:Zstarbd}, shows that
\begin{equation}
    Z^{(3)}_c
    \le
    Z^{(2)}_c = \frac{1}{(2d)^2}
    + o(2d)^{-2}.
\end{equation}
This completes the proof of \eqref{e:Zstar2bd}.

\smallskip \noindent \emph{Proof of \eqref{e:Zprimes}.}
We prove that $Z^{'}_c$ and $Z^{''}_c$ are $O(2d)^{-3}$.
For each,
we distinguish the four possibilities $\abs{ \set{s_1,s_2,s_3,s_4} }=1,2,3,4$.

If $\abs{ \set{s_1,s_2,s_3,s_4} }=1$,
the products $-\V_{12}\V_{13}\V_{14}=1$ and $-\V_{12}\V_{13}\V_{24}$ are
equal to $1$,
and the sums in \eqref{e:Zprime} and \eqref{e:Zprime2} reduce to $2dr_c^4$,
which is $O(2d)^{-3}$ by \eqref{e:rcbd}.

If $\abs{ \set{s_1,s_2,s_3,s_4} }=2$,
we decompose the products $-\V_{12}\V_{13}\V_{14}$
and $-\V_{12}\V_{13}\V_{24}$
into a factor that involves the two different vertices,
and the remaining two factors.
We bound these last two factors by 1,
so their corresponding sums are bounded by $r_c^2=O(2d)^{-2}$.
The remaining sums are equal to \eqref{e:Zstar_dif},
which by \eqref{e:Zstar_dif2} is of order $O(2d)^{-2}$.

If $\abs{ \set{s_1,s_2,s_3,s_4} }=3$,
we decompose $-\V_{12}\V_{13}\V_{14}$ and $-\V_{12}\V_{13}\V_{24}$
into two factors involving the three
distinct vertices,
and one remaining factor.
We bound the latter factor by 1,
and the corresponding sum becomes $r_c=O(2d)^{-1}$.
The remaining sums are bounded by $Z^{(2)}_c$ for the case $-\V_{12}\V_{13}\V_{14}$,
and by $Z^{(2)}_c$ or $(Z^{(1)}_c)^2$ for the case $-\V_{12}\V_{13}\V_{24}$.
The overall contribution in both cases is
therefore $O(2d)^{-3}$, by \eqref{e:Zbd} and \eqref{e:Zstarbd},

If $\abs{ \set{s_1,s_2,s_3,s_4} }=4$,
an example of the required intersections for $Z'$ is
depicted in Figure~\ref{fig:bdZprime}.  By taking into account
all possibilities for $Z'$ and $Z''$, we draw the crude
conclusion that at least six bonds are needed to achieve the
required intersections, and this leads to an upper bound of the
form $\sum_{n_1+n_2+n_3=6} O(S_{z_c}^{(n_1,M)}S_{z_c}^{(n_2,M)}S_{z_c}^{(n_3,M)})$,
for a fixed value of $M$, and hence is $O(2d)^{-3}$.

This completes the proof of \eqref{e:Zprimes} and of the lemma.
\end{proof}

\begin{figure}[!h]
 \centering
 \includegraphics[scale=2.8]{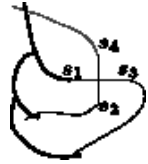}
 \caption{Example of intersections
 for the case $\abs{ \set{s_1,s_2,s_3,s_4} }= 4$ of $Z'$.}
 \label{fig:bdZprime}
\end{figure}

\begin{lemma}
\label{lem:Gamma34}
For lattice trees or lattice animals,
\begin{align}
\label{e:Gamma3bd}
    \Gamma^{(3,n)}_c & =  O(2d)^{-3} \quad \quad (n=4,5,6),
    \\
\label{e:Gamma4bd}
    \tilde\Gamma^{(4)}_c & = O(2d)^{-3}.
\end{align}
\end{lemma}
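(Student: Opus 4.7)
The plan is to follow the pattern of Lemma~\ref{lem:Gams}: for each term, I will factor out $\Gamma^{(0)}(z_c) = \ee^{2dr_c} = \Theta(1)$ (by \eqref{e:rcbd}) and re-express the remainder as a combinatorial constant times a product of the quantities $Z^{(i)}_c$ ($i=1,2,3$) and $Z'_c, Z''_c$ studied in Lemma~\ref{lem:ZZZ}. The order of each term is then read off directly from the bounds \eqref{e:Zbd}--\eqref{e:Zprimes} together with $r_c = O((2d)^{-1})$.

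For $\Gamma^{(3,n)}_c$ with $n \in \{4,5,6\}$, I regard the three $\V$-factors $\V_{ij}\V_{kl}\V_{pq}$ (in lexicographic order) as the edges of a graph on the label set, which has cardinality $n$. When $n=6$ the three edges are vertex-disjoint and the sum factorises as $(Z^{(1)}_c)^3 = O((2d)^{-3})$ by \eqref{e:Zbd}. When $n=5$, since the only way to place three edges on five labelled vertices with no isolated label is a $P_3$ (cherry) together with a disjoint edge, the sum factorises as $Z^{(2)}_c \cdot Z^{(1)}_c$, which by \eqref{e:Zbd}--\eqref{e:Zstarbd} is $O((2d)^{-2}) \cdot O((2d)^{-1}) = O((2d)^{-3})$. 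When $n=4$, the three edges form either a star $K_{1,3}$ or a path $P_4$, yielding precisely the integrals $Z'_c$ and $Z''_c$ (up to relabelling), both $O((2d)^{-3})$ by \eqref{e:Zprimes}. In each case the remaining $m-n$ unused clusters contribute $(2dr_c)^{m-n}$, which combines with $1/m!$ from \eqref{e:B} and the integer count of label orderings realising a given graph, and after summing over $m \geq n$ reassembles into $\frac{C}{n!}\Gamma^{(0)}(z_c)$ for an integer $C$, exactly as in the derivations of \eqref{e:C3-3}--\eqref{e:C23decomp}. This proves \eqref{e:Gamma3bd}.

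For $\tilde\Gamma^{(4)}_c$, I first observe that $\mathcal{I}_{rs} = \prod_{(t,u) \in A_{rs}}(1+\V_{tu}) \in [0,1]$, since each factor is the indicator that two specific clusters share no vertex other than the origin, and that $(-\V_{ij}) \in \{0,1\}$. Hence $0 \leq \tilde\Gamma^{(4)}_c \leq \Gamma^{(4)}_c$, where $\Gamma^{(4)}_c$ denotes the analogous expression with $\mathcal{I}_{rs}$ replaced by $1$ and the integrand $\V_{ij}\V_{kl}\V_{pq}\V_{rs}$. Decomposing $\Gamma^{(4)}_c$ by the cardinality $n$ of the label set (with $n \geq 4$ because four distinct edges in a simple graph require at least four vertices), the same factorisation expresses each contribution as a $\Gamma^{(0)}(z_c)$-factor times a product of $Z$-quantities; the largest such product arises when all four edges are vertex-disjoint ($n=8$), giving $(Z^{(1)}_c)^4 = O((2d)^{-4})$, and every other class gives a strictly smaller order. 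In particular $\tilde\Gamma^{(4)}_c = O((2d)^{-4}) = o((2d)^{-3})$, proving \eqref{e:Gamma4bd}.

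The main obstacle is the combinatorial bookkeeping. For each cardinality one must enumerate the isomorphism classes of edge-labelled graphs on the label set, count how many lexicographic orderings $(i,j) < (k,l) < \cdots$ realise each class, and confirm that the sum over cluster configurations factorises through the corresponding $Z$-quantity. Since only an order bound is sought, the exact integer combinatorial constants need not be computed; tracking the degrees of distinct labels and invoking the appropriate $Z$-estimate suffices.
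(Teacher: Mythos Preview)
Your treatment of \eqref{e:Gamma3bd} is correct and coincides with the paper's: the graph-theoretic classification into $K_{1,3}/P_4$ for $n=4$, cherry-plus-edge for $n=5$, and three disjoint edges for $n=6$, followed by factorisation into $Z'_c,Z''_c$, $Z^{(2)}_cZ^{(1)}_c$, and $(Z^{(1)}_c)^3$ respectively, is exactly what the paper does.

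Your argument for \eqref{e:Gamma4bd}, however, contains a genuine error. You assert that ``the largest such product arises when all four edges are vertex-disjoint ($n=8$), giving $(Z^{(1)}_c)^4 = O((2d)^{-4})$, and every other class gives a strictly smaller order.'' This monotonicity is backwards. When labels coincide, the corresponding $\V$-factor is automatically $-1$, so fewer distinct labels means \emph{less} constraint and a \emph{larger} contribution. Concretely, take four edges forming a $4$-cycle on labels $\{1,2,3,4\}$ (e.g.\ $(1,2)<(1,4)<(2,3)<(3,4)$ in lex order). The diagonal term $s_1=s_2=s_3=s_4$ already contributes $2dr_c^4 \sim (2d)^{-3}$, so this $Z^{(4,4,i)}_c$ is of order $(2d)^{-3}$, strictly larger than your claimed $(2d)^{-4}$. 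Moreover, this $C_4$ integral does not factor as a product of the $Z^{(i)}$, $Z'$, $Z''$ on your list, so your factorisation claim also fails here. Consequently your conclusion $\tilde\Gamma^{(4)}_c = O((2d)^{-4})$ is false; only $O((2d)^{-3})$ holds, which is fortunately all the lemma asks.

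The paper handles $\tilde\Gamma^{(4)}_c$ differently: after bounding $|\mathcal I_{rs}|\le 1$, for each $n\in\{4,\dots,8\}$ it drops one suitably chosen $\V$-factor (bounding it by $1$), so that the remaining three factors still involve at least four distinct labels. The sums over the dropped labels contribute $1$, $2dr_c$, or $(2dr_c)^2$, all $O(1)$, and the remaining three-factor sum is then bounded by the already-established $\Gamma^{(3,4)}$, $\Gamma^{(3,5)}$, or $\Gamma^{(3,6)}$ estimates, each $O((2d)^{-3})$. You should replace your final paragraph with this reduction.
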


\begin{proof}
We consider the two equations in turn.

\smallskip \noindent \emph{Proof of \eqref{e:Gamma3bd}.}
First we consider $\Gamma^{(3,4)}$, and will show that
\begin{align}
\label{e:Gamma34}
    \Gamma^{(3,4)}(z)
    &= \Gamma^{(0)}(z )
    \left( \frac{4}{4!}Z^{'}(z)
    +
    \frac{12}{4!} Z^{''}(z) \right)    .
\end{align}
This is sufficient, by \eqref{e:Zprimes} together with the fact
that $\Gamma^{(0)}_c=\ee^{ 2dr_c } = O(1)$ by \eqref{e:Ar}
and \eqref{e:rcbd}.
To prove \eqref{e:Gamma34}, we are considering the case where
the set of labels $\set{i,j,k,l,p,q}$ in
\eqref{e:Def_Jcal3} has cardinality $4$,
and we may assume the labels are $1,2,3,4$.
We find 16 possible arrangements for
the labels, which
can be reduced to the two cases:
\\ (i)  Three labels are equal and the other three are different
    from the first ones and among them,
    e.g., $i=k=p=1$, $j=2$, $l=3$ and $q=4$.
    There are 4 arrangements of this type.
\\
(ii)  There are two pairs of equal labels and a pair of distinct labels,
    e.g., $i=k=1$, $j=p=2$, $l=3$ and $q=4$.
    There are 12 arrangements of this type.
\\
Interchanging the sums in which arise from
substitution of \eqref{e:Def_Jcal3} into \eqref{e:B}
(with $i=3$) and using symmetry,
as in the proof of Lemma~\ref{lem:Gams},
gives \eqref{e:Gamma34}.

For $\Gamma^{(3,5)}_c$, one of the factors
$\V_{ij}$ has labels that do not repeat,
and the other two factors share one of the labels.
The sums over the $s$ and $S$ with the two non-repeating labels
yield $Z^{(1)}_c=O(2d)^{-1}$.
The sums over the remaining labels
are bounded above by $Z^{(2)}_c=O(2d)^{-2}$.
It is then straightforward to verify
that $\Gamma^{(3,5)}_c \le O(2d)^{-3}$.

For $\Gamma^{(3,6)}_c$, the six sums over $s$ give $(Z^{(1)})^3=O(2d)^{-3}$,
and this leads to
$\Gamma^{(3,6)}_c \le O(2d)^{-3}$.
This completes the proof of \eqref{e:Gamma3bd}.

\smallskip \noindent \emph{Proof of \eqref{e:Gamma4bd}.}
We use the bound $\abs{{\cal I}_{rs}}\le1$ in
\eqref{e:Def_Jcal3} and \eqref{e:Gam4til} to obtain
\begin{align}
    |\tilde\Gamma^{(4)}_c|
    &\le\sum_{m=4}^{\infty}\frac{1}{m!} \sum_{s_1,\dots,s_m\in {\cal E}}
    \sum_{S_1\ni s_1} z_c^{|S_1|} \cdots  \sum_{S_m\ni s_m}
    z_c^{|S_m|}
    \sum_{1\le i<j \le m}
    \sum_{ (k,l)\in A_{ij} }  \, \sum_{(p,q)\in A_{kl}} \, \sum_{(r,s)\in A_{pq}}
    \V_{ij} \V_{kl} \V_{pq} \V_{rs}
    .
\label{e:Gam4tilbd}
\end{align}
We denote the cardinality of the label
set by $n=\abs{ \set{i,j,k,l,p,q,r,s} }$, so $n \in \{4,5,6,7, 8\}$,
and exchange the sums over vertices and labels.
As in \eqref{e:Gamma34}, this allows us to rewrite the upper bound
of \eqref{e:Gam4tilbd} in the form
\begin{align}
    |\tilde\Gamma^{(4)}_c|
    &\le
    \Gamma^{(0)}_c
    \sum_{n=4}^8
    \sum_i
    \alpha_{n,i} Z^{(4,n,i)}_c,
\end{align}
where the sum over $i$ is a finite sum, the $\alpha_{n,i}$ are constants
whose values are immaterial, and each $Z^{(4,n,i)}$ is of the
form
\begin{align}
    Z^{(4,n,i)}_c
    & =
    \sum_{s_1,\dots,s_n\in {\cal E}}
    \sum_{S_1\ni s_1} z_c^{|S_1|} \cdots  \sum_{S_n\ni s_n}
    z_c^{|S_n|}
    \V^{(n,i)}
    ,
\end{align}
with $\V^{(n,i)}$ a product of $4$ factors of $\V_{ab}$
having $n$ distinct labels in all.
Since $\Gamma^{(0)}_c=O(1)$ (as observed below \eqref{e:Gamma34}),
it suffices to show that each $Z^{(4,n,i)}_c$ is $O(2d)^{-3}$.

For $n=4,5$ or 6,
we substitute one of the factors in
$\abs{ \V_{ij} \V_{kl} \V_{pq} \V_{rs} }$ by 1,
with the restriction that the remaining three factors have at least
four different labels.
The sums involving the replaced factor yield 1 or $2dr_c$ or $(2dr_c)^2$
depending on whether this factor has 0,1 or 2 distinct labels
from the remaining three factors;
all three cases are $O(1)$ by \eqref{e:rcbd}.
The sums involving the other
three factors reduce to the cases $\Gamma^{(3,4)}_c$,
$\Gamma^{(3,5)}_c$ or $\Gamma^{(3,6)}_c$
which by \eqref{e:Gamma3bd} are $O(2d)^{-3}$.

For $n=7$ or 8,
we consider three factors in the product $\abs{ \V_{ij} \V_{kl} \V_{pq} \V_{rs} }$
that have six different labels and bound the fourth factor by 1.
The sums involving the fourth factor yield $2dr_c$ and $(2dr_c)^2$
for $n=7$ and $n=8$, respectively.
By \eqref{e:rcbd}, in both cases the contribution is $O(1)$.
By the bound on $\Gamma^{(3,6)}_c$ of \eqref{e:Gamma3bd},
the sums involving the six distinct labels is $O(2d)^{-3}$.
This completes the proof of \eqref{e:Gamma4bd} and of the lemma.
\end{proof}

\subsection{Estimates for lace expansion}
\label{sec:Qestimates}

Throughout this section, we assume the leading behaviour \eqref{e:zcleading}
(proved in the present paper in Lemma~\ref{lem:zcleading})
but do not make use of higher order asymptotics.
We prove three lemmas that were used in
Sections~\ref{sec:term2}--\ref{sec:term3}.
Recall from \eqref{e:Qdef} the definition
\begin{equation}
\label{e:Qdef-bis}
    Q(x)=
    \sum_{C_0\ni 0}   \sum_{C_x \ni x}
    z_c^{|C_0|+|C_x|} \1_{C_0 \cap C_x \neq \varnothing}
    .
\end{equation}
The following lemma gives a good estimate for $Q(s)$ when $\|s\|_1=1$,
and gives a crude (but sufficient) estimate for $\|x\|_1>1$.

\begin{lemma}
\label{lem:Q}
For lattice trees or lattice animals, and for a neighbour $s$ of the origin,
\begin{equation}
\label{e:Q_2eq}
    Q(s)
    =  2z_cg_c^3  -  \frac{ \ee^2 }{(2d)^2}  +  o(2d)^{-2}
    = \frac{2\ee^2}{2d} + o(2d)^{-1}.
\end{equation}
In addition, for any $x$, $Q(x) \le O(2d)^{-\frac 12 \|x\|_1}$.
\end{lemma}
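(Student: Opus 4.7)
The plan is to decompose the pairs $(C_0,C_s)$ contributing to $Q(s)$ into three disjoint cases according to the role of the vertices $0,s$ in $C_0\cap C_s$:
(i) $s\in C_0$ (the intersection automatically contains $s$);
(ii) $0\in C_s$ and $s\notin C_0$ (the intersection contains $0$);
(iii) $s\notin C_0$ and $0\notin C_s$, so the common vertex is some $y\notin\{0,s\}$.
Cases (i) and (ii) assemble cleanly to
\[
    g_c G_{z_c}(s) + (g_c - G_{z_c}(s))G_{z_c}(s) = 2g_c G_{z_c}(s) - G_{z_c}(s)^2
\]
by partitioning on whether $s\in C_0$, and independently on whether $0\in C_s$ given $s\notin C_0$.

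The main work is case (iii). I would expand over the common vertex $y$: the triangle inequality forces $\|y\|_1+\|y-s\|_1\ge3$, and contributions from $y$ with $\|y\|_1+\|y-s\|_1\ge5$ are absorbed into $\sum_{m\ge5}S_{z_c}^{(m,2)}=o((2d)^{-2})$ via Lemma~\ref{lem:DmGn}. The leading $(2d)^{-2}$ piece then comes from $y$ an orthogonal nearest neighbour of $0$ (the $2(d-1)$ vertices $\pm e_j$, $j\ne1$, writing $s=e_1$) or of $s$ (the $2(d-1)$ vertices $s\pm e_j$, $j\ne1$); the remaining distance-$3$ candidates $y=-s$ and $y=2s$ contribute only $O((2d)^{-3})$ after the avoidance condition forces the far cluster to detour via a length-$4$ path. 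For each orthogonal $y$, the constraint $0\notin C_s$ (respectively $s\notin C_0$) kills exactly one of the two length-two self-avoiding walks that would otherwise connect the nearer cluster centre to $y$, leaving a single $z_c^2g_c^3$ factor which pairs with a $z_c g_c^2$ factor from the other cluster; summation over the $\sim 2d$ eligible $y$'s in each symmetric class yields $\ee^2/(2d)^2+o((2d)^{-2})$ per class, totalling $2\ee^2/(2d)^2+o((2d)^{-2})$ for case~(iii).

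Combining the three cases gives
\[
    Q(s) = 2g_c G_{z_c}(s) - G_{z_c}(s)^2 + \frac{2\ee^2}{(2d)^2} + o((2d)^{-2}).
\]
Invoking the leading asymptotics of Lemma~\ref{lem:zcleading} together with the refinement $G_{z_c}(s)=z_cg_c^2(1+o(1))$, obtained by a short bootstrap from the tree identity $G_{z_c}(s)=z_c(g_c-G_{z_c}(s))^2+G^{(\ge3)}_{z_c}(s)$ using the direct estimate $G^{(\ge3)}_{z_c}(s)\le 2(d-1)z_c^3g_c^4+o((2d)^{-2})$ from the $2(d-1)$ length-three self-avoiding walks (with the analogous argument for animals absorbing the cycle contribution via $g_\circ(z_c)$), one recovers the algebraic rearrangement as $2z_cg_c^3-\ee^2/(2d)^2+o((2d)^{-2})$, whose leading behaviour agrees with the stated $\tfrac{2\ee^2}{2d}+o((2d)^{-1})$. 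The main obstacle is that Lemma~\ref{lem:DmGn}'s direct estimate $S_{z_c}^{(3,2)}=O((2d)^{-3/2})$ is too coarse at the target precision, so the $(2d)^{-2}$ coefficient in case~(iii) must be extracted by the explicit orthogonal-nearest-neighbour combinatorics described above, with Lemma~\ref{lem:DmGn} reserved for the longer-distance tail.

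For the crude bound $Q(x)\le O((2d)^{-\|x\|_1/2})$, I would apply the BK-type inequality $Q(x)\le\sum_y G_{z_c}(y)G_{z_c}(y-x)=(G_{z_c}*G_{z_c})(x)$, group summands by $m=\|y\|_1+\|y-x\|_1\ge\|x\|_1$, and observe that each shell is dominated termwise by $S_{z_c}^{(m,2)}\le C_m(2d)^{-m/2}$ from Lemma~\ref{lem:DmGn}; the sum over $m\ge\|x\|_1$ is controlled by its first term, yielding the claim.
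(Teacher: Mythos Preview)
Your decomposition into cases (i)--(iii) is sound and, together with your case~(iii) value $2\ee^2/(2d)^2$, is consistent with the paper's answer. But there is a real precision gap in the final step. Your formula
\[
    Q(s) = 2g_c G_{z_c}(s) - G_{z_c}(s)^2 + \frac{2\ee^2}{(2d)^2} + o(2d)^{-2}
\]
expresses $Q(s)$ through $G_{z_c}(s)$, and to recover the target $2z_cg_c^3-\ee^2/(2d)^2+o(2d)^{-2}$ you need $G_{z_c}(s)$ to precision $o(2d)^{-2}$ (since $2g_cG_{z_c}(s)$ carries any error in $G_{z_c}(s)$ multiplied by $g_c\asymp 1$). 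The input you quote, $G_{z_c}(s)=z_cg_c^2(1+o(1))$, is only $G_{z_c}(s)=z_cg_c^2+o(2d)^{-1}$, so your conclusion has error $o(2d)^{-1}$, not $o(2d)^{-2}$. Moreover, the bootstrap identity you write for trees, $G_{z_c}(s)=z_c(g_c-G_{z_c}(s))^2+G^{(\ge3)}_{z_c}(s)$, is incorrect: removing the bond $\{0,s\}$ leaves two subtrees that must be \emph{mutually vertex-disjoint}, not merely each avoiding the other's root, so the first term is $z_c(g_c^2-Q(s))$ rather than $z_c(g_c-G_{z_c}(s))^2$. That makes the bootstrap circular at exactly the precision you need.

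The paper avoids this by never introducing $G_{z_c}(s)$. It decomposes $Q(s)$ by the length $n$ of the shortest combined path from $0$ through an intersection point to $s$: the $n\ge5$ part is an error via $S^{(5,2)}_{z_c}$; the $n=3$ part gives $4\ee^2/(2d)^2+o(2d)^{-2}$ directly; and the $n=1$ part (the bond $\{0,s\}$ in one of the clusters) is computed exactly as $2z_cg_c^3-2z_cg_cQ(s)-z_c^2(g_c^2-Q(s))^2$. This yields a linear equation $(1+2z_cg_c)Q(s)=2z_cg_c^3+3\ee^2/(2d)^2+o(2d)^{-2}$, which is solved for $Q(s)$ using only $z_c\sim(2d\ee)^{-1}$ and $g_c\sim\ee$. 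In this way Lemma~\ref{lem:Q} is established from Lemmas~\ref{lem:DmGn} and~\ref{lem:zcleading} alone, and the estimates on $G_{z_c}(s)$ are derived afterwards \emph{from} it --- the reverse of your proposed order.
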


\begin{proof}
In \eqref{e:Qdef-bis},
the clusters $C_0$ and $C_x$ only
contribute to the sum in $Q(x)$ if they have a vertex in common,
say $y$. There is a path connecting 0 and $y$ contained in $C_0$,
and a path connecting $y$ and $x$ contained in $C_x$,
and we can choose these paths to intersect only at $y$.
We
denote the paths by $\omega^0$ and $\omega^x$, respectively.
The union of $\omega^0$ and $\omega^x$ forms a path connecting
0 to $x$ and passing through $y$, which we call $\omega$.
It has length at least $\|x\|_1$, and this leads to the upper bound
$Q(x) \le S_{z_c}^{(\|x\|_1,2)}$.  Together with Lemma~\ref{lem:DmGn},
this proves that $Q(x) \le O(2d)^{-\frac 12 \|x\|_1}$.

It remains to prove the first equality of \eqref{e:Q_2eq},
as the second equality then follows
immediately from Lemma~\ref{lem:zcleading}.
We write $Q^{n}(s)$ to refer to the contribution to $Q(s)$
due to configurations where there exists such a path  $\omega$ of
length $n$ (the union of $\omega^0$ and $\omega^s$ as in the
previous paragraph) and no shorter path.  Since
\begin{equation}
\label{e:Q5bd}
    Q^{\ge 5}(s) \le S_{z_c}^{(5,2)} \le O(2d)^{-5/2},
\end{equation}
we can
restrict attention to $Q^n$ for $n \le 4$.
For the case of lattice animals,
the contributions in which $C_0$ or $C_s$ has a cycle containing
both  0 and $s$ is easily seen to be $o(2d)^{-2}$.
Therefore, we assume henceforth that each of $C_0$ and $C_s$ does not have
a cycle that contains both $0$ and $s$.

For $Q^3$, we have
$\omega=(0,s',s'+s,s)$ for some neighbour $s'$ of the origin perpendicular to $s$.
There are $2d-2$ such paths and each of them has
four possibilities for $y$. If we treat the clusters
attached to the vertices in $\omega^0$ and $\omega^s$
as five independent clusters, we obtain the upper bound
\begin{equation}
    Q^{3}(s)  \leq    4(2d-2)\z{3}^3 g_c^5
    =    \frac{4\ee^2}{(2d)^2}+ o(2d)^{-2},
\end{equation}
with the last equality due to Lemma~\ref{lem:zcleading}.
For a lower bound, we use inclusion-exclusion and subtract
from the upper bound the contribution when there are pairwise
intersections among the ribs that belong to the same path,
either $\omega^0$ or $\omega^s$. This gives
\begin{equation}
    Q^{3}(s)  \geq    4(2d-2)\z{3}^3g_c^3
    \sqb{  g_c^2 - 4Q(s) - 2Q(s'+s)   }
    =    \frac{4\ee^2}{(2d)^2}+ o(2d)^{-2}
\end{equation}
(subtraction of $Q(s)$ in the middle expression also accounts
for configurations which are counted by $Q^1(s)$ rather than $Q^3(s)$).
We conclude that
\begin{equation}
    Q^{3}(s)   = \frac{4\ee^2}{(2d)^2}
    +    o(2d)^{-2}.
\end{equation}

For $Q^1$, the path $\omega$ is given by $\omega=(0,s)$.
This means that the bond $\{0,s\}$ is contained in either
$C_0$ or $C_s$, say in $C_0$. In this case, $C_0$ consists
of the edge $\{0,s\}$ and two nonintersecting subclusters,
$C^*_0$ and $C^*_s$, the first one attached at 0 and the
second at $s$.
Let $\U^*_{01}=-1$ if the subclusters $C^*_0$ and $C^*_s$
have a common vertex, and 0 otherwise.
Exchanging the roles of
$C_0$ and $C_s$, and subtracting the contribution due to
the event in which both clusters $C_0$ and $C_s$ contain
the bond $\{0,s\}$, yields
\begin{align}
    Q^1(s)
    & =
    2\z{3}g_c
    \sum_{C^*_0\ni0}\sum_{C^*_s\ni s}
    \z{3}^{|C^*_0| + |C^*_s| }\ob{1+ \U^*_{01}}
    -  \z{3}^2 \bigg [  \sum_{C^*_0\ni0}\sum_{C^*_s\ni s}
    \z{3}^{|C^*_0| + |C^*_s|} \ob{1+ \U^*_{01}}\bigg ]^2
    \nonumber \\
    & =
    2\z{3} g_c ^3 - 2\z{3} g_c Q(s)
    -  \z{3}^2 \sqb{  g_c^2 -  Q(s) }^2
    .
\end{align}

Since $z_c^2 Q(s)=o(2d)^{-2}$,
together with the contributions analysed previously this gives
\begin{align}
    Q(s)
    & =
    2\z{3} g_c ^3 - 2\z{3} g_c Q(s)
    -  \z{3}^2  g_c^4
    + \frac{4\ee^2}{(2d)^2}
    +    o(2d)^{-2}
    .
\end{align}
We conclude from this that
\begin{equation}
\label{e:Qproduct}
    (1+2z_cg_c)Q(s) = 2\z{3} g_c ^3 + \frac{3\ee^2}{(2d)^2} + o(2d)^{-2}.
\end{equation}
The factor multiplying $Q(s)$ is equal to $1+2(2d)^{-1}+o(2d)^{-1}$,
so we obtain $Q(s)$ by multiplying the right-hand side
of \eqref{e:Qproduct} by
$1-2(2d)^{-1}+o(2d)^{-1}$.  This yields
the first equality of \eqref{e:Q_2eq} and completes the proof.
\end{proof}

The next lemma is applied in
Lemmas~\ref{lem:Pi1-2} and \ref{lem:Pi2-bis}.
For a neighbour $s$ of the origin, we define
\begin{align}
\label{e:Qstardef}
    Q^*(s)
    &
    = \sum_{C_0\ni0}
    \sum_{C_1\ni s }
    \sum_{C_2\ni0}
    \z{3}^{\abs{R_0}+\abs{R_1}+\abs{R_2}} \U_{01}\U_{12}
    .
\end{align}

\begin{lemma}
\label{lem:QPi1}
For lattice trees or lattice animals, and for a neighbour $s$ of the origin,
\begin{align}
\label{e:R0123}
    Q^*(s)
    &
    = \frac{\ee^3}{2d} +  o(2d)^{-1},
\end{align}
\end{lemma}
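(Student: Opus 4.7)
The plan is to decompose $Q^*(s)$ based on whether $0\in C_1$, in the spirit of the proof of Lemma~\ref{lem:Q}. Observe first that $\U_{01}\U_{12}=\1_{C_0\cap C_1\neq\emptyset}\,\1_{C_1\cap C_2\neq\emptyset}$, since each $\U_{ij}\in\{0,-1\}$ takes the value $-1$ exactly when the corresponding intersection is nonempty. When $0\in C_1$, both intersections automatically contain $0$, so $\U_{01}\U_{12}=1$ regardless of $C_0,C_2\ni 0$, and the triple sum factorises as
\[
\Bigl(\sum_{C_0\ni 0}z_c^{|C_0|}\Bigr)\Bigl(\sum_{C_1\ni 0,s}z_c^{|C_1|}\Bigr)\Bigl(\sum_{C_2\ni 0}z_c^{|C_2|}\Bigr) \;=\; g_c^{\,2}\,G_{z_c}(s).
\]
By Lemma~\ref{lem:zcleading} and Lemma~\ref{lem:G1}, $g_c^{\,2}G_{z_c}(s)=\ee^3/(2d)+o((2d)^{-1})$, which is exactly the claimed leading order.

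It remains to show that configurations with $0\notin C_1$ contribute $o((2d)^{-1})$. There the intersections occur at some $y\in C_0\cap C_1$ and $y'\in C_2\cap C_1$ with $y,y'\ne 0$, and a union bound over $(y,y')$ together with the identity $\sum_{C\ni 0,y}z_c^{|C|}=G_{z_c}(y)$ yields
\[
\text{error}\;\le\;\sum_{y,y'\ne 0}G_{z_c}(y)\,G_{z_c}(y')\sum_{C_1\ni s,y,y'}z_c^{|C_1|}.
\]
The inner three-point sum is bounded by the BK-type tripod estimate $g_c^{-2}\sum_u G_{z_c}(u-s)G_{z_c}(y-u)G_{z_c}(y'-u)$ (in the spirit of \cite[Lemma~2.1]{HS90b}), and the resulting convolutions are controlled by Lemma~\ref{lem:DmGn}. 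The cheapest surviving configuration $y=y'=s$ contributes at most $G_{z_c}(s)^2 g_c=O((2d)^{-2})$, while every other arrangement forces extra path length and picks up additional factors of $(2d)^{-1/2}$ via the $S^{(m,n)}_{z_c}$-estimates. Summing, the error is $O((2d)^{-2})=o((2d)^{-1})$, and adding this to the leading contribution $g_c^{\,2}G_{z_c}(s)$ yields \eqref{e:R0123}.

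The main obstacle is the rigorous control of this error sum despite the divergence of $\chi(z_c)$: one cannot integrate $G_{z_c}$ freely over $\Zd$. The delicate point is to exploit the constraint $y\ne 0$, which effectively replaces $G_{z_c}(y)$ by $G^{(1)}_{z_c}(y)$ and triggers the uniform bound $\sup_y G^{(1)}_{z_c}(y)\le O((2d)^{-1/2})$ coming from $S^{(1,2)}_{z_c}$ in Lemma~\ref{lem:DmGn}, and to apply this gain simultaneously for both $y$ and $y'$ so as to beat the $(2d)^{-1}$ scale set by the leading term.
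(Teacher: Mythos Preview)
Your decomposition on the event $\{0\in C_1\}$ is valid and yields the correct main term $g_c^2 G_{z_c}(s)=\ee^3/(2d)+o((2d)^{-1})$ via Lemmas~\ref{lem:zcleading} and~\ref{lem:G1}; there is no circularity since Lemma~\ref{lem:G1} depends only on Lemmas~\ref{lem:zcleading}, \ref{lem:DmGn} and~\ref{lem:Q}. This differs from the paper's proof, which instead splits on whether the \emph{bond} $\{0,s\}$ lies in $C_1$ and extracts the main term as $z_cg_c^4-z_cg_c^2Q(s)$ using only Lemmas~\ref{lem:zcleading} and~\ref{lem:Q}. Your split is arguably more natural, and both produce the same residual case $0\notin C_1$ in the error.

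There is, however, a gap in your error control. After the tripod bound you obtain essentially
\[
\sum_{u} G_{z_c}(u-s)\Bigl[\bigl(G^{(1)}_{z_c}*G_{z_c}\bigr)(u)\Bigr]^2,
\]
and your final paragraph proposes to replace each bracket by its supremum $S^{(1,2)}_{z_c}=O((2d)^{-1/2})$. But then the remaining sum $\sum_u G_{z_c}(u-s)=\chi(z_c)$ diverges; even if you keep one convolution intact and bound the other by its supremum, you get $S^{(1,2)}_{z_c}\cdot S^{(1,3)}_{z_c}=O((2d)^{-1})$, which does \emph{not} beat the scale of the leading term. The case split you sketch in the penultimate paragraph is what actually saves the day, but it must be pushed further: separating $s\in C_0$ versus $s\notin C_0$ (and likewise for $C_2$), and within the case $y,y'\neq s$ separating $u=s$ from $u\neq s$, replaces enough two-point functions by $G^{(1)}_{z_c}$ (or $G^{(2)}_{z_c}$) that every piece is bounded by quantities like $G_{z_c}(s)\,S^{(2,2)}_{z_c}$ or $S^{(1,2)}_{z_c}\,S^{(2,3)}_{z_c}$, each $O((2d)^{-3/2})$ or better. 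The mechanism you describe in the last paragraph---gaining $(2d)^{-1/2}$ from each of $y\neq 0$ and $y'\neq 0$---is one step short; you need a third half-power, and it comes from the constraint that in the non-trivial cases the intersection point also differs from $s$.
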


\begin{proof}
It is straightforward to verify that the contribution when $C_1$
contains a cycle containing $0$ and $s$ produces an error term,
so we assume that there is no such cycle.
If $C_1$
contains the bond $(0,s)$, then
$\U_{01}\U_{12}=1$.
In this case, we can regard $C_1$ as consisting
of the edge $(0,s)$ and two non-intersecting
clusters $C_1^0$ and $C_1^1$ attached at 0 and $s$, respectively.
Let $\U^*_{01}=-1$ if  $C_1^0$ and $C_1^1$ have a
common vertex, and 0 otherwise.
We obtain
\ecus{eq:sumLT_3}{
    Q^*(s)
    &= \z{3}g_c^2 \sum_{R_1^0\ni 0,  R_1^1\ni e_1}
    \z{3}^{|R_1^0|+|R_1^1|}\ob{1+\U^*_{01}}
    + \!\!\!\! \sum_{ \substack{R_0,R_2\ni0\\ R_1\ni e_1, R_1\not \ni(0,e_1)} }
    \z{3}^{\abs{R_0}+\abs{R_1}+\abs{R_2}}\U_{01}\U_{12}
    +o(2d)^{-1}.
}
Arguments of the type used several times previously show that
the second sum on the right-hand side is $o(2d)^{-1}$.
Therefore,
\begin{align}
\label{e:R0123-bis}
    Q^*(s)
    &= \z{3}g_c^4 - \z{3}g_c^2 Q(s) +o(2d)^{-1}
    = \frac{\ee^3}{2d} +  o(2d)^{-1},
\end{align}
where the second equality is due to Lemmas~\ref{lem:zcleading}
and \ref{lem:Q}.
\end{proof}

Finally, we prove the following lemma, which
is a restatement of Lemma~\ref{lem:Pi2}.  It provides an important
ingredient in the proof of Theorem~\ref{thm:Pi-2terms-bis}.

\begin{lemma}
\label{lem:Pi2-bis}
For lattice trees or lattice animals,
\begin{align}
\label{e:Pi2-2-bis}
    \hat\Pi^{(2)}_{z_c}
    & =  \frac{11\ee}{(2d)^2}  + o(2d)^{-2}.
\end{align}
\end{lemma}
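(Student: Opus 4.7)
The plan is to follow the strategy used in the proof of Lemma~\ref{lem:Pi1-2}, adapted to the two-lace case. Using \eqref{eq:Pi2} (for trees) and \eqref{eq:Pia2} (for animals), I would write $\hat\Pi^{(2)}_{z_c}=\sum_{x\in\Zd}\Pi^{(2)}_{z_c}(x)$ and decompose the sum according to backbone length $n=|\omega|$ (or $|B|$) and to the two types of 2-laces appearing in \eqref{e:Lcal2}: Type~1, $L=\{0j,jn\}$ (sharing vertex $j$), and Type~2, $L=\{0j,in\}$ with $0<i<j<n$ (overlapping).

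The first step is to control error terms. Every nonzero contribution requires the two rib intersections $\U_{\cdot\cdot}=-1$ forced by the lace edges, and as in the derivation of \eqref{e:Pi1crude}, these intersections together with the backbone can be re-expressed in terms of $S^{(m,n)}_{z_c}$ quantities to which Lemma~\ref{lem:DmGn} applies. A configuration whose total required bond count is at least $5$ therefore contributes at most $S^{(5,M)}_{z_c}=O(2d)^{-5/2}=o(2d)^{-2}$. A case analysis of Type~2 laces shows that they require $n\ge 3$ together with enough additional intersection-forcing bonds to exceed this threshold, so all Type~2 contributions are $o(2d)^{-2}$; similarly, Type~1 laces with $n\ge 3$ can be shown to be $o(2d)^{-2}$.

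Thus only Type~1 laces with $n=2$ and $L=\{01,12\}$ contribute at leading order. For these, $\sum_x\Pi^{(2)}_{z_c}(x)$ reduces to a sum over walks $\omega=(0,s_1,s_1+s_2)$ with $s_1,s_2\in\setE$ of three-rib sums carrying the factor $\U_{01}\U_{12}$ together with the compatible-pair factors $1+\U_{i'j'}$. I would split this into cases by geometry: $s_2=-s_1$ (walk returns to $0$), $s_2=s_1$ (walk goes straight), and $s_2\ne\pm s_1$ (walk turns). In the first case, Lemma~\ref{lem:QPi1} together with Lemma~\ref{lem:zcleading} yields $2d\cdot z_c^2\cdot Q^*(s_1)=\ee/(2d)^2+o(2d)^{-2}$. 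The other two cases can be treated in the same spirit: one identifies the minimal rib configurations that realise both required intersections, computes the leading contribution using $g_c$, $Q(\cdot)$, and the leading formulas of Lemmas~\ref{lem:zcleading} and \ref{lem:Q}, and applies inclusion-exclusion against further rib overlaps forced or forbidden by the compatible-pair factors.

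The main obstacle will be the careful combinatorial enumeration of all leading contributions at order $(2d)^{-2}$: identifying the minimal geometric configurations in each subcase, correctly applying inclusion-exclusion to avoid double counting between subcases and against the compatible-pair factors in \eqref{eq:Pi2}, and collecting the arising numerical coefficients. The target value $11\ee/(2d)^2$ should emerge as the sum of the leading contributions from the three geometric cases listed above. An additional subtlety is verifying that for lattice animals no extra $\1_{\rm a}$-dependent term appears at this order, consistent with the absence of $\1_{\rm a}$ in \eqref{e:Pi2-2-bis}: all animal-specific configurations involving cycles through the backbone or the ribs should contribute only $o(2d)^{-2}$ at this precision, by bounds analogous to \eqref{e:Pia0} obtained from Lemma~\ref{lem:DmGn}.
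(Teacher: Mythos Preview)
Your error analysis in the second paragraph is incorrect, and as a result you will miss contributions totalling $6\ee/(2d)^2$. The key oversight is that the backbone $\omega$ in \eqref{eq:Pi2} is a simple random walk path, \emph{not} a self-avoiding walk, so it may revisit vertices. When this happens, two ribs can be rooted at the same lattice point, making the corresponding $\U$-factor equal to $-1$ automatically, with no additional intersection-forcing bonds needed. Concretely: for the Type~2 lace $L=\{02,13\}$ with $|\omega|=3$ and $\omega=(0,x,0,x)$, both $\U_{02}$ and $\U_{13}$ are automatically $-1$, so only the three backbone bonds are present and the contribution is $2d\,z_c^3 g_c^4 = \ee/(2d)^2 + o(2d)^{-2}$. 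Likewise, for the Type~1 lace $L=\{01,13\}$ with $|\omega|=3$ and $\omega=(0,x,y,x)$, the factor $\U_{13}$ is automatic and only one genuine rib intersection must be created; this and its mirror lace $\{02,23\}$ together contribute $4\ee/(2d)^2$. Finally, for the Type~1 lace $L=\{02,24\}$ with $|\omega|=4$ and $\omega=(0,s,0,s',0)$, both intersections are automatic and the contribution is $(2d)^2 z_c^4 g_c^5 = \ee/(2d)^2$.

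The paper's own proof organises the computation by backbone length and obtains $\hat\Pi^{(2,2)}_{z_c}=5\ee/(2d)^2$, $\hat\Pi^{(2,3)}_{z_c}=5\ee/(2d)^2$, $\hat\Pi^{(2,4)}_{z_c}=\ee/(2d)^2$, and $\hat\Pi^{(2,>4)}_{z_c}=o(2d)^{-2}$. Your $n=2$ analysis would correctly recover the $5\ee/(2d)^2$, but your blanket dismissal of $n\ge 3$ discards the remaining $6\ee/(2d)^2$. To repair the argument, you must retain all backbones of length $3$ and $4$ that revisit vertices, enumerate the laces for each, and extract their leading contributions exactly as above; only configurations requiring at least five bonds in total (backbone plus genuine intersection bonds) can be bounded by $\sum_{i+j=5} S^{(i,4)}_{z_c} S^{(j,3)}_{z_c}=O(2d)^{-5/2}$ and discarded.
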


\begin{proof}
We give the proof only for the case of
lattice trees.  With minor changes, the proof extends to lattice animals.
Recall from \eqref{eq:Pi2} that
\ecu{eq:Pi2-bis}{
    \hat\Pi^{(2)}_{z_c}
    =
    \sum_{x \in \Zd}
    \Pi^{(2)}_{z_c}(x)   =
    \sum_{x \in \Zd}
    \sum_{\omega\in\mathcal W(x)\atop |\omega|\ge 2}
    \z{3}^{|\omega|}
    \sqb{ \prod_{i=0}^{|\omega|} \sum_{R_i\ni \omega(i)}\z{3}^{|R_i|}}
    \sqb{   \sum_{L \in\mathcal L ^{(2)}[0,|\omega|]} \,
    \prod_{ij\in L}\U_{ij}
    \prod_{i'j'\in \mathcal C(L)}\ob{1+ \U_{i'j'} }   },
}
where the set of laces is
\begin{equation}
    \mathcal L^{(2)}[0,|\omega|]
    =
    \big\{ \set{0j,j|\omega|} : 0<j<|\omega| \big\}
    \cup
    \big\{\set{0j,i|\omega|} : 0<i<j<|\omega| \big\},
\end{equation}
and where the set $\mathcal{C}(L)$ compatible with $L$ is defined
below \eqref{eq:Pi2}.
Let $\Pi^{(2,n)}_{z_c}(x)$ denote the contribution to
$\Pi^{(2)}_{z_c}(x)$ due to $|\omega|=n$ on the right-hand side of
\eqref{eq:Pi2-bis}.
We will show that
\begin{align}
\label{e:Pi42}
    \hat \Pi^{(2,2)}_{z_c}
    &  =
    \frac{5\ee}{(2d)^2}  +  o(2d)^{-2},
    \\
\label{e:Pi43}
    \hat \Pi^{(2,3)}_{z_c}
    &  =   \frac{5\ee}{(2d)^2}  +  o(2d)^{-2},
        \\
\label{e:Pi44}
    \hat \Pi^{(2,4)}_{z_c}
    &  =   \frac{\ee}{(2d)^2}  +  o(2d)^{-2},
    \\
\label{e:Pi45}
    \hat \Pi^{(2,>4)}_{z_c}
    &  =     o(2d)^{-2},
\end{align}
which proves \eqref{e:Pi2-2-bis}.

Before entering into the details, we recall diagrammatic
estimates for lattice trees that have
been developed and discussed at length in
\cite{Hara08,HS90b,Slad06} (for lattice animals the best reference
is \cite{Hara08}).
These techniques are based on the diagrams in Figure~\ref{fig:bdPi2},
which inspire the upper bound
\begin{equation}
\label{e:Pi2ST}
    \hat\Pi^{(2)}_{z_c} \le 2 S_{z_c}^{(1,4)} S_{z_c}^{(1,3)}.
\end{equation}
Here the occurrence of $S^{(1,n)}$ on the right-hand side is
connected with the fact that each loop in the bounds on diagrams
in Figure~\ref{fig:bdPi2} must
consist of at least one bond, while the appearance of $3$ and $4$ is due to the
$7$ lines in the adjacent squares, each of which represents a two-point
function.  When we consider configurations for which it is guaranteed
that those two-point functions must take at least $k$ steps in total,
the upper bound \eqref{e:Pi2ST} can be improved to an upper bound
\begin{equation}
\label{e:Pi2ST-k}
    2\sum_{i+j=k} S_{z_c}^{(i,4)}S_{z_c}^{(j,3)} = O(2d)^{-k/2},
\end{equation}
and once $k =5$ this is an error term.  We will exploit this principle
in the following, beginning with \eqref{e:Pi45} for its simplest
illustration.

\begin{figure}[!h]
 \centering
 \includegraphics[scale=2.8]{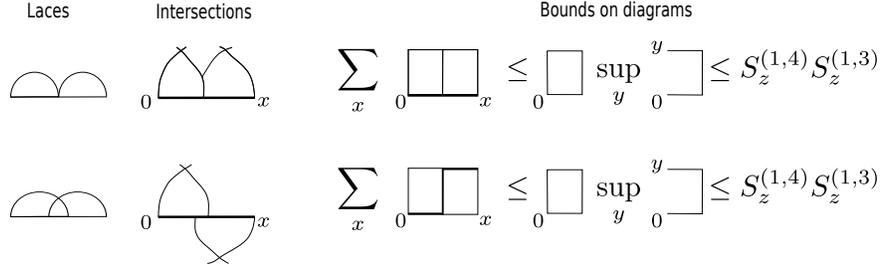}
 \caption{\label{fig:bdPi2} The two generic laces consisting of two bonds,
  schematic diagrams showing the corresponding rib intersections for a nonzero
  contribution to $\Pi^{(2)} (x)$, and diagrammatic bounds for
  the contributions to $\Pi^{(2)}(x)$.
  Diagram lines corresponding to the backbone
  joining $0$ and $x$ are shown in bold. }
\end{figure}

\smallskip \noindent \emph{Proof of \eqref{e:Pi45}.}
When $\omega$ has length at least $5$, then from
\eqref{e:Pi2ST-k} we immediately obtain
\begin{equation}
    \hat\Pi^{(2,>4)}_{z_c} \le
    2\sum_{i+j = 5} S_{z_c}^{(i,3)} S_{z_c}^{(j,4)}
    \le O(2d)^{-5/2},
\end{equation}
which gives \eqref{e:Pi45}.

\smallskip \noindent \emph{Proof of \eqref{e:Pi42}.}
When $|\omega|=2$,
there is only the lace $L=\set{01,12}$, and $\mathcal{C}(L)=\varnothing$.
Therefore,
\ecu{eq:Pi2_2}{
    \hat \Pi^{(2,2)}_{z_c}
    = \sum_{x: \|x\|_1 \in \{0,2\}}
    \sum_{\omega\in\mathcal W(x)\atop |\omega|= 2}
    \z{3}^{2}   \sum_{R_0\ni\omega(0),R_1\ni\omega(1)\atop R_2\ni\omega(2)}
    \z{3}^{|R_0| + |R_1| + |R_2|} \U_{01}\U_{12}.
}
For $x=0$,
we have $\omega=(0,s,0)$,
where $s$ is a neighbour of the origin, and Lemma~\ref{lem:QPi1} gives
\begin{align}
\label{e:Pi22}
    \Pi^{(2,2)}_{z_c}(0)
    &
    =   2d z_c^2 \left[ \frac{\ee^3}{2d} + o(2d)^{-1} \right]
    =   \frac{\ee}{(2d)^{2}} + o(2d)^{-2}.
\end{align}

When $\|x\|_1=2$,
one way to achieve $ \U_{01}\U_{12}=1$ is to have
either $R_0$ or $R_1$ contain the bond $(0,s)$,
and either $R_1$ or $R_2$ contain the bond $(s,x)$.
To obtain a lower bound from such configurations,
we treat the subribs emanating from
these bonds as independent and use inclusion-exclusion to
subtract the possible intersections among them.  This yields
\begin{align}
\label{e:Pi22-lbd}
    \sum_{x:\|x\|_1=2} \Pi^{(2,2)}_{z_c}(x)
    &\geq   4 (2d) (2d-1) \z{3}^4 g_c
    \sqb{ g_c^4 - 2Q(s)\z{3}g_c^2 }
     =    \frac{4\ee}{(2d)^2}
    +  o(2d)^{-2}.
\end{align}
If  $(0,s)$ is not present in $R_0$ and $R_1$,
or $(s,x)$ is not present in $R_1$ and $R_2$,
then an intersection among the corresponding ribs requires
at least four edges (including the step in $\omega$),
so as in Figure~\ref{fig:bdPi2} we obtain for
this case the crude upper bound $S_{z_c}^{(4,4)}S_{z_c}^{(1,3)} + S_{z_c}^{(1,4)}S_{z_c}^{(4,3)}$.
This implies
\begin{align}
    \sum_{x:\|x\|_1 =2} \Pi^{(2,2)}_{z_c}(x)
    &\leq   4 (2d) (2d-1) \z{3}^4 g_c^5
    +    S_{z_c}^{(4,4)}S_{z_c}^{(1,3)} + S_{z_c}^{(1,4)}S_{z_c}^{(4,3)}
    =    \frac{4\ee}{(2d)^2}
     +  o(2d)^{-2},
\end{align}
and, with \eqref{e:Pi22}--\eqref{e:Pi22-lbd},
this completes the proof of \eqref{e:Pi42}.

\smallskip \noindent \emph{Proof of \eqref{e:Pi43}.}
When $|\omega|=3$, there are three laces:
$L=\set{01,13}$, $L=\set{02,23}$, $L=\set{02,13}$.

\smallskip \emph{The laces $L=\set{01,13}$, $L=\set{02,23}$.}
By symmetry, both laces
give the same contribution to \eqref{eq:Pi2-bis},
so of these we only study the contribution
due to $L=\set{01,13}$ (with $\mathcal{C}(L)=\{12,23\}$), which is
\ecu{eq:Pi2_3a}{
    \sum_{x: \|x\|_1 \in \{1,3\}}
    \sum_{\omega\in\mathcal W(x)\atop |\omega|= 3}
    \z{3}^{3}
    \sum_{R_0\ni\omega(0),R_1\ni\omega(1)\atop R_2\ni\omega(2),
    R_3\ni\omega(3)}
    \z{3}^{|R_0| + |R_1| + |R_2| + |R_3|}
    \U_{01}\U_{13}\ob{ 1 + \U_{12} }\ob{ 1 + \U_{23} }.
}

\smallskip \emph{Case of $\|x\|_1=1$.}
When $\|x\|_1=1$, $\omega$ either has the form $\omega=(0,x,y,x)$
for $y$ a neighbour of $x$ (possibly $y=0$), or
$\omega=(0,s,s+y,x)$ for a neighbour $s$ of the
origin distinct from $x$ and for $y\in \{-s,x\}$.
In the first case, when $\omega=(0,x,y,x)$,
we have $\U_{13}=-1$ since $\omega(1)=x=\omega(3)$.
Using $\ob{ 1 + \U_{12} }\ob{ 1 + \U_{23} }\leq 1$,
Lemmas~\ref{lem:zcleading} and \ref{lem:Q}, we find that
this contribution to \eqref{eq:Pi2_3a} is bounded above by
\begin{align}
        &  (2d)^2  z_c^{3}g_c^2 Q(x)
        = \frac{2\ee}{(2d)^2} +  o(2d)^{-2}.
\end{align}
Also, using
$(-\U_{01})\ob{ 1 + \U_{12} }\ob{ 1 + \U_{23} }
\ge (-\U_{01}) \ob{ 1 + \U_{12} + \U_{23} }$, this
contribution to \eqref{eq:Pi2_3a}
is bounded below by
\begin{align}
    &
    (2d)^2  \z{3}^{3}
    \sqb{  g_c^2 Q(x) -  O(2d)^{-2} - Q(x)^2 }
    =   \frac{2\ee}{(2d)^2} +  o(2d)^{-2},
\end{align}
where we omit the straightforward details for
the $\U_{01}\U_{12}$ term.
This contribution gets counted twice to account also
for the lace $L=\set{02,23}$.

In the second case, when $\omega=(0,s,s+x,x)$, the
contribution to \eqref{eq:Pi2_3a}
is bounded above by
\begin{align}
      (2d)^2  \z{3}^{3} g_c^2
      \sum_{R_1\ni s,R_3\ni x}\z{3}^{|R_1| + |R_3|} (-\U_{13})
      &=  (2d)^2  \z{3}^{3} g_c^2  Q(x-s)
      =  o(2d)^{-2}
      ,
\end{align}
where we have employed the straightforward improvement
$Q(x-s) \le O(2d)^{-2}$ to the crude bound of Lemma~\ref{lem:Q},
for $\|x-s\|_1=2$.  Also, when $\omega=(0,s,0,x)$, it can be checked
that due to the factor $(1+\U_{12})$ at least 6 bonds are
required to accomplish the intersections required for $\U_{01}\U_{13}=1$.
Therefore, using the upper bound \eqref{e:Pi2ST-k}, this contribution
is at most $O(2d)^{-3}$.

\smallskip \emph{Case of $\|x\|_1=3$.}
When $\|x\|_1 =3$, it can be checked that the required intersections
cannot be accomplished without using at least 5 bonds, and we conclude
from the upper bound \eqref{e:Pi2ST-k} that the total contribution from
all such $x$ is at most $O(2d)^{-5/2}$.

\smallskip \emph{The lace $L=\set{02,13}$.}
Its contribution to \eqref{eq:Pi2-bis} is
\ecu{eq:Pi2_3b}{
    \sum_{x: \|x\|_1 \in \{1,3\}}
    \sum_{\omega\in\mathcal W(x)\atop |\omega|= 3}
    \z{3}^{3}   \sum_{R_0\ni\omega(0),\dots,\atop R_3\ni\omega(3)}
    \z{3}^{|R_0| + \dots + |R_3|} \U_{02}\U_{13}\ob{ 1 + \U_{01} }
    \ob{ 1 + \U_{12} }\ob{ 1 + \U_{23} }.
}
When $\|x\|_1=1$,
either $\omega=(0,x,0,x)$, or
$\omega=(0,s,s+y,x)$ for a neighbour $s$ of the
origin distinct from $x$ and for $y\in \{-s,x\}$.
In the first case, automatically $ \U_{0,2}\U_{13}=1$
since $\omega(0)=0=\omega(2)$
and $\omega(1)=x=\omega(3)$.
The contribution to \eqref{eq:Pi2_3b}
is bounded above by
\begin{equation}
    2d \z{3}^{3}   g_c^4
    =    \frac{\ee}{(2d)^2} +  o(2d)^{-2}.
\end{equation}
A matching lower bound is given by
\begin{align}
    2d  \z{3}^{3}
    \sum_{R_0\ni0,\dots,\atop R_3\ni x}\z{3}^{|R_0| + \dots + |R_3|}
    \ob{ 1 + \U_{01} + \U_{12} + \U_{23} }
    &\geq   2d  \z{3}^{3} \sqb{ g_c^4 - 3g_c^2 Q(x) }
    = \frac{\ee}{(2d)^2} +  o(2d)^{-2}.
\end{align}
In the second case, when $\omega=(0,s,s+y,x)$, the
contribution to \eqref{eq:Pi2_3b}
is bounded above by
\begin{align}
    2d(2d-1)  \z{3}^{3}   g_c^2
    \sum_{R_1\ni s, R_3\ni x}\z{3}^{|R_1| + |R_3|} (-\U_{13})
    &\leq 2d(2d-1)  \z{3}^{3}   g_c^2 Q(x-s)
    =  o(2d)^{-2}.
\end{align}
If $\|x\|_1=3$, the lace $L=\set{02,13}$ forces an
intersection between the ribs $R_1$ and $R_3 $,
without intersecting $R_2$ (due to $12,23 \in \mathcal{C}(L)$).
It can be argued that the contribution in
this case is $o(2d)^{-2}$.

\smallskip \noindent \emph{Proof of \eqref{e:Pi44}.}
For $|\omega|=4$,
we first consider the lace $L=\{02,24\}$ with $x=0$, which
is the only case that contributes.  After discussing this
case in detail, we will argue that all remaining contribution
belong to the error term.

For $L=\set{02,24}$ and $x=0$,
the significant walks are
$\omega=(0,s,0,s',0)$ with $s,s'$
neighbours of the origin. There are $(2d)^2$ such walks
and they have $\U_{02} \U_{24}=1$.
Treating the five ribs emanating from the walks as independent,
we obtain the upper bound
\begin{equation}
    (2d)^2\z{3}^4g_c^5
    =   \frac{\ee}{(2d)^2} + o(2d)^{-2},
\end{equation}
and it is straightforward to verify that this is also a lower bound.
This gives the formula $\ee(2d)^{-2} + o(2d)^{-2}$
that we seek for  $\hat\Pi^{(2,4)}$, so it remains to prove that
the remaining terms contribute $o(2d)^{-2}$.

The other walks of length four with $x=0$ form unit squares
containing the origin (the walk $(0,s,s+s',s,0)$ does not contribute
since it has $1+\U_{13}=0$).
If we bound $\U_{02}$ by 1, and use
the fact that there are $O (2d)^2$ such squares, then we
find that this
contribution to $\Pi^{(2,4)}_{z_c}(0)$ is bounded by (with $s,s'$ orthogonal)
\begin{equation}
    O (2d)^2 \z{3}^4g_c^3 Q(s+s')
    =   o(2d)^{-2},
\end{equation}
where $Q(s+s')$ takes into account
the intersection of $R_2$ and $R_4$ forced by
$\U_{24}$.

For the remaining case $x \neq 0$ for $L=\set{02,24}$,
and for all other laces occurring for $|\omega|=4$,
it can be checked that
there must be at least one additional bond, besides the backbone,
in order to create the intersections for a nonzero contribution.  Then
\eqref{e:Pi2ST-k} gives an upper bound
\begin{equation}
    \sum_{i+j=5} S_{z_c}^{(i,4)}S_{z_c}^{(j,3)} = O(2d)^{-5/2}
\end{equation}
for these contributions, which thus belong to the error term.
This completes the proof.
\end{proof}

\section*{Acknowledgements}

The work of YMM was supported in part by CONACYT of Mexico.
The work of GS was supported in part by NSERC of
Canada.


\begin{thebibliography}{10}

\bibitem{AB12}
G.~Aleksandrowicz and G.~Barequet.
\newblock The growth rate of high-dimensional tree polycubes.
\newblock Preprint, (2012).

\bibitem{BBR10}
R.~Barequet, G.~Barequet, and G.~Rote.
\newblock Formulae and growth rates of high-dimensional polycubes.
\newblock {\em Combinatorica}, {\bf 30}:257--275, (2010).
\newblock \href{http://dx.doi.org/10.1007/s00493-010-2448-8}{MR2728490}

\bibitem{BCHS99}
C.~Borgs, J.T. Chayes, R.~van~der Hofstad, and G.~Slade.
\newblock Mean-field lattice trees.
\newblock {\em Ann. Combinatorics}, {\bf 3}:205--221, (1999).
\newblock \href{http://www.ams.org/mathscinet-getitem?mr=MR1772346}{MR1772346}

\bibitem{CLS07}
N.~Clisby, R.~Liang, and G.~Slade.
\newblock Self-avoiding walk enumeration via the lace expansion.
\newblock {\em J.\ Phys. A: Math.\ Theor.}, {\bf 40}:10973--11017, (2007).
\newblock \href{http://www.ams.org/mathscinet-getitem?mr=MR2396212}{MR2396212}

\bibitem{DS97}
E.~Derbez and G.~Slade.
\newblock Lattice trees and super-{Brownian} motion.
\newblock {\em Canad.\ Math.\ Bull.}, {\bf 40}:19--38, (1997).
\newblock \href{http://dx.doi.org/10.4153/CMB-1997-003-8}{MR1443722}

\bibitem{DS98}
E.~Derbez and G.~Slade.
\newblock The scaling limit of lattice trees in high dimensions.
\newblock {\em Commun.\ Math.\ Phys.}, {\bf 193}:69--104, (1998).
\newblock \href{http://www.ams.org/mathscinet-getitem?mr=MR1620301}{MR1620301}

\bibitem{FG64}
M.E. Fisher and D.S. Gaunt.
\newblock {Ising} model and self-avoiding walks on hypercubical lattices and
  ``high-density'' expansions.
\newblock {\em Phys. Rev.}, {\bf 133}:A224--A239, (1964).

\bibitem{GP00}
D.S. Gaunt and P.J. Peard.
\newblock $1/d$-expansions for the free energy of weakly embedded site animal
  models of branched polymers.
\newblock {\em J. Phys. A: Math. Gen.}, {\bf 33}:7515--7539, (2000).
\newblock \href{http://www.ams.org/mathscinet-getitem?mr=MR1802107}{MR1802107}

\bibitem{GPSW94}
D.S. Gaunt, P.J. Peard, C.E. Soteros, and S.G. Whittington.
\newblock Relationships between growth constants for animals and trees.
\newblock {\em J. Phys. A: Math. Gen.}, {\bf 27}:7343--7351, (1994).
\newblock \href{http://stacks.iop.org/0305-4470/27/7343}{MR1310273}

\bibitem{GR78}
D.S. Gaunt and H.~Ruskin.
\newblock Bond percolation processes in $d$ dimensions.
\newblock {\em J. Phys. A: Math. Gen.}, {\bf 11}:1369--1380, (1978).

\bibitem{Grah10}
B.T. Graham.
\newblock Borel-type bounds for the self-avoiding walk connective constant.
\newblock {\em J. Phys. A: Math. Theor.}, {\bf 43}:235001, (2010).
\newblock \href{http://www.ams.org/mathscinet-getitem?mr=MR2646672}{MR2646672}

\bibitem{Hara08}
T.~Hara.
\newblock Decay of correlations in nearest-neighbor self-avoiding walk,
  percolation, lattice trees and animals.
\newblock {\em Ann. Probab.}, {\bf 36}:530--593, (2008).
\newblock \href{http://www.ams.org/mathscinet-getitem?mr=MR2393990}{MR2393990}

\bibitem{HS90b}
T.~Hara and G.~Slade.
\newblock On the upper critical dimension of lattice trees and lattice animals.
\newblock {\em J. Stat. Phys.}, {\bf 59}:1469--1510, (1990).
\newblock \href{http://www.ams.org/mathscinet-getitem?mr=MR1063208}{MR1063208}

\bibitem{HS92c}
T.~Hara and G.~Slade.
\newblock The number and size of branched polymers in high dimensions.
\newblock {\em J. Stat. Phys.}, {\bf 67}:1009--1038, (1992).
\newblock \href{http://dx.doi.org/10.1007/BF01049008}{MR1170084}

\bibitem{HS95}
T.~Hara and G.~Slade.
\newblock The self-avoiding-walk and percolation critical points in high
  dimensions.
\newblock {\em Combin. Probab. Comput.}, {\bf 4}:197--215, (1995).
\newblock \href{http://www.ams.org/mathscinet-getitem?mr=MR1356575}{MR1356575}

\bibitem{Harr82}
A.B. Harris.
\newblock Renormalized $(1/\sigma)$ expansion for lattice animals and
  localization.
\newblock {\em Phys. Rev. B}, {\bf 26}:337--366, (1982).
\newblock \href{http://www.ams.org/mathscinet-getitem?mr=MR0668821}{MR0668821}

\bibitem{HS05y}
R.~van~der Hofstad and A.~Sakai.
\newblock Critical points for spread-out self-avoiding walk, percolation and
  the contact process.
\newblock {\em Probab. Theory Related Fields}, {\bf 132}:438--470, (2005).
\newblock \href{http://www.ams.org/mathscinet-getitem?mr=MR2197108}{MR2197108}

\bibitem{HS05}
R.~van~der Hofstad and G.~Slade.
\newblock Asymptotic expansions in $n^{-1}$ for percolation critical values on
  the $n$-cube and ${Z}^n$.
\newblock {\em Random Struct.\ Alg.}, {\bf 27}:331--357, (2005).
\newblock \href{http://dx.doi.org/10.1002/rsa.20074}{MR2162602}

\bibitem{HS06}
R.~van~der Hofstad and G.~Slade.
\newblock Expansion in $n^{-1}$ for percolation critical values on the $n$-cube
  and ${Z}^n$: the first three terms.
\newblock {\em Combin.\ Probab.\ Comput.}, {\bf 15}:695--713, (2006).
\newblock \href{http://www.ams.org/mathscinet-getitem?mr=MR2248322}{MR2248322}

\bibitem{Holm08}
M.~Holmes.
\newblock Convergence of lattice trees to super-{B}rownian motion above the
  critical dimension.
\newblock {\em Electr.\ J.\ Probab.}, {\bf 13}:671--755, (2008).
\newblock \href{http://www.ams.org/mathscinet-getitem?mr=MR2399294}{MR2399294}

\bibitem{Jans00}
E.~J. Janse~van Rensburg.
\newblock {\em The Statistical Mechanics of Interacting Walks, Polygons,
  Animals and Vesicles}.
\newblock Oxford University Press, Oxford, (2000).
\newblock \href{http://www.ams.org/mathscinet-getitem?mr=MR1858028}{MR1858028}

\bibitem{Klar67}
D.A. Klarner.
\newblock Cell growth problems.
\newblock {\em Canad. J. Math.}, {\bf 19}:851--863, (1967).
\newblock \href{http://www.ams.org/mathscinet-getitem?mr=MR0214489}{MR0214489}

\bibitem{Klei81}
D.J. Klein.
\newblock Rigorous results for branched polymer models with excluded volume.
\newblock {\em J. Chem. Phys.}, {\bf 75}:5186--5189, (1981).
\newblock {\em J. Chem. Phys.}, {\bf 75}:5186--5189, (1981).

\bibitem{Madr99}
N.~Madras.
\newblock A pattern theorem for lattice clusters.
\newblock {\em Ann. Combinatorics}, {\bf 3}:357--384, (1999).
\newblock \href{http://dx.doi.org/10.1007/BF01608793}{MR1772355}

\bibitem{Meji12}
Y.~Mej\'ia~Miranda.
\newblock {\em The critical points of lattice trees and lattice animals in high
  dimensions}.
\newblock PhD thesis, University of British Columbia, (2012).
\newblock \href{https://circle.ubc.ca/handle/2429/43087}{43087}


\bibitem{MS11}
Y.~Mej\'ia~Miranda and G.~Slade.
\newblock The growth constants of lattice trees and lattice animals in high
  dimensions.
\newblock {\em Elect. Comm. Probab.}, {\bf 16}:129--136, (2011).
\newblock \href{http://dx.doi.org/10.1214/ECP.v16-1612}{MR2775351}

\bibitem{PG95}
P.J. Peard and D.S. Gaunt.
\newblock $1/d$-expansions for the free energy of lattice animal models of a
  self-interacting branched polymer.
\newblock {\em J. Phys. A: Math. Gen.}, {\bf 28}:6109--6124, (1995).
\newblock \href{http://www.ams.org/mathscinet-getitem?mr=MR1364786}{MR1364786}

\bibitem{Penr93}
M.D. Penrose.
\newblock On the spread-out limit for bond and continuum percolation.
\newblock {\em Ann.\ Appl.\ Probab.}, {\bf 3}:253--276, (1992).
\newblock \href{http://www.ams.org/mathscinet-getitem?mr=MR1202526}{MR1202526}

\bibitem{Penr94}
M.D. Penrose.
\newblock Self-avoiding walks and trees in spread-out lattices.
\newblock {\em J. Stat. Phys.}, {\bf 77}:3--15, (1994).
\newblock \href{http://www.ams.org/mathscinet-getitem?mr=MR1300525}{MR1300525}

\bibitem{Slad06}
G.~Slade.
\newblock {\em The Lace Expansion and its Applications.}
\newblock Springer, Berlin, (2006).
\newblock Lecture Notes in Mathematics Vol. 1879. Ecole d'Et\'{e} de
  Probabilit\'{e}s de Saint--Flour XXXIV--2004.
\newblock \href{http://www.ams.org/mathscinet-getitem?mr=MR2239599}{MR2239599}

\end{thebibliography}
\end{document}